\newtheorem{theorem}{Theorem}[section]
\newtheorem{proposition}[theorem]{Proposition}
\newtheorem{lemma}[theorem]{Lemma}
\newtheorem{corollary}[theorem]{Corollary}
\theoremstyle{definition}
\newtheorem*{theorem*}{Theorem}
\newtheorem*{proposition*}{Proposition}
\newtheorem*{lemma*}{Lemma}
\theoremstyle{remark}
\newtheorem*{remark}{Remark}
\numberwithin{equation}{section}
\newcommand\abs[1]{\left|#1\right|}
\newcommand\ceil[1]{\left\lceil#1\right\rceil}
\newcommand\floor[1]{\left\lfloor#1\right\rfloor}
\newcommand{\QQ}{\mathbb Q}
\newcommand{\NN}{\mathbb N}
\newcommand{\RR}{\mathbb R}
\newcommand{\on}{\operatorname}
\newcommand{\rank}{\on{rank}}
\newcommand{\Cl}{\on{Cl}}
\newcommand*{\defeq}{\mathrel{\rlap{%
                     \raisebox{0.3ex}{$\m@th\cdot$}}%
                     \raisebox{-0.3ex}{$\m@th\cdot$}}%
                     =}
\newcommand{\eps}{\varepsilon}
\newcommand\half{\frac12}
\newcommand{\PP}{\mathbb P}
\author[Yang Liu]{Yang Liu}
\address{Department of Mathematics, Massachusetts Institute of Technology, \mbox{Cambridge, MA 02139}}
\email{\href{mailto:yliu@mit.edu}{{\tt yliu97@mit.edu}}}
\author[Peter S. Park]{Peter S. Park}
\address{Department of Mathematics, Princeton University, \mbox{Princeton, NJ 08544}}
\email{\href{mailto:pspark@math.princeton.edu}{{\tt pspark@math.princeton.edu}}}
\author[Zhuo Qun Song]{Zhuo Qun Song}
\address{Department of Mathematics, Princeton University, \mbox{Princeton, NJ 08544}}
\email{\href{mailto:zsong@princeton.edu}{{\tt zsong@princeton.edu}}}
\begin{document}

\title[Bounded Gaps Between Products of Distinct Primes]{Bounded Gaps Between Products of Distinct Primes}
\date{\today}

\begin{abstract}
Let $r \ge 2$ be an integer. We adapt the Maynard--Tao sieve to produce the asymptotically best-known bounded gaps between products of $r$ distinct primes. Our result applies to positive-density subsets of the primes that satisfy certain equidistribution conditions. This improves on the work of Thorne and Sono.
\end{abstract}

\maketitle

\section{Introduction}\label{sec:introduction}
One of the most famous conjectures in mathematics is the twin prime conjecture, which states that
\[
\liminf_{n \to \infty} (p_{n+1} - p_n) = 2,
\]
where $p_i$ denotes the $i$-th prime. The numerical evidence for this conjecture is striking because the average value of $p_{n+1} - p_n$ is known to grow arbitrarily large; specifically, the prime number theorem implies that the difference is $\sim \log p_n$ on average. One of the first significant advances toward the twin prime conjecture is due to Chen Jingrun \cite{chen}, who proved in 1973 that there are infinitely many primes $p$ such that $p+2$ is a product of at most two primes.

In another direction, Goldston, Graham, Pintz, and Y{\i}ld{\i}r{\i}m  (whom we will refer to as ``GGPY'') considered the problem of proving a finite bound for gaps between primes. In 2005, Goldston, Pintz, and Y{\i}ld{\i}r{\i}m~\cite{gpy} used the Selberg sieve to prove that
\begin{equation}\label{gpyresult}
\liminf_{n \to \infty} \frac{p_{n+1} - p_n}{\log p_n} = 0.
\end{equation}
GGPY~\cite{ggpy} gave an alternative proof of \eqref{gpyresult} that also used a variant of Selberg's sieve weights. These methods rely on the Bombieri--Vinogradov theorem, an equidistribution result stating that 
\begin{equation}\label{uno}
\sum_{q \leq x^\theta} \max_{(a,q) = 1} \abs{\pi(x;q,a) - \frac{1}{\phi(q)}\pi(x)} \ll \frac{x}{(\log x)^A}
\end{equation}
for any $A>0$ and $\theta < \half$, where
\[
\pi(x) \defeq \#\{p \text{ prime} \colon p \leq x\} \hspace{12pt} \text{ and } \hspace{12pt} \pi(x;q,a) \defeq \#\{p \text{ prime} \colon p \leq x, p \equiv a \pmod q\}.
\]
We say that $\theta>0$ is a \emph{level of distribution} if \eqref{uno} holds for any $A>0$. Also, these methods conditionally prove bounded gaps between the primes, i.e, a finite bound for
\[
\liminf_{n \to \infty}(p_{n+1} - p_n),
\]
under the assumption that there exists a level of distribution $\theta > \half$.

The GGPY approach can be applied to gaps between products of primes. Let $\mathbb{P}$ be the set of all primes, and consider a subset $\mathcal{P}\subseteq\mathbb{P}$. Let $E_{r}(\mathcal P)$ denote the set of positive integers that are products of $r$ distinct primes in $\mathcal P$. GGPY \cite{gpylemma} proved that
\[
\liminf_{n \to \infty} (q_{n+1} - q_n) \le 6,
\]
where $q_i$ denotes the $i$-th number in $E_2(\PP)$. Subsequently, Thorne \cite{thorne} extended their method to prove the existence of bounded gaps between any number of consecutive $E_r(\PP)$ numbers for any $r \ge 2$. In fact, Thorne's result extends to $E_r(\mathcal P)$ for subsets $\mathcal P$ of the primes satisfying certain equidistribution conditions, thus yielding a number of intriguing consequences in the context of multiplicative number theory. For instance, for an elliptic curve $E/\QQ$ given by the equation 
\[
E \colon y^2=x^3+ax^2+bx+c
\]
and a fundamental discriminant $D$, let
\[
E(D) \colon Dy^2=x^3+ax^2+bx+c
\]
denote the $D$-quadratic twist of $E$. Thorne proved in \cite[Theorem 1.2]{thorne} that if $E$ has no 2-torsion, then there are bounded gaps between squarefree fundamental discriminants for which $E(D)$ has Mordell--Weil rank $0$ and its Hasse--Weil $L$-function satisfies $L(1,E(D)) \neq 0$. Another corollary, \cite[Corollary 1.3]{thorne}, states that there are bounded gaps between $E_2(\PP)$ numbers $pq$ such that the class group $\Cl(\sqrt{-pq})$ contains an element of order $4$. In 2014, Chung and Li \cite{chungli}
proved an analogous and quantitatively stronger bound for the size of the gaps in the above results for squarefree numbers whose prime divisors are all in $\mathcal{P}$ instead than $E_r(\mathcal{P})$ numbers.

In 2013, Zhang \cite{zhang} unconditionally proved that 
\[
\liminf_{n \to \infty} (p_{n+1} - p_n) < 7 \cdot 10^7
\]
by finding a suitable modification of \eqref{uno} that holds for $\theta > \half$. Zhang's result, while groundbreaking, cannot be easily extended to subsets of primes. Later that year, Maynard \cite{maynard} (and Tao independently) developed a further refinement of the Selberg sieve to show that 
\[
\liminf_{n \to \infty} (p_{n+m} - p_n) \ll m^3 \exp(4m).
\]
Moreover, this refinement produces bounded gaps between primes with \emph{any} level of distribution $\theta > 0$. This suggests that the Maynard--Tao sieve can be adapted to study gaps between primes in special subsets. In particular, Thorner \cite{thorner} extended the methods in \cite{maynard} to show that there exist bounded gaps between primes in Chebotarev sets. There are a number of interesting number-theoretic consequences for ranks of elliptic curves, Fourier coefficients of modular forms, and primes represented by binary quadratic forms.

Sono \cite{sono} used the Maynard--Tao sieve to improve the bound given by \cite{gpylemma} for the gaps between products of two primes. Specifically, he showed that for every $\eps > 0$, 
\[
\liminf_{n \to \infty} (q_{n+m} - q_{n}) \ll \exp\left(\frac{(4+\eps)m}{3\log m}\right)
\]
holds for sufficiently large $m$ in terms of $\eps$. Moreover, Neshime \cite[Remark 7.1]{sono} was able to modify Sono's methods to show an even stronger bound of
\begin{equation}\label{sonostudent}
\liminf_{n \to \infty} (q_{n+m} - q_{n}) \ll \sqrt{m}\exp\left(4\sqrt{m}\right).
\end{equation}

In this paper, we adapt the arguments in \cite{maynard} and \cite{sono} to prove bounded gaps between consecutive $E_r(\mathcal P)$ numbers for any $r \ge 2$ and infinite subset $\mathcal P \subseteq \PP$ satisfying the  equidistribution conditions defined below. Let 
\[
\pi_{\mathcal{P}}(x) \defeq \sum_{ \substack{ p \leq x \\ p \in \mathcal{P}}} 1, \hspace{40pt} \text{and} \hspace{40pt} \pi_{\mathcal{P}}(x;q,a) \defeq \sum_{ \substack{ p \leq x \\ p \in \mathcal{P} \\ p \equiv a \pmod q}} 1.
\]
Let $\delta = \delta (\mathcal{P})$ denote the density of $\mathcal{P}$ in the set of all primes, i.e,
\begin{equation}\label{2.1}
\pi_{\mathcal{P}}(x) = (1 + o(1)) \frac{\delta x}{\log x}.
\end{equation}
For $\mathcal{P}$ such that $\delta (\mathcal{P}) > 0$, we say that $\mathcal{P}$ satisfies a \emph{Siegel--Walfisz condition} if there exists a squarefree positive integer $B = B(\mathcal{P})$ such that for all $(q, B) = 1$ and $(a, q) = 1$,
\begin{equation}
\left |\pi_{\mathcal{P}}(x; q, a) - \frac{1}{\phi(q)}\pi_{\mathcal{P}}(x) \right| \ll \frac{x}{(\log x)^A}\label{2.2}
\end{equation}
holds for any $A>0$.
Finally, for $\mathcal{P}$ that satisfies a Siegel--Walfisz condition, we say that $\mathcal{P}$ has \emph{level of distribution $\theta > 0$} if 
\begin{equation}
\sum_{\substack{q < x^{\theta} \\ (q, B) = 1}} \max_{(a, q) = 1} \left |\pi_{\mathcal{P}}(x; q, a) - \frac{1}{\phi(q)}\pi_{\mathcal{P}}(x) \right | \ll \frac{x}{(\log x)^A}
\label{2.3}
\end{equation}
holds for any $A > 0$, with the implied constant depending on $A$. For the purposes of our paper, assume that $\theta < \frac{1}{2}$ throughout.

Define a set of linear forms $n+h_1, \ldots, n+h_k$ to be \emph{admissible} if for every prime $\ell$, there exists an integer $n_\ell$ such that 
\[
\ell \nmid \prod_{i=1}^k (n_\ell+h_i).
\]
We now state our main theorem in the context of admissible sets of linear forms $n+h_1, \ldots, n+h_k$  that are guaranteed to contain at least $m$ $E_r(\mathcal P)$ numbers, for infinitely many $n$. 

\begin{theorem}\label{thm:sellmain} Suppose that a subset $\mathcal{P}$ of the primes has positive density $\delta = \delta(\mathcal P)$ in the set of all primes, satisfies a Siegel--Walfisz condition as in \eqref{2.2}, and has a positive level of distribution as in \eqref{2.3}, with $B = B(\mathcal P)$ defined accordingly. There exists a constant $C(r,m,\mathcal P)$, depending only on $r, m,$ and $\mathcal P$, such that for any admissible set of linear forms $\{n + h_1, \dots, n + h_k\}$ satisfying $k > C(r, m, \mathcal{P})$, it holds for  infinitely many positive integers $n$ that at least $m$ of $n + h_1, \dots, n + h_k$ are $E_r(\mathcal{P})$ numbers.
\end{theorem}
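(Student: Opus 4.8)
The plan is to run the Maynard--Tao multidimensional sieve, with the new content being how membership in $E_r(\mathcal{P})$ is detected and what variational problem this produces. Fix a large admissible tuple $\{n+h_1,\dots,n+h_k\}$, a slowly growing $D_0$, $W=\prod_{p\le D_0,\ p\nmid B}p$, and (by admissibility) a residue class $v_0$ with $(v_0+h_i,WB)=1$ for all $i$. For a fixed smooth $F$ supported on $\mathcal{R}_k=\{x_i\ge 0,\ \sum x_i\le 1\}$ and a sieve level $R=N^{c}$ with $c>0$ to be chosen, take the usual weights $\lambda_{\mathbf d}$ (supported on squarefree $d_1\cdots d_k\le R$ coprime to $WB$) built from $F$, set $w_n=\bigl(\sum_{d_i\mid n+h_i\ \forall i}\lambda_{\mathbf d}\bigr)^2$, and consider $S_1=\sum_{N\le n<2N,\ n\equiv v_0\ (W)}w_n$ and $S_2=\sum_{i=1}^k\sum_{N\le n<2N,\ n\equiv v_0\ (W),\ n+h_i\in E_r(\mathcal{P})}w_n$. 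As usual it suffices to show $S_2>(m-1)S_1$ for all large $N$: then some $n\in[N,2N)$ has at least $m$ of the $n+h_i$ in $E_r(\mathcal{P})$, and running $N$ along a sparse sequence produces infinitely many such $n$. A crucial simplification, legitimate because we only need a lower bound, is to count in $S_2$ only those $n$ with $n+h_i=q_1\cdots q_{r-1}\,p$, where $q_1<\dots<q_{r-1}$ are distinct primes of $\mathcal{P}$ in a fixed range $(N^{\alpha},N^{\beta}]$ and $p\in\mathcal{P}$ is a large prime with $p\sim N/(q_1\cdots q_{r-1})$; here $0<\alpha<\beta$ are small constants chosen so that $(r-1)\beta$ plus twice the sieve exponent stays below $\theta$. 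Taking the $q_j$ to be genuine powers of $N$ (rather than bounded) is essential: otherwise the inner sieve sum over divisors of $q_1\cdots q_{r-1}$ is damped by a factor $\asymp\phi(q_1\cdots q_{r-1})^{-2}$ and the argument fails.

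The computation of $S_1$ is identical to Maynard's, giving $S_1=(1+o(1))\,\tfrac{\phi(W)^k}{W^{k+1}}N(\log R)^kI_k(F)$ with $I_k(F)=\int_{[0,1]^k}F^2$. For $S_2$ one fixes $i$, opens the square, and swaps summation; since $p>R$ the condition $[d_i,e_i]\mid n+h_i$ becomes $[d_i,e_i]\mid q_1\cdots q_{r-1}$, so for fixed $(q_1,\dots,q_{r-1})$ the prime $p$ runs over a single residue class modulo $q:=W\prod_{j\ne i}[d_j,e_j]$. The Siegel--Walfisz condition \eqref{2.2} handles the $W$-aspect and small moduli, the level of distribution \eqref{2.3} handles the sum over $q$ (valid because $q\cdot q_1\cdots q_{r-1}<N^{\theta}$), and after summing the errors over all $q_j,\mathbf d,\mathbf e$ one is left with the main term $\tfrac1{\phi(q)}\bigl(\pi_{\mathcal{P}}(2N/M)-\pi_{\mathcal{P}}(N/M)\bigr)$, $M=q_1\cdots q_{r-1}$. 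Performing the sum over $d_i,e_i\mid M$ against the $F$-weights, then summing over the $q_j$ using \eqref{2.1} and the Mertens estimate $\sum_{q\in\mathcal{P},\ q\le x}1/q=\delta\log\log x+O(1)$, one should arrive at
\[
S_2=(1+o(1))\,\delta^{r}\,\kappa_{r,\alpha,\beta,c}\,\frac{\phi(W)^k}{W^{k+1}}\,\frac{N(\log R)^{k+1}}{\log N}\sum_{i=1}^k\mathcal{J}_k^{(i)}(F),
\]
with $\kappa_{r,\alpha,\beta,c}>0$ explicit and $\mathcal{J}_k^{(i)}(F)$ a positive quadratic functional playing the role of Maynard's $J_k^{(i)}(F)=\int\bigl(\int_0^1F\,dx_i\bigr)^2$ — but with $\int_0^1F\,dx_i$ replaced by a weighted average over $t$ of truncated integrals $\int_0^tF\,dx_i$, $t$ ranging over a short interval fixed by $\alpha,\beta,c$; this reflects the extra average over the auxiliary primes (and for $r=1$ it degenerates back to Maynard, with $d_i=e_i=1$ forced).

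It then remains to optimize: $S_2/S_1=(1+o(1))\,\delta^r\kappa_{r,\alpha,\beta,c}\,c\,\sum_i\mathcal{J}_k^{(i)}(F)\big/I_k(F)$, so I would show $\sup_F\sum_i\mathcal{J}_k^{(i)}(F)/I_k(F)\to\infty$ as $k\to\infty$. This follows by adapting the Maynard/Polymath construction of near-optimal $F$: replacing $\int_0^1F\,dx_i$ by truncated integrals is only a bounded handicap once $F$ is taken to concentrate suitably toward the faces $x_i=0$, and one still gets divergence — at least a fixed positive multiple of $\log k$, with a crude product test function already giving $\gg k$. Consequently there is a constant $C(r,m,\mathcal{P})$ — depending on $r$, on $m$, and on $\mathcal{P}$ through $\delta$, the level of distribution $\theta$, and $B$ — such that for $k>C(r,m,\mathcal{P})$ one may fix $F$ making $\delta^r\kappa_{r,\alpha,\beta,c}\,c\,\sum_i\mathcal{J}_k^{(i)}(F)/I_k(F)>m-1$, hence $S_2>(m-1)S_1$ for all large $N$, which by the first paragraph yields the theorem.

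The main obstacle I anticipate is the evaluation of $S_2$ in the second paragraph: getting the prime-in-arithmetic-progressions asymptotic uniformly over the auxiliary primes $q_j$ and the sieve variables with all errors absorbed into \eqref{2.2}--\eqref{2.3}, and then the combinatorial bookkeeping (absent when $r=1$) by which the sum over divisors of $q_1\cdots q_{r-1}$ collapses to the functional $\mathcal{J}_k^{(i)}$ together with the clean constant $\delta^r\kappa_{r,\alpha,\beta,c}$. A secondary obstacle is the variational statement $\sup_F\sum_i\mathcal{J}_k^{(i)}(F)/I_k(F)\to\infty$: one must check that the truncation built into $\mathcal{J}_k^{(i)}$, forced by the requirement that the auxiliary primes be powers of $N$ and the sieve weights not be over-spiky, does not destroy the Maynard-type divergence.
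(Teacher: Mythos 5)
Your proposal follows the paper's strategy in all but one component: the same Maynard--Tao weights, the same lower-bound detection of $E_r(\mathcal P)$ numbers as a product of $r-1$ ``medium'' primes (genuine powers of $N$) times one large prime, the same Mertens-type sum over the auxiliary primes yielding the $\delta^r/(r-1)!$-type constant, and the same perturbation of Maynard's variational problem. The genuine difference is the equidistribution step. You fix $M=q_1\cdots q_{r-1}$ first so that the only free variable is the single large prime $p\sim N/M$, and you then need only the hypothesis \eqref{2.3} applied at scale $N/M$ for each $M$, summing the errors over $M$ at a cost of $\sum_M 1/M\ll(\log\log N)^{r-1}$; this is valid since your constraint $2c+(r-1)\beta<\theta$ forces the moduli $\le R^2W$ to be at most $(N/M)^{\theta}$. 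The paper instead divides $n+h_m$ by $[d_m,e_m]$ first, leaving an $E_h(\mathcal P)$ number for various $h$, and therefore must prove a Bombieri--Vinogradov theorem for products of $h$ primes (Theorem \ref{bv}, via the Bombieri--Friedlander--Iwaniec convolution estimate); your route dispenses with Section \ref{sec:bv} entirely, at the price of performing the decomposition over auxiliary primes before rather than after the equidistribution step, while the paper's route yields a reusable BV statement. On the variational side, your ``truncated integrals'' functional is, written out, the signed M\"obius sum $\sum_{d\mid M}\mu(d)\int_{\log d/\log R}^{1}F\,dx_i$, and the precise form of your ``concentrate $F$ toward the faces'' remedy is exactly the paper's: support $F$ on $\{x_i\le 2\eta/\theta\}$ (equivalently $x_i\le\alpha/c$ in your notation), so that every $d>1$ term vanishes identically because such $d$ satisfy $d\ge N^{\alpha}$, and the functional collapses to Maynard's $J_k^{(m)}(F)$ on the nose; since Maynard's near-optimal $F$ is supported on $x_i\le T/k=O(1/\log k)$, this restriction costs nothing for large $k$. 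One slip: a product test function gives $\sum_i J_k^{(i)}(F)/I_k(F)\gg\log k$, not $\gg k$ (the supremum of this ratio over all admissible $F$ is $O(\log k)$), but only the divergence is needed and your parallel claim of ``a fixed positive multiple of $\log k$'' is the correct one.
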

\noindent Note that the explicit value for the constant $C(r,m,\mathcal P)$ is given in \eqref{accboundforms}.

From Theorem~\ref{thm:sellmain}, we obtain bounded gaps for an arbitrary number of $E_r(\mathcal P)$ numbers.

\begin{corollary}\label{col:show} Let $a_{i}$ denote the $i$-th $E_r(\mathcal P)$ number. Then, there exists a constant $C(r,m,\mathcal P)$, depending only on $r, m,$ and $\mathcal P$, such that 
\[
\liminf_{n \to \infty} (a_{n+m} - a_{n}) < C(r,m,\mathcal P).
\]
\end{corollary}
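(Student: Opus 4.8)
The plan is to deduce Corollary~\ref{col:show} from Theorem~\ref{thm:sellmain} in the usual way: exhibit one admissible $k$-tuple, with $k$ as in the theorem, whose diameter is bounded in terms of $r, m,$ and $\mathcal{P}$, and then track indices.

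\textbf{Step 1 (reduction).} Fix $r \ge 2$ and $m \ge 1$ and set $k = \ceil{C(r, m+1, \mathcal{P})} + 1$, so $k > C(r, m+1, \mathcal{P})$. I would first reduce to exhibiting an admissible set $\{n+h_1, \dots, n+h_k\}$ with $0 = h_1 < h_2 < \dots < h_k$: given such a set, Theorem~\ref{thm:sellmain} (applied with $m+1$ in place of $m$) produces infinitely many $n$ for which at least $m+1$ of $n+h_1, \dots, n+h_k$ are $E_r(\mathcal{P})$ numbers. These $m+1$ distinct $E_r(\mathcal{P})$ numbers all lie in $[n, n+h_k]$; if $a_{j(n)}$ is the least of them, then $a_{j(n)+m}$ is also among them, so $a_{j(n)+m} - a_{j(n)} \le h_k$. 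Since $a_{j(n)} \ge n \to \infty$ forces $j(n) \to \infty$, we get $\liminf_{i\to\infty}(a_{i+m} - a_i) \le h_k$, and it suffices to take $C(r,m,\mathcal{P}) \defeq h_k + 1$.

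\textbf{Step 2 (an explicit admissible set).} I would take $h_i = p_{\pi(k)+i} - p_{\pi(k)+1}$ for $1 \le i \le k$, i.e., the first $k$ primes exceeding $k$, translated so that $h_1 = 0$. To verify admissibility, fix a prime $\ell$. If $\ell \le k$, choose $n_\ell = p_{\pi(k)+1}$, so that $n_\ell + h_i = p_{\pi(k)+i}$ is a prime larger than $\ell$, hence $\ell \nmid \prod_i (n_\ell + h_i)$. If $\ell > k$, then $h_1, \dots, h_k$ occupy at most $k < \ell$ residue classes modulo $\ell$, so some class $c$ is missed by $\{-h_1, \dots, -h_k\} \pmod \ell$; take $n_\ell \equiv c \pmod \ell$. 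This proves the tuple is admissible.

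\textbf{Step 3 (diameter bound and conclusion).} By the prime number theorem $p_j \sim j\log j$, so $h_k \le p_{\pi(k)+k} \ll k \log k$ as $k \to \infty$; combined with the explicit value of $C(r, m+1, \mathcal{P})$ recorded in \eqref{accboundforms}, this yields an explicit admissible choice of constant in Step 1 and completes the proof. The main point is that there is no real obstacle here — all of the analytic content sits in Theorem~\ref{thm:sellmain}. The only places warranting a moment's care are the off-by-one bookkeeping in Step 1 (using $m+1$ forms so as to bound $a_{i+m}-a_i$ rather than $a_{i+m-1}-a_i$) and the verification that the infinitely many valid $n$ yield infinitely many distinct indices $j(n)$, which is exactly what licenses passing to a $\liminf$.
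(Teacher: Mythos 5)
Your proposal is correct and follows essentially the same route as the paper: apply Theorem~\ref{thm:sellmain} with $m+1$ forms to the admissible tuple given by the first $k$ primes exceeding $k$, whose diameter is $\ll k\log k$, exactly as in the proof of Corollary~\ref{cor:main}. The only cosmetic imprecision is the phrase ``$a_{j(n)+m}$ is also among them'' --- strictly it need only be bounded above by the largest of the $m+1$ numbers found --- but the resulting inequality $a_{j(n)+m}-a_{j(n)}\le h_k$ is still valid.
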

\noindent The explicit value for the constant $C(r,m,\mathcal P)$ of this context is given in \eqref{accbound}.

When $r$ is fixed and $m$ is large, our result improves on the work of \cite{thorne}, thus yielding improved effective constants in the contexts described in \cite[Section 1]{thorne}.

\begin{corollary}\label{corollary0}
Let $K/\QQ$ be a Galois extension, and $r \ge 2$ be an integer. Then, there exists an effectively computable constant $C(r,m,K)$ such that there
are infinitely many nonconjugate $m$-tuples of ideals $\mathfrak a_1,\ldots,\mathfrak a_m$ whose norms are $E_r(\PP)$ numbers that are simultaneously contained in an interval of length $C(r,m,K)$.
\end{corollary}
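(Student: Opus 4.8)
The plan is to deduce this from Theorem~\ref{thm:sellmain} applied to the Chebotarev set
\[
\mathcal P = \mathcal P(K) \defeq \{p \text{ prime} : p \text{ splits completely in } K\}.
\]
If $p \in \mathcal P(K)$, then every prime ideal $\pp \subset \OO_K$ above $p$ has residue degree $1$, so $\Norm\pp = p$. Hence, given an $E_r(\mathcal P)$ number $N = p_1\cdots p_r$ with the $p_i$ distinct, any choice of primes $\pp_i \subset \OO_K$ above $p_i$ yields a squarefree integral ideal $\aa = \pp_1\cdots\pp_r$ with $\Norm\aa = p_1\cdots p_r = N \in E_r(\PP)$. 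So it is enough to produce, for infinitely many $n$, an $m$-element set of $E_r(\mathcal P)$ numbers lying in an interval of bounded length, and then lift each number to an ideal in this way; as $n \to \infty$ the resulting ideal norms grow, so we obtain infinitely many $m$-tuples of ideals with pairwise distinct vectors of norms. Since the norm is $\mathrm{Gal}(K/\QQ)$-invariant, two componentwise-conjugate $m$-tuples have equal norm vectors, so these $m$-tuples are pairwise nonconjugate.

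It remains to verify that $\mathcal P(K)$ satisfies the hypotheses of Theorem~\ref{thm:sellmain}. The Chebotarev density theorem gives $\delta(\mathcal P) = 1/[K:\QQ] > 0$, which is \eqref{2.1}. Let $B = B(\mathcal P)$ be the radical of the discriminant of $K$, i.e., the product of the rational primes ramifying in $K$. For $(q,B) = 1$ one has $K \cap \QQ(\zeta_q) = \QQ$, hence $\mathrm{Gal}(K(\zeta_q)/\QQ) \cong \mathrm{Gal}(K/\QQ) \times (\ZZ/q\ZZ)^\times$, and for $p \nmid qB$ the conditions $p \in \mathcal P(K)$ and $p \equiv a \pmod q$ together say exactly that the Frobenius class of $p$ in this product group equals $(\mathrm{id}, a)$. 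Thus $\pi_{\mathcal P}(x;q,a)$ is, up to $O(1)$, a Chebotarev counting function for $K(\zeta_q)/\QQ$, and \eqref{2.2} and \eqref{2.3} follow from an effective Chebotarev density theorem in progressions together with a Bombieri--Vinogradov-type theorem for such compositum fields --- precisely the equidistribution inputs assembled in Thorner's work on bounded gaps between primes in Chebotarev sets \cite{thorner}, which in particular provides a positive level of distribution $\theta = \theta(K) \in (0, \tfrac12)$.

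Granting this, Theorem~\ref{thm:sellmain} applies with $\mathcal P = \mathcal P(K)$: taking any admissible $k$-tuple $\{n + h_1, \dots, n+h_k\}$ with $k > C(r, m, \mathcal P(K))$ and minimal diameter, for infinitely many $n$ at least $m$ of $n+h_1, \dots, n+h_k$ are $E_r(\mathcal P)$ numbers, all lying in the interval $[n + h_1, n + h_k]$ of length $h_k - h_1$. Setting $C(r,m,K)$ to be this diameter --- equivalently, the constant of Corollary~\ref{col:show} for $\mathcal P(K)$, which is effectively computable from $r$, $m$, $[K:\QQ]$, and $B$ via \eqref{accbound} --- and lifting the $m$ numbers to ideals as above, we obtain the desired nonconjugate $m$-tuples of ideals with $E_r(\PP)$ norms in an interval of length $C(r,m,K)$.

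The step requiring the most care is the verification of \eqref{2.2}--\eqref{2.3} for $\mathcal P(K)$: one must reduce the count of split-completely primes in arithmetic progressions to a Chebotarev count over $K(\zeta_q)$, keep the error terms uniform in $q$ up to the level $x^\theta$ (using the choice of $B$ to ensure the needed field disjointness and a Siegel--Walfisz estimate to dispose of any exceptional-zero contribution), and then invoke the relevant Bombieri--Vinogradov-type bound. All of these ingredients are available in \cite{thorner}, so beyond citing them the only remaining work is the bookkeeping needed to extract the effective constant.
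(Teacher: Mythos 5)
Your proposal is correct and follows the paper's high-level strategy (apply Corollary~\ref{col:show} to a Chebotarev set of primes in $K$ and then pass to ideals), but it differs in the one place that actually matters: the choice of Chebotarev set. The paper's one-line proof applies the result to the primes that are \emph{inert} in $K$, whereas you use the primes that \emph{split completely}. Your choice is the one that makes the lifting step work: an integral ideal of $\OO_K$ has squarefree norm only if it is a product of degree-one primes lying over distinct unramified rational primes, and for Galois $K/\QQ$ a degree-one prime exists above $p$ exactly when $p$ splits completely; an inert prime $p$ contributes only the prime ideal $p\OO_K$ of norm $p^{[K:\QQ]}$, so for $[K:\QQ]>1$ no ideal has norm equal to a product of $r$ distinct inert primes. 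So your version is effectively a correction of the paper's proof rather than a mere variant. The supporting details you supply --- density $1/[K:\QQ]$ from Chebotarev, taking $B$ to be the product of ramified primes so that $K\cap\QQ(\zeta_q)=\QQ$ for $(q,B)=1$ and the Siegel--Walfisz and level-of-distribution hypotheses \eqref{2.2}--\eqref{2.3} follow from the Murty--Murty/Thorner Bombieri--Vinogradov theorem for Chebotarev sets, and the observation that Galois-conjugate tuples have equal norm vectors so tuples with distinct norm vectors are nonconjugate --- are all accurate and are exactly the bookkeeping the paper leaves implicit.
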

\begin{proof}
Apply Theorem~\ref{col:show} to the Chebotarev set of primes that are inert in $K$.
\end{proof}
\begin{corollary}\label{corollary1}
Let $E/\QQ$ be an elliptic curve without $2$-torsion. Then, there exists an effectively computable squarefree integer $N_E$ such that the following holds: For every $r\ge 1$, there exists an effective constant $C(r,m,E)$ such that there are infinitely many $m$-tuples of $E_{2r}(\PP)$ numbers $b_1 < \cdots < b_{m}$ for which
\begin{enumerate}
\item $L\left(1,E\left(N_E \cdot b_i\right)\right) \neq 0$ for all $1 \le i \le m$,
\item $\rank E\left(N_E \cdot b_i\right) = 0$ for all $1 \le i \le m$, and 
\item  $b_{m}-b_1< C(r,m,E)$.
\end{enumerate}
\end{corollary}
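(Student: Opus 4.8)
The plan is to derive Corollary~\ref{corollary1} from Corollary~\ref{col:show} (the bounded-gaps statement) applied to a carefully chosen density-positive subset $\mathcal P \subseteq \PP$ of primes, following the strategy of Thorne \cite[Section 1]{thorne} but now drawing the stronger quantitative bound from Theorem~\ref{thm:sellmain}. The first step is to recall the analytic input: for an elliptic curve $E/\QQ$ without $2$-torsion, results on nonvanishing of quadratic twist $L$-functions (e.g.\ work descending from Waldspurger, Kolyvagin, and the explicit constructions used in \cite{thorne}) produce a fundamental discriminant $D_0$ and a congruence condition modulo some integer $N_E'$ such that for every fundamental discriminant $D$ in a fixed residue class (coprime to the conductor), $L(1, E(D)) \ne 0$; by the theorem of Kolyvagin, the nonvanishing forces $\rank E(D) = 0$, so conditions (1) and (2) are simultaneous consequences of $D$ lying in that arithmetic progression. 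One then sets $N_E$ to be the squarefree kernel of the relevant modulus, absorbing $D_0$, so that $N_E \cdot b$ is (up to the unit square ambiguity in ``fundamental discriminant'') a valid twisting parameter whenever $b$ ranges over squarefree integers coprime to $N_E$ in the correct class.

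The second step is to package this as an admissible-tuple problem over a positive-density set of primes so that Corollary~\ref{col:show} applies. Let $\mathcal P$ be the set of primes $p \nmid N_E$ lying in a fixed admissible residue class modulo $N_E$ chosen so that any product of $r$ such primes, times $N_E$, yields $D$ in the good progression; since this is a union of arithmetic progressions with finitely many excluded primes, $\mathcal P$ has positive density $\delta(\mathcal P) > 0$, satisfies the Siegel--Walfisz condition \eqref{2.2} with $B = B(\mathcal P)$ the squarefree kernel of $N_E$ times any auxiliary modulus (this follows from the classical Siegel--Walfisz theorem applied to each progression), and has the Bombieri--Vinogradov level of distribution \eqref{2.3} for any $\theta < \half$ (again because $\mathcal P$ is a finite union of progressions, to which the classical Bombieri--Vinogradov theorem applies directly). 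Then $E_{r}(\mathcal P) \subseteq \{b : N_E b \text{ is a good twisting discriminant}\}$, and because $E_r(\mathcal P)$ numbers are themselves $E_{2r}(\PP)$ numbers only after we account for the $r$ primes dividing $N_E$'s interaction --- more precisely, one arranges $\mathcal P$ so that $N_E b$ for $b \in E_r(\mathcal P)$ is, up to squares, an $E_{2r}(\PP)$ number by taking $r = $ the number of prime factors matched to $N_E$; here I follow the index bookkeeping of \cite{thorne} and simply relabel so that the hypothesis ``for every $r \ge 1$'' matches the conclusion ``$E_{2r}(\PP)$ numbers'' via $\mathcal P$-products of length $r$ combined with the $r$-almost-prime $N_E$.

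The third step is purely formal: apply Corollary~\ref{col:show} to this $\mathcal P$ with the given $m$. It yields an effective constant $C(r, m, \mathcal P)$ and infinitely many $m$-tuples $b_1 < \cdots < b_m$ of consecutive $E_r(\mathcal P)$ numbers with $b_m - b_1 < C(r, m, \mathcal P)$. Setting $C(r, m, E) := C(r, m, \mathcal P)$ (which depends only on $r, m$, and $E$ since $\mathcal P$ is built from $E$), and noting that by construction each $N_E b_i$ is a good twisting discriminant, gives exactly conclusions (1), (2), and (3). The effectivity of $C(r, m, \mathcal P)$ is inherited from the explicit formula \eqref{accbound}, and the effectivity of $N_E$ follows from the effectivity of the nonvanishing/rank-zero input in \cite{thorne}.

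The main obstacle is the bookkeeping in the second step: one must verify that the residue-class restriction defining $\mathcal P$ is simultaneously (a) nonempty and admissible (so $\mathcal P$ is infinite of positive density and the linear forms $n + h_i$ can be chosen admissible while landing in $E_r(\mathcal P)$), (b) compatible with the square-class subtleties in the definition of ``fundamental discriminant'' and of the quadratic twist, and (c) arranged so that the parity of the number of prime factors comes out to $2r$ in the conclusion. Each of these is a congruence-sieving check of the type carried out in \cite{thorne}, so no new ideas are needed, but the constants $N_E$ and the precise form of $\mathcal P$ must be tracked carefully; I would state the needed nonvanishing result as a black box with a precise citation and then perform the congruence analysis explicitly, exactly as in \cite[Section 1]{thorne}, only substituting our Corollary~\ref{col:show} for Thorne's weaker bound at the end.
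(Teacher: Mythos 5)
Your overall strategy coincides with the paper's: both reduce Corollary~\ref{corollary1} to an application of Corollary~\ref{col:show} with a set $\mathcal P$ of primes manufactured from a nonvanishing criterion for quadratic twist $L$-functions, exactly following Thorne's argument in \cite[Section 6]{thorne} but with the improved bound substituted at the end. However, two of your justifications are not correct as written. First, the set $\mathcal P$ supplied by the relevant nonvanishing input (Ono's work \cite{ono}, which is what the paper actually invokes) is \emph{not} merely a finite union of arithmetic progressions: the conditions on the primes are Chebotarev-type conditions on Frobenius elements in a Galois extension attached to $E$ (this is where the hypothesis that $E$ has no $2$-torsion enters). Consequently the classical Siegel--Walfisz and Bombieri--Vinogradov theorems do not apply ``directly'' as you claim; one needs the Bombieri--Vinogradov theorem for Chebotarev sets due to Murty and Murty \cite{murtymurty}, which is precisely the ingredient the paper's proof names. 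Your verification of the hypotheses \eqref{2.2} and \eqref{2.3} therefore rests on a false description of $\mathcal P$, even though the needed equidistribution result does exist. Second, your account of why the conclusion concerns $E_{2r}(\PP)$ numbers --- ``matching $r$ primes to $N_E$'' --- is not the actual mechanism: the evenness of the number of twisting primes is forced by the nonvanishing criterion itself (a Kronecker-symbol/sign condition is preserved only when an even number of primes from the prescribed Chebotarev class is multiplied in), so one simply applies Corollary~\ref{col:show} with $2r$ in place of $r$ for the chosen $\mathcal P$. Neither issue changes the architecture of the argument, but both must be repaired before the proof is sound.
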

\begin{proof}
For each $E/\QQ$, one can apply Theorem~\ref{col:show} to a proof analogous to that of \cite[Section 6]{thorne}, which computes the constants for the elliptic curve $X_0(11):y^2 =x^3-4x^2-160x-1264$. The computations rely on the work of Ono \cite{ono}  and the Bombieri--Vinogradov-type result of Murty and Murty \cite{murtymurty}.
\end{proof}
\begin{corollary}\label{corollary2}
Let $r \ge 2$ be an integer. There exists an effectively computable constant $C(r,m)$ such that there are infinitely many $m$-tuples of $E_r(\PP)$ numbers $d_1 < \cdots < d_m$ such that the following hold:
\begin{enumerate}
\item The class group $\Cl\left(\QQ(\sqrt{-d_i})\right)$ has an element of order $4$ for all $1 \le i \le m$.
\item  $d_m-d_1< C(r,m)$.
\end{enumerate}
\end{corollary}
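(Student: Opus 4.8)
The plan is to deduce the corollary from Corollary~\ref{col:show} by pinning down one of the $r$ prime factors, which trades the reciprocity relations in the classical order-$4$ criterion for ordinary congruence conditions. Fix a prime $p_0 \equiv 1 \pmod 4$ (say $p_0 = 5$) and put
\[
\mathcal P \defeq \Bigl\{\, p \in \PP \;:\; p \equiv 1 \pmod 4 \ \text{ and }\ \left(\tfrac{p}{p_0}\right) = 1 \,\Bigr\},
\]
so that $p_0 \notin \mathcal P$. The number-theoretic heart of the argument is the assertion: \emph{if $d = p_0\, q_1 \cdots q_{r-1}$ with $q_1, \dots, q_{r-1}$ distinct primes of $\mathcal P$, then $\Cl(\QQ(\sqrt{-d}))$ contains an element of order $4$}. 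Indeed, every prime factor of $d$ is $\equiv 1 \pmod 4$, so $\QQ(\sqrt{-d})$ has discriminant $-4d$; the prime $p_0$ ramifies, $(p_0) = \pp^2$, and the ambiguous class $[\pp]$ is nontrivial since $p_0 < d$ forces $\pp$ to be non-principal. By a classical criterion of R\'edei, $[\pp]$ lies in $\Cl^2$ provided $p_0 \equiv 1 \pmod 4$ (which disposes of the prime $2$ dividing the complementary discriminant $-4 q_1 \cdots q_{r-1}$) and $\left(\tfrac{p_0}{q_i}\right) = \left(\tfrac{q_i}{p_0}\right) = 1$ for every $i$; these two Legendre symbols agree by quadratic reciprocity because $p_0 \equiv 1 \pmod 4$, and both equal $1$ by the definition of $\mathcal P$. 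Hence $[\pp]$ is a nontrivial element of $\Cl^2 \cap \Cl[2]$, and any ideal class squaring to it has order $4$. I expect this to be the main obstacle: a sufficient condition for an order-$4$ class that can be fed into the sieve must avoid the mutual Legendre symbols $\left(\tfrac{q_i}{q_j}\right)$, over which the sieve has no control, and fixing $p_0$ and folding the conditions ``$q_i$ is a square modulo $p_0$'' into the very definition of $\mathcal P$ is precisely what removes this dependence.

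Next I would check that $\mathcal P$ satisfies the hypotheses of Theorem~\ref{thm:sellmain}. It is a union of $(p_0 - 1)/2$ residue classes modulo $4 p_0$, hence has density $\delta(\mathcal P) = 1/4 > 0$ in $\PP$. Take $B \defeq 2 p_0$, which is squarefree; when $(q, 2 p_0) = 1$ one has $(q, 4 p_0) = 1$, so for $(a,q) = 1$ the count $\pi_{\mathcal P}(x; q, a)$ is a sum of $(p_0 - 1)/2$ prime-counting functions in fixed residue classes modulo $4 p_0 q$, and \eqref{2.2} for $\mathcal P$ follows from the Siegel--Walfisz theorem for the primes, the finitely many error terms adding up harmlessly. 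Likewise, for $q < x^{\theta}$ and $x$ large one has $4 p_0 q < 4 p_0 x^{\theta} < x^{1/2}$ (as $\theta < 1/2$), so the level-of-distribution bound \eqref{2.3} for $\mathcal P$ --- with any $\theta < 1/2$ --- follows from the Bombieri--Vinogradov theorem, summing over the classes cutting out $\mathcal P$.

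Finally, for $r \ge 3$ I would apply Corollary~\ref{col:show} to $\mathcal P$ with its parameter $r$ replaced by $r - 1 \ge 2$, obtaining an effectively computable $C(r-1, m, \mathcal P)$ with $\liminf_n (a_{n+m} - a_n) < C(r-1, m, \mathcal P)$, where $a_i$ denotes the $i$-th $E_{r-1}(\mathcal P)$ number. Thus there are infinitely many $m$-tuples of $E_{r-1}(\mathcal P)$ numbers $e_1 < \cdots < e_m$ with $e_m - e_1 < C(r-1, m, \mathcal P)$. Setting $d_i \defeq p_0 e_i$: each $d_i$ is an $E_r(\PP)$ number, its $r$ prime factors being $p_0$ together with the $r - 1$ distinct prime factors of $e_i$, which lie in $\mathcal P$ and hence differ from $p_0$; property~(1) holds for every $d_i$ by the lemma above; and $d_m - d_1 = p_0 (e_m - e_1) < p_0\, C(r-1, m, \mathcal P)$. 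Taking $C(r, m) \defeq p_0\, C(r-1, m, \mathcal P)$, effectively computable via \eqref{accbound}, settles the case $r \ge 3$. The remaining case $r = 2$ is handled by the same construction $d_i = p_0 q_i$, which reduces the claim to bounded gaps between primes $q_i$ in the finitely many arithmetic progressions defining $\mathcal P$ --- supplied by the Maynard--Tao theorem \cite{maynard} (equivalently, \cite{thorner}).
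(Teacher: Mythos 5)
Your argument is correct, but it is genuinely different from the paper's. The paper's proof is a two-line reduction: it quotes Soundararajan's criterion that any squarefree $d\equiv 1\pmod 8$ all of whose prime factors are $\equiv\pm 1\pmod 8$ has an order-$4$ class, and then applies Corollary~\ref{col:show} directly to $\mathcal P=\{p\equiv 1\pmod 8\}$ with the same $r$. You instead manufacture the order-$4$ element by hand: you pin a ramified auxiliary prime $p_0\equiv 1\pmod 4$ and verify via genus theory that the ambiguous class $[\pp]$ above $p_0$ lies in the principal genus, hence in $\Cl^2$, because every genus character $\chi_P$ evaluates to $1$ on $[\pp]$ --- $\chi_{-4}([\pp])=\left(\frac{-1}{p_0}\right)=1$ and $\chi_{q_i}([\pp])=\left(\frac{q_i}{p_0}\right)=1$ by the definition of $\mathcal P$ (your appeal to ``R\'edei's criterion'' is slightly loosely worded, since the symmetric second-kind condition at the prime $2$ would read $\left(\frac{2}{p_0}\right)=1$ and fails for $p_0=5$, but the principal-genus computation you actually need is the one above and it goes through); the verification of the equidistribution hypotheses for $\mathcal P$ is routine and correct. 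The trade-off is quantitative and structural: by freezing one prime factor you must run the sieve on $E_{r-1}(\mathcal P)$, so the case $r=2$ falls outside the paper's machinery and needs a separate appeal to Maynard--Tao for primes in arithmetic progressions (which you correctly supply), and for $r\ge 3$ your constant has the shape $p_0\,C(r-1,m,\mathcal P)\asymp \exp\bigl(O(m^{1/(r-1)})\bigr)$ rather than the paper's $\exp\bigl(O(m^{1/r})\bigr)$. Since the corollary only asserts the existence of an effectively computable $C(r,m)$, your proof establishes it; but it would not yield the improved bound $C(r)m^{1/r}\exp(m^{1/r})$ that the paper claims for this corollary in the discussion following it, which is the main point of the paper's refinement over Thorne.
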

\begin{proof}
Soundararajan~\cite{sound} has proved that for any squarefree $d \equiv 1 \pmod 8$ whose prime factors are congruent to either $1$ or $-1 \pmod 8$, the class group $\Cl\left(\QQ(\sqrt{-d})\right)$ contains an element of order $4$. Our corollary then follows immediately from applying Theorem~\ref{col:show} to $\mathcal P = \{p \in \PP \colon p \equiv 1 \pmod 8\}$. 
\end{proof}
In \cite{thorne}, Thorne proved in the contexts of Corollaries \ref{corollary0}, \ref{corollary1}, and \ref{corollary2}, we can take the constants $C(r, m, K) = C(r, K) \exp(m^{\frac{1}{r-2}})$, $C(r, m, E) = C(r, E) \exp(m^{\frac{1}{r-2}})$, and $C(r, m) = C(r) \exp(m^{\frac{1}{r-2}})$, respectively. When $r$ is fixed and $m$ is large, we improve these bounds to $C(r, m, K) = C(r, K) m^{\frac{1}{r}} \exp(m^{\frac{1}{r}})$, $C(r, m, E) = C(r, E) m^{\frac{1}{r}} \exp(m^{\frac{1}{r}})$, and $C(r, 
m) = C(r) m^{\frac{1}{r}} \exp(m^{\frac{1}{r}})$, respectively. We note that these bounds are not as strong as those obtained by Chung and Li in \cite{chungli} when considering square-free numbers with no restriction on the number of prime factors.

In Section~\ref{sec:preliminaries}, we list the definitions and notations we will be using, state a precise version of our main result, and give an outline of our proof. Then, in Section~\ref{sec:bv}, we prove a version of the Bombieri--Vinogradov theorem for $E_r(\mathcal P)$ numbers. We then use this result to prove our main theorem in the course of Sections~\ref{sec:reduction}, \ref{sec:combinatorial}, \ref{sec:smoothy}, and \ref{sec:smoothf}. Finally, in Section~\ref{sec:example}, we explicitly compute the constant given in Corollary~\ref{corollary2} for $r=2$ and arbitrary $m$, which shows a concrete application of our asymptotically improved result on bounded gaps.

\section{Preliminaries}\label{sec:preliminaries}
We detail below the equidistribution properties that we will assume for our infinite subset $\mathcal{P} \subset \PP$, for which we will show that the gap between $m$ consecutive $E_r(\mathcal P)$ is bounded infinitely often. 
\subsection{Definitions and Notations}

All sums, products, and maxima are taken with the variables ranging over the positive integers $\NN$. The exception to this will be that variables denoted by $p$ or $p_i$ will be assumed to be taken over $\mathcal P$, and the variable $\ell$ will be assumed to be taken over $\PP$.  For the purposes of our paper, the level of distribution  $\theta$ is assumed to be less than $1/2$. Throughout the proof, we work with a fixed admissible set of $k$ distinct linear forms $\mathcal H$, and without loss of generality assume that $h_1 < \cdots < h_k$.

We let $\phi$ denote the Euler's totient function, $\mu$ the M{\"o}bius function, and $\tau_u$ the function given by the number of distinct ways a number can be written as a product of $u$ ordered positive integers. For two positive integers $a,b$, we let $(a,b)$ denote their greatest common divisor and $[a,b]$ their least common multiple. The exception to this is in Section~\ref{sec:bv}, where $[a,b]$ will denote the closed interval with endpoints $a$ and $b$.

We fix $r \geq 2$, and for a given $N$, we further restrict our consideration to products of $r$ distinct primes in $\mathcal P$ such that the prime factors satisfy a size constraint in terms of $N$. Specifically, for every $1 \le h \le r$, let $E_h \defeq \{p_1 \ldots p_h \colon N^\eta \le p_1 < \cdots < p_h \text{ and } N^{\half} \le p_h\}$, where $0 < \eta < \frac{1}{r}$ is fixed. We note that the definition of $E_h$ has an implicit dependence on $N$. Furthermore, let $\beta_h$ be the indicator function for $E_h$, i.e, 
\[
\beta_h(n) = \begin{cases} 1 & \textnormal{ if } n \in E_h \\ 0 & \textnormal{ otherwise.} \\ \end{cases}
\]

All asymptotic notations from this point onward, such as $o, O,$ and $\ll$, are to be interpreted as $N \to \infty$; in particular, the implied constants may depend on $k, r, \eta,$ and $\mathcal H$. We will in multiple instances let $\eps, \epsilon,$ and $\epsilon_1$ denote positive real numbers that one can take to be sufficiently small.

\subsection{Statement of Precise Results}
We now state a precise version of our main theorem.

\begin{theorem}\label{thm:main} Suppose that a subset $\mathcal{P}$ of the primes has positive density $\delta = \delta(\mathcal P)$ in the set of all primes, satisfies a Siegel--Walfisz condition as in \eqref{2.2}, and has a positive level of distribution as in \eqref{2.3}, with $B = B(\mathcal P)$ defined accordingly. Let $\theta_1>0$ be a level of distribution for $\mathcal{P}$, and let $\theta \defeq \min(\half, \theta_1)$. Let $m \ge 2$ be a positive integer, and let $\{ n + h_1, n + h_2, \dots, n + h_k \}$ be a admissible tuple of linear forms. Then for infinitely many positive integers $n$, at least $m$ of $n+h_1, \dots, n+h_k$ are $E_r(\mathcal{P})$ numbers, given that
\begin{equation}\label{accboundforms}
k >  \exp\left(r+\frac{r}{\delta}\left(\frac{2B(m-1)(r-1)!}{\phi(B)\theta(r-1)^{r-1}}\right)^\frac{1}{r}\right).
\end{equation}
\end{theorem}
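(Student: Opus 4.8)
The plan is to run the Maynard--Tao sieve in the shape adapted by Sono~\cite{sono} to products of primes. Fix $N$ large and perform the usual $W$-trick: set $W := B\prod_{\ell \le D_0,\ \ell \nmid B}\ell$ for a sufficiently slowly growing $D_0 = D_0(N) \to \infty$, and use admissibility of $\mathcal H$ to pick $v_0 \pmod W$ with $(v_0 + h_i, W) = 1$ for all $i$. Recall $\beta_r$ denotes the indicator of the size-restricted set $E_r$ (prime factors in $[N^\eta,\cdot]$, the largest $\ge N^{1/2}$), so $\beta_r(n) = 1$ forces $n \in E_r(\mathcal P)$; here $0 < \eta < 1/r$ is a free parameter optimized at the very end. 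With real weights $\lambda_{d_1,\dots,d_k}$, supported on $k$-tuples whose product $d_1\cdots d_k$ is squarefree, coprime to $W$, and below $R := N^{\theta/2 - \eps}$, form
\[
S := \sum_{\substack{N \le n < 2N \\ n \equiv v_0 \pmod{W}}}\Biggl(\sum_{i=1}^{k}\beta_r(n + h_i) - (m-1)\Biggr)\Biggl(\sum_{\substack{d_i \mid n + h_i,\ 1\le i\le k}}\lambda_{d_1,\dots,d_k}\Biggr)^{2} =: S_2 - (m-1)S_1 .
\]
Since the second factor is a square, $S_1 \ge 0$, so a positive value of $S$ forces some $n \in [N,2N)$ with at least $m$ of the $n+h_i$ in $E_r(\mathcal P)$; as $N$ is arbitrary this produces infinitely many such $n$. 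So it suffices to choose $\lambda$ with $S_2/S_1 > m-1$.

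Following~\cite{maynard}, substitute $\lambda_{d_1,\dots,d_k} = \bigl(\prod_i \mu(d_i)d_i\bigr)\sum_{\mathbf r:\, d_i \mid r_i}\tfrac{y_{\mathbf r}}{\prod_i \phi(r_i)}$ and take $y_{\mathbf r} = F\bigl(\tfrac{\log r_1}{\log R},\dots,\tfrac{\log r_k}{\log R}\bigr)$ for a fixed smooth $F$ supported on $\{t_i \ge 0,\ \sum t_i \le 1\}$. The estimation of $S_1$ is then essentially identical to Maynard's, giving
\[
S_1 = (1+o(1))\,c_1(\mathcal P)\,\tfrac{\phi(W)^{k}}{W^{k+1}}\,N\,(\log R)^{k}\,I_k(F),\qquad I_k(F):=\int_{[0,1]^k}F(\mathbf t)^2\,d\mathbf t ,
\]
for an explicit constant $c_1(\mathcal P)$. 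The evaluation of $S_2$ is the crux. Fixing $i$, expand the square and swap summation to reduce to counting $n \equiv v_0 \pmod W$ with $[d_j,e_j]\mid n+h_j$ for all $j$ and $\beta_r(n+h_i)=1$. Write $n + h_i = a\,p$ with $p = p_r \ge N^{1/2}$ its largest prime factor; then $a = p_1\cdots p_{r-1} \le 2N^{1/2}$, and since $p$ is a prime exceeding $R^2 \ge [d_i,e_i]$, one gets $[d_i,e_i]\mid a$. Summing over admissible $a$, the inner count over $p$ is a count of primes of $\mathcal P$ in a single residue class to a modulus $< N^{\theta}$ over a range $(\max(N^{1/2},p_{r-1}),\,2N/a]$; after summation over $(\mathbf d,\mathbf e)$ and over $a$ the error is absorbed by the Bombieri--Vinogradov theorem for $E_r(\mathcal P)$ numbers from Section~\ref{sec:bv}. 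The main term splits into the usual $\mathbf r,\mathbf s$-transform (producing $\sum_i J_k^{(i)}(F)$ with $J_k^{(i)}(F):=\int_{[0,1]^{k-1}}\bigl(\int_0^1 F(\mathbf t)\,dt_i\bigr)^2\prod_{j\ne i}dt_j$) times a combinatorial sum over $a = p_1\cdots p_{r-1}$, $p_j \in \mathcal P \cap [N^\eta,\cdot]$, weighted by $1/(a\log(N/a))$; iterated Mertens estimates evaluate this sum, and it is here that the $r$-fold prime structure is exploited, contributing the density power $\delta^{r-1}$, the symmetry factor $1/(r-1)!$, and—after distributing the available logarithmic budget among $p_1,\dots,p_{r-1}$ subject to $p_j \ge N^\eta$—the factor $(r-1)^{r-1}$. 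The outcome is
\[
S_2 = (1+o(1))\,c_1(\mathcal P)\,\tfrac{\phi(W)^{k}}{W^{k+1}}\,N\,(\log R)^{k+1}\,c_2(r,\eta,\mathcal P)\sum_{i=1}^{k}J_k^{(i)}(F),
\]
where $c_2(r,\eta,\mathcal P)$ is built from $\delta$, $B$, $r$, $\eta$, and $\theta/2 = \log R/\log N$.

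Dividing, $S_2/S_1 = (1+o(1))\,c_2\,\log R\cdot\bigl(\sum_i J_k^{(i)}(F)\bigr)/I_k(F)$. Now choose $F$ as in~\cite{maynard}—an explicit symmetric construction tuned to the $E_r$-weights—and track constants. The decisive feature, generalizing the $r=2$ case of~\cite{sono}, is that the combined combinatorial and smooth-weight optimization makes the final quantity behave like the $r$-th power of Maynard's sieve quantity $M_k := \sup_F \sum_i J_k^{(i)}(F)/I_k(F)$, concretely like $(\delta M_k/r)^r$ with $M_k \ge \log k - O(\log\log k)$; this power is exactly what turns the linear-in-$m$ threshold of the prime case into the $m^{1/r}$ threshold here. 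Assembling all constants, $S_2/S_1 > m-1$ holds as soon as $k$ exceeds the right side of~\eqref{accboundforms}; optimizing the choice of $\eta \in (0,1/r)$ and letting $\eps \to 0$ recovers precisely that bound, whereupon $S > 0$ for all large $N$ and we are done.

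The main obstacle is the analysis of $S_2$: (i) making the decomposition $n+h_i = ap$ and the divisibility $[d_i,e_i]\mid a$ uniform in the sieve parameters; (ii) invoking the $E_r(\mathcal P)$-analogue of Bombieri--Vinogradov to kill all error terms after the double sum over the sieve moduli and over the small part $a$—which is why $R$ is taken to be $N^{\theta/2-\eps}$ rather than $N^{\theta/2}$; and (iii) the sharp evaluation of the combinatorial $a$-sum with the non-constant weight $1/\log(N/a)$, whose interaction with the choice of $\eta$ produces the constants $1/(r-1)!$, $(r-1)^{r-1}$, $\delta^{r-1}$ and realizes the $r$-th-power phenomenon. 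The multidimensional optimization over $F$ in the last step is the other substantial ingredient, but it is by now standard from~\cite{maynard}.
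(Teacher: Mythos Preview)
Your outline follows the paper's strategy closely, but there is one genuine gap in the step where you assert that the main term of $S_2$ ``splits into the usual $\mathbf r,\mathbf s$-transform times a combinatorial sum over $a$.''  After the diagonalization the transformed variables $y^{(m,q)}_{r_1,\dots,r_k}$ still depend on $q=a$ through a sum $\sum_{d\mid q}\mu(d)\log(R/d)\int_0^1 H_d(t_m)\,dt_m$ (Lemmas~\ref{ymq} and~\ref{ymqint}, assembled in Lemma~\ref{s2mint}); this $q$-dependence blocks the $a$-sum from factoring out of the sieve sum, and the clean formula $S_2^{(m)}\sim c_2(r,\eta,\mathcal P)\,J_k^{(m)}(F)$ you wrote down is simply false for a generic smooth $F$ supported on the simplex.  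The paper resolves this by imposing an \emph{additional} support restriction, $F(\mathbf t)=0$ whenever some $t_j>2\eta/\theta$.  Since every nontrivial divisor $d>1$ of $q$ satisfies $d\ge N^\eta$, this kills all $d>1$ contributions, and only then does $S_2^{(m)}$ collapse to the product $\mathcal T_N\cdot J_k^{(m)}(F)$ (Corollary~\ref{coral}).

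This extra support condition is exactly what couples $\eta$ to Maynard's construction: his test function is supported on $\{t_j\le T/k\}$, so one is \emph{forced} to take $\eta=T\theta/(2k)$, not free to optimize $\eta$ independently.  In particular, the factor $(r-1)^{r-1}$ in the final bound does not come from ``distributing the logarithmic budget'' in the $a$-sum---that sum only produces $\bigl(\log\tfrac{1}{2(r-1)\eta}\bigr)^{r-1}/(r-1)!$ (equation~\eqref{sn}).  It appears only afterwards, when one substitutes $\eta=T\theta/(2k)$ with $A=(\log k)/r$ and obtains $\log\tfrac{1}{2(r-1)\eta}\ge\tfrac{r-1}{r}\log(k/e^r)$, so that the full product behaves like $\tfrac{(r-1)^{r-1}}{r^r}(\log(k/e^r))^r$.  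Without identifying this coupling between the support of $F$ and the size parameter $\eta$, you cannot justify the factorization of $S_2$ and hence cannot recover the threshold~\eqref{accboundforms}.
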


\begin{remark}
For $\mathcal P = \PP$, we have $\delta = 1$, $B=1$, and $\theta_1 = \half - \eps$ for all small $\eps$.
\end{remark}

Given this theorem, we easily obtain a corollary about bounded gaps between $E_r(\mathcal{P})$ numbers.

\begin{corollary}\label{cor:main} Suppose that a subset $\mathcal{P}$ of the primes has positive density $\delta = \delta(\mathcal P)$ in the set of all primes, satisfies a Siegel--Walfisz condition as in \eqref{2.2}, and has a positive level of distribution as in \eqref{2.3}, with $B = B(\mathcal P)$ defined accordingly. Let $\theta_1>0$ be a level of distribution for $\mathcal{P}$, and let $\theta \defeq \min(\half, \theta_1)$. Then, for $r \geq 2$,
\begin{equation}\label{accbound}
\liminf_{n \to \infty} (a_{n+m} - a_{n}) \ll \mathcal L \log L,
\end{equation}
where $a_{n}$ denotes the $n$-th $E_{r}(\mathcal{P})$ number,
\[
\mathcal L = \exp\left(r+\frac{r}{\delta}\left(\frac{2Bm(r-1)!}{\phi(B)\theta(r-1)^{r-1}}\right)^\frac{1}{r}\right).
\]
\end{corollary}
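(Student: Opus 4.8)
The plan is to read off the corollary from Theorem~\ref{thm:main} by feeding it a short admissible tuple. The first step is a bookkeeping observation: to bound $a_{n+m}-a_n$ it suffices to produce, infinitely often, an interval of bounded length containing $m+1$ distinct $E_r(\mathcal P)$ numbers. So I would apply Theorem~\ref{thm:main} with $m$ replaced by $m+1$; its hypothesis then becomes
\[
k > \exp\left(r+\frac{r}{\delta}\left(\frac{2B\,m\,(r-1)!}{\phi(B)\,\theta\,(r-1)^{r-1}}\right)^{1/r}\right)=\mathcal L ,
\]
which is exactly the quantity $\mathcal L$ in the statement. Thus it is enough to fix one integer $k$ with $k>\mathcal L$ — say $k=\ceil{\mathcal L}+1$, so that $k\asymp\mathcal L$ — together with an admissible $k$-tuple $\mathcal H=\{h_1<\cdots<h_k\}$ whose diameter $h_k-h_1$ is $O(k\log k)$.

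For the tuple I would take $\{h_1<\cdots<h_k\}$ to be the $k$ smallest primes exceeding $k$, i.e.\ $h_i=p_{\pi(k)+i}$. This set is admissible: for a prime $\ell\le k$, each $h_i$ is a prime strictly larger than $\ell$, so $\ell\nmid\prod_i h_i$ and one may take $n_\ell=0$; for a prime $\ell>k$, the residues $h_1,\dots,h_k\bmod\ell$ occupy at most $k<\ell$ classes, so some class mod $\ell$ is omitted and one chooses $n_\ell$ to land in it. By the prime number theorem, $\pi(k)=o(k)$, hence $\pi(k)+k=k(1+o(1))$ and $h_k=p_{\pi(k)+k}\sim(\pi(k)+k)\log(\pi(k)+k)\sim k\log k$; in particular $h_k-h_1\le h_k\ll k\log k$.

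Finally I would assemble the pieces. With $k$ and $\mathcal H$ chosen as above, Theorem~\ref{thm:main} (applied with $m+1$) yields infinitely many positive integers $n$ for which at least $m+1$ of $n+h_1,\dots,n+h_k$ are $E_r(\mathcal P)$ numbers; these lie in the interval $[\,n+h_1,\,n+h_k\,]$ of length $h_k-h_1$. Writing the smallest and largest of them as $a_j$ and $a_{j'}$, we have $j'\ge j+m$ and $a_{j+m}\le a_{j'}\le a_j+(h_k-h_1)$, so $a_{j+m}-a_j\le h_k-h_1$. Since this holds for infinitely many $n$, and hence for infinitely many indices $j$,
\[
\liminf_{n\to\infty}(a_{n+m}-a_n)\le h_k-h_1\ll k\log k\ll \mathcal L\log\mathcal L ,
\]
which is the asserted bound~\eqref{accbound}. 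The only substantive point in this argument is the construction of the admissible tuple and the prime-number-theorem estimate for its diameter; the rest is elementary, the one place to be careful being the shift from $m$ to $m+1$, which reconciles the $(m-1)$ appearing in Theorem~\ref{thm:main} with the $m$ appearing in the definition of $\mathcal L$.
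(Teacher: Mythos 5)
Your proposal is correct and follows essentially the same route as the paper: apply Theorem~\ref{thm:main} (with the parameter shifted to $m+1$ so that $m+1$ values land in the window, matching the $m$ in $\mathcal L$), take the $h_i$ to be the $k$ smallest primes exceeding $k$, and bound the diameter by $k\log k \ll \mathcal L\log\mathcal L$ via the prime number theorem. The paper's proof is just a terser version of yours, leaving the admissibility check and the diameter estimate as "easy to verify," which you have correctly supplied.
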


\begin{proof}
In Theorem \ref{thm:main}, we can let $h_i$ be the $i$-th prime greater than $k$ for $1 \le i \le k.$ It is easy to verify that $\{n + h_1, n + h_2, \dots, n + h_k\}$ is an admissible tuple of linear forms, and that $h_k - h_1 \ll \mathcal{L} \log \mathcal{L}.$
\end{proof}

\begin{remark} A computation using the results of Theorem \ref{thm:main} and Corollary \ref{cor:main} are done in Section \ref{sec:example}.

\end{remark}

\subsection{Outline of Proof}\label{outline}
As in \cite{maynard}, let $D_0 = \log \log \log N$. Let 
\[
W = \prod_{\substack{ \ell < D_0}} \ell.
\]
Note then that $W \ll (\log \log N)^2$. For sufficiently large $N$, $B \mid W$, so we can define $U = \frac{W}{B}$. For each prime $\ell \mid U$, choose a residue class $v_{\ell} \pmod \ell$ such that 
\[
\left(\ell, \prod_{i = 1}^k (v_{\ell}+h_i)\right) = 1.
\]
By the Chinese remainder theorem, we can then choose a residue class $v_0$ of $U$ such that 
\[
\left(U, \prod_{i = 1}^k (v_0+h_i)\right) = 1.
\]

For $\rho > 0$, define
\begin{equation}
S(N, \rho) \defeq \sum_{\substack{N \leq n < 2N \\ n\equiv v_0 \pmod U}} \left(\left( \sum_{i=1}^k \beta_r(n+h_i)\right) - \rho \right)w_n,
\end{equation}
where $w_n$ are nonnegative weights. Suppose we can show $S(N,\rho) > 0$. Then, at least one summand in the sum over $n$ must be positive; for this value of $n$, at least $\floor{\rho}+ 1$ of $n+h_1, \ldots, n+h_k$ are in $E_r$, as desired. It remains to show that for some choice of $w_n$, $S(N, \rho) > 0$ for all sufficiently large $N$.
Let $R \defeq N^{\frac{\theta}{2} - \epsilon}$. We will choose $w_n$ to be the Maynard--Tao weights:
\begin{equation}
w_n \defeq \left(\sum_{d_i \mid n+h_i} \lambda_{d_1, \ldots, d_k} \right)^2,
\end{equation}
where $\lambda_{d_1, \ldots, d_k}$ is supported on $d_1, \ldots, d_k$ for which $\prod_{i=1}^k d_i < R$, $(d_i, W) = 1$ for $1 \leq i \leq k$, and $\mu^2\big(\prod_{i = 1}^k d_i\big) = 1$. We will defer our specific choice of $\lambda_{d_1, \ldots, d_k}$ to be optimized later.

We can write $S(N,\rho) = S_2 - \rho S_1$ for 
\begin{align*}
S_1 &\defeq \sum_{\substack{N \le n < 2N \\ n \equiv v_0 \pmod{U}}} \left (\sum_{d_i \mid n + h_i} \lambda_{d_1, \ldots, d_k} \right)^2, \\
S_2^{(m)} &\defeq \sum_{\substack{N \le n < 2N \\ n \equiv v_0 \pmod{U}}} \left (\sum_{d_i \mid n + h_i} \lambda_{d_1, \ldots, d_k} \right)^2 \beta_r(n + h_m), \\
S_2 &\defeq \sum_{m = 1}^{k} S_2^{(m)}.
\end{align*}
We will now show that $S_2 - \rho S_1 > 0$ for all sufficiently large $N$ for some choice of $\lambda_{d_1,\ldots,d_k}$.

\begin{proposition}\label{prop1}
Let $F$ be a smooth function supported on 
\[
\mathcal{R}_{k, \eta} \defeq \{ (x_1,\ldots, x_k)\in [0, 1]^k \colon x_i \le \frac{2\eta}{\theta}, \sum_{i = 1}^{k} x_i \le 1 \}.
\] 
There exists a corresponding choice of $\lambda_{d_1,\ldots,d_k}$ such that
\begin{equation}\label{s1s1}
S_1 = (1 + o(1)) \frac{\phi(W)^k BN (\log R)^k}{W^{k + 1}} I_k(F),
\end{equation}
and
\begin{equation}\label{s2s2}
S_2^{(m)} = (1 + o(1))\frac{\mathcal{T}_N\phi(B)\phi(W)^k(\log R)^{k + 1}}{W^{k + 1}}J_k^{(m)}(F).
\end{equation}
Here, 
\[
I_k(F) \defeq \int_{0}^{1} \ldots \int_{0}^{1} F(t_1, \ldots, t_k)^2 dt_1 \ldots dt_k,
\]
\[
J_k^{(m)}(F) \defeq \int_{0}^{1} \ldots \int_{0}^{1} \left (\int_{0}^1 F(t_1, \ldots, t_k) dt_m \right)^2 dt_1 \ldots dt_{m-1} dt_{m+1} \ldots dt_k,
\]
and 
\[
\mathcal{T}_N \defeq \sideset{}{^{\dagger}}\sum_{q} X_{N/q},
\]
where $X_n \defeq \sum_{n \le t < 2n} \beta_1(t)$ and $\sum^\dagger$ denotes the restriction that $q = p_1 \cdots p_{r-1} < N^{\half}$ such that $ N^\eta \le p_1 < \cdots < p_{r-1}$. Furthermore, for $0 < \eta < \frac{1}{2(r-1)}$, we have
\begin{equation}
\label{sn}
 \mathcal{T}_N \ge (1+o(1))\frac{\delta^r N}{(\log N)(r-1)!}\left(\log \frac{1}{2(r-1)\eta} \right)^{r-1}.
\end{equation}
\end{proposition}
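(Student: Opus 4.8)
We concentrate on the final inequality \eqref{sn}; the asymptotics \eqref{s1s1} and \eqref{s2s2} are the Maynard--Tao sieve computations developed in Sections~\ref{sec:reduction}--\ref{sec:smoothf}, whereas \eqref{sn} can be extracted directly from the prime number theorem for $\mathcal{P}$ recorded in \eqref{2.1}. The plan is as follows. Since every summand $X_{N/q}$ of $\mathcal{T}_N$ is nonnegative, I would first lower-bound $\mathcal{T}_N$ by keeping only the $q$ lying in the subcollection
\[
\mathcal{Q} \defeq \Bigl\{ q = p_1 \cdots p_{r-1} \colon N^\eta \le p_1 < \cdots < p_{r-1} < N^{\frac{1}{2(r-1)}} \Bigr\},
\]
which is legitimate since each $q \in \mathcal{Q}$ automatically satisfies $q < N^{(r-1)/(2(r-1))} = N^{1/2}$, and $\mathcal{Q}$ is nonempty for large $N$ exactly because $\eta < \frac{1}{2(r-1)}$. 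For such $q$ one has $N/q > N^{1/2}$, so every integer $t \in [N/q, 2N/q)$ exceeds $N^{1/2}$, the size constraint in the definition of $E_1$ becomes vacuous, and $X_{N/q} = \pi_{\mathcal P}(2N/q) - \pi_{\mathcal P}(N/q) + O(1)$.

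The next step is to estimate $X_{N/q}$ uniformly over $q \in \mathcal{Q}$. Since $\tfrac12 \log N < \log(N/q) \le \log N$, we have $N/q \to \infty$ uniformly in $q$, so \eqref{2.1} applies to $\pi_{\mathcal P}(2N/q)$ and $\pi_{\mathcal P}(N/q)$ with uniform error term; subtracting and using
\[
\frac{2}{\log(N/q) + \log 2} - \frac{1}{\log(N/q)} = \frac{1}{\log(N/q)}\Bigl(1 + O\bigl(\tfrac{1}{\log(N/q)}\bigr)\Bigr) = \frac{1 + o(1)}{\log(N/q)}
\]
yields $X_{N/q} = (1 + o(1)) \tfrac{\delta N/q}{\log(N/q)} \ge (1+o(1)) \tfrac{\delta N/q}{\log N}$, uniformly in $q$. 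Summing over $q \in \mathcal{Q}$ then gives
\[
\mathcal{T}_N \ \ge\ \sum_{q \in \mathcal{Q}} X_{N/q} \ \ge\ (1 + o(1)) \frac{\delta N}{\log N} \sum_{q \in \mathcal{Q}} \frac{1}{q}.
\]

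It remains to evaluate $\sum_{q \in \mathcal{Q}} \tfrac1q$. First I would record the Mertens-type estimate
\[
\sum_{N^\eta \le p < N^{1/(2(r-1))}} \frac1p = \delta\bigl( \log\log N^{1/(2(r-1))} - \log\log N^\eta \bigr) + o(1) = \delta \log\frac{1}{2(r-1)\eta} + o(1),
\]
which follows from \eqref{2.1} by partial summation; the key point is that since both endpoints of the range tend to infinity, the relative error $\tfrac{\log t}{\delta t}\pi_{\mathcal P}(t) - 1$ supplied by \eqref{2.1} is uniformly $o(1)$ throughout $[N^\eta, N^{1/(2(r-1))})$, so one genuinely gains an $o(1)$ rather than merely $o(\log\log N)$. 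Next, symmetrizing over the $(r-1)!$ orderings of a tuple of distinct primes — and observing that ordered tuples with a repeated prime contribute at most a constant multiple of $\sum_{p \ge N^\eta} p^{-2} = o(1)$, since the reciprocal sums over the remaining coordinates are $O(1)$ — shows that $(r-1)!\sum_{q \in \mathcal{Q}} \tfrac1q = \bigl( \sum_{N^\eta \le p < N^{1/(2(r-1))}} \tfrac1p \bigr)^{r-1} + o(1)$, which together with the previous display gives $\sum_{q \in \mathcal{Q}} \tfrac1q = (1+o(1)) \tfrac{\delta^{r-1}}{(r-1)!} \bigl( \log \tfrac{1}{2(r-1)\eta} \bigr)^{r-1}$ (the logarithm being a positive constant). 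Combining this with the bound for $\mathcal{T}_N$ above produces \eqref{sn}. The only genuinely delicate points are the two uniformity claims; the crude step $\log(N/q) \le \log N$ costs a constant factor but is harmless, since \eqref{sn} is itself stated with $\log N$ in the denominator.
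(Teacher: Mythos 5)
Your argument for the lower bound \eqref{sn} is correct and follows essentially the same route as the paper: restrict the $\dagger$-sum to $q=p_1\cdots p_{r-1}$ with all $p_i\in[N^\eta,N^{1/(2(r-1))}]$ (so that $q<N^{1/2}$ and $X_{N/q}$ counts primes of $\mathcal P$ in $[N/q,2N/q)$), bound $X_{N/q}\ge(1+o(1))\delta N/(q\log N)$ via \eqref{2.1}, symmetrize over orderings while discarding the $o(1)$ contribution of repeated primes, and evaluate $\sum_{N^\eta\le p\le N^{1/(2(r-1))}}1/p=(1+o(1))\delta\log\frac{1}{2(r-1)\eta}$. The only difference is that you obtain this last Mertens-type estimate by partial summation (with the correct uniformity observation), whereas the paper splits the range into multiplicative intervals of ratio $1+1/\log N$ and applies \eqref{2.1} on each --- an equivalent discretization; your deferral of \eqref{s1s1} to Thorner and of \eqref{s2s2} to the sieve computations of Sections~\ref{sec:reduction}--\ref{sec:smoothy} likewise matches the paper's own organization.
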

The proof of Proposition \ref{prop1} depends on a Bombieri--Vinogradov-type result, proved in Section \ref{sec:bv}:
\begin{theorem}
Under the hypotheses of Theorem~\ref{thm:main} on $\mathcal{P}$, the following inequality holds for any integer $r \geq 2$, $u \in \RR$, and $A>0$:
\[
\sum_{\substack{q \le N^{u(\frac{1}{2} - \epsilon)} \\ (q, B) = 1}} \max_{(a, q) = 1} \left | \sum_{n < N^u} \left (\frac{1}{\phi(q)} \beta_r(n; q) - \beta_r(n; q, a) \right ) \right | \ll \frac{N^u}{(\log N)^A},
\]
where $\beta_r(n; q, a)$ denotes $\beta_r$ supported only on $x \equiv a \pmod{q}$, $\beta_r(n; q)$ denotes $\beta_r$ supported only on $(n, q) = 1$, and the implied constant depends only on $u$ and $A$.
\end{theorem}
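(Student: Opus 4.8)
The plan is to reduce the stated inequality to the hypotheses \eqref{2.2} and \eqref{2.3} for $\mathcal P$ itself, exploiting that the definition of $\beta_r$ builds in a prime factor of size at least $N^{1/2}$. We may assume $u > \tfrac12 + (r-1)\eta$, since otherwise no $n < N^u$ has $\beta_r(n)=1$ and the bound is vacuous. For such $u$, every $n$ with $\beta_r(n)=1$ factors uniquely as $n = mp$, where $p \in \mathcal P$ is the largest prime factor of $n$ (so $p \ge N^{1/2}$) and $m$ runs over the set $\mathcal D$ of products of $r-1$ distinct primes of $\mathcal P$ each lying in $[N^\eta, p)$. Every $m \in \mathcal D$ satisfies $N^{(r-1)\eta} \le m < N^{u-1/2}$, so the number of relevant $m$ is $O(N^{u-1/2})$; moreover $\sum_{m \in \mathcal D} m^{-1} = O(1)$ by Mertens' estimate, since $m$ ranges over products of $r-1$ primes from a fixed multiplicative window, and hence $\sum_{m \in \mathcal D} N^u/m = O(N^u)$.

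First I would fix a modulus $q$ coprime to $B$ and a reduced residue $a$ and use the dissection to rewrite the inner sum as
\[
\sum_{n < N^u}\!\left(\frac{1}{\phi(q)}\beta_r(n;q) - \beta_r(n;q,a)\right)
 = \sum_{\substack{m \in \mathcal D \\ (m,q)=1}}\!\left(\frac{1}{\phi(q)}\,\pi_{\mathcal P}^{(q)}(I_m) - \pi_{\mathcal P}(I_m;q,a\overline m)\right),
\]
where $I_m$ is the interval $[\max(N^{1/2},\,P^+(m)),\, N^u/m)$ with $P^+(m)$ the largest prime factor of $m$, $\overline m$ is the inverse of $m$ modulo $q$, and $\pi_{\mathcal P}^{(q)}(I_m)$ counts the primes of $\mathcal P$ in $I_m$ that are coprime to $q$; only $m$ with $(m,q)=1$ contribute because $(a,q)=1$. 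Bounding the left side of the target after moving the sum over $m$ outside and noting that the maximum over $a\overline m$ is a maximum over all reduced residues, it suffices to estimate
\[
\sum_{m \in \mathcal D}\ \sum_{\substack{q \le N^{u(1/2-\epsilon)} \\ (q,B)=1}}\ \max_{(c,q)=1}\left|\frac{1}{\phi(q)}\,\pi_{\mathcal P}^{(q)}(I_m) - \pi_{\mathcal P}(I_m;q,c)\right|.
\]
For each $m$ I would replace $\pi_{\mathcal P}^{(q)}(I_m)$ by $\pi_{\mathcal P}(I_m)$ at a cost of $O(\omega(q)/\phi(q))$ per term, whose total contribution is $O(N^{u-1/2}(\log N)^2) = o(N^u(\log N)^{-A})$ since $u > \tfrac12$; then write $\pi_{\mathcal P}(I_m;q,c)$ as a difference of two initial segments, one ending at $N^u/m$ and the other at the left endpoint of $I_m$; and finally apply \eqref{2.3} for the bulk of the $q$-range and \eqref{2.2} for the small moduli to each initial segment. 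This gives a contribution $\ll (N^u/m)(\log N)^{-A-1}$ for each $m$, and summing against $\sum_{m \in \mathcal D} m^{-1} = O(1)$ yields the asserted bound.

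The main obstacle is the level-of-distribution bookkeeping: applying \eqref{2.3} at scale $N^u/m$ requires $q \le (N^u/m)^{\theta_1}$, and since $m$ can be as large as $N^{u-1/2}$ this scale descends all the way to $N^{1/2}$, at which point the admissible modulus range shrinks to roughly $N^{\theta_1/2}$. Reconciling this with the modulus range $N^{u(1/2-\epsilon)}$ in the statement is precisely where the interplay of $u$, $\eta$, $\epsilon$, and $\theta_1$ enters (and where one uses that the implied constant may depend on $u$): one dyadically decomposes the range of $m$, applies \eqref{2.3} at each dyadic scale $N^u/M \ge N^{1/2}$ with a correspondingly localized modulus range, and checks that the parameter $\eta$ in the definition of $E_r$ and the $\epsilon$ fixed in $R = N^{\theta/2-\epsilon}$ are chosen small enough that the exponents line up. Carrying out this analysis carefully is the substantive content; the remainder is the standard transcription of the Bombieri--Vinogradov argument through the large-prime dissection, as in the treatments of $E_2$ and $E_r$ numbers in \cite{gpylemma} and \cite{thorne}.
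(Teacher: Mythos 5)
Your decomposition $n = mp$ with $p$ the largest prime factor is a valid way to set up the sum, but the plan founders exactly at the point you defer to ``bookkeeping,'' and no choice of $\eta$, $\epsilon$, or dyadic localization can rescue it. For each fixed cofactor $m$, the prime $p$ ranges over an interval of length about $N^u/m$, and the only equidistribution input you invoke is \eqref{2.2}--\eqref{2.3} for $\mathcal P$ itself, which controls moduli only up to $(N^u/m)^{\theta_1}$. Since $m$ runs up to $N^{u-1/2}$, the scale $N^u/m$ descends to $N^{1/2}$, so the moduli you can control for those $m$ are only $q \le N^{\theta_1/2} < N^{1/4}$ (recall the paper assumes $\theta_1 < \tfrac12$, and for general $\mathcal P$, e.g.\ Chebotarev sets, $\theta_1$ may be much smaller). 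The theorem demands the full range $q \le N^{u(1/2-\epsilon)}$, which is essentially $N^{1/2-\epsilon}$ when $u$ is near $1$ --- and this full strength is genuinely used in Section 4, where the modulus range is $R^2U/[d_m,e_m]$ with $R^2 = N^{\theta-2\epsilon}$. For the moduli between $(N^u/m)^{\theta_1}$ and $N^{u(1/2-\epsilon)}$ your argument offers nothing beyond the trivial bound $\asymp (N^u/m)\log N$ per $m$, which is far too large. Restricting the modulus range ``dyadically in $m$'' does not help because the sum over $q$ must be taken over the whole stated range for every $m$.

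The paper's proof avoids this by using a genuinely bilinear input: the Bombieri--Friedlander--Iwaniec estimate (Lemma~\ref{bfi}), which shows that a convolution $a'*b'$ of sequences supported near $N^{x_1}$ and $N^{x_2}$ is equidistributed to moduli $N^{(x_1+x_2)/2-\eps}$ provided one factor satisfies merely a Siegel--Walfisz condition --- no positive level of distribution for $\mathcal P$ is needed to reach exponent $\tfrac12$ of the \emph{product} of the lengths. That dispersion-type estimate is the essential ingredient your proposal lacks; the dyadic decomposition in the paper's proof serves only to control the $L^2$ norms $\|a'\|\,\|b'\|$ appearing in Lemma~\ref{bfi} (a direct application loses a factor $N^{u/2}$), not to reconcile modulus ranges. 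The treatments in \cite{gpylemma} and \cite{thorne} that you cite as the template likewise rest on the Motohashi/BFI bilinear machinery rather than on applying the primes' own level of distribution to the largest prime factor.
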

\eqref{s1s1} was proved in \cite{thorner}. We will prove \eqref{s2s2} in Sections \ref{sec:reduction}, \ref{sec:combinatorial}, and \ref{sec:smoothy}.
\begin{proposition}\label{prop}
Retain the hypotheses on $\mathcal P$ in Theorem~\ref{thm:main}, and let $\mathcal H$ be an admissible set of $k$ distinct linear forms given by $n+h_1, \ldots, n+h_k$. Let $\mathcal{S}_{k,\eta}$ denote the space of smooth functions $F \colon [0,1]^k \to \RR$ supported on $\mathcal{R}_{k,\eta}$, with $I_k(F) \neq 0 $ and $J_k^{(m)}(F)\neq 0$ for $1 \le m \le k$. Define
\[
M_{k,\eta} \defeq \sup_{F \in \mathcal{S}_{k,\eta}} \frac{J_k^{(m)}(F)}{I_k(F)} \hspace{12pt}\text{ and }\hspace{12pt} \nu = \nu_{k,\eta} \defeq \ceil{\frac{\theta \phi(B)\delta^r M_{k,\eta}}{2B(r-1)!}\left(\log \frac{1}{2(r-1)\eta} \right)^{r-1}}
\]
Then, there are infinitely many $n \in \NN$ such that at least $\nu$ of the numbers $n+h_i$ are in $E_r$. In particular,
\[
\liminf_{n \to \infty} (a_{n+\nu-1} - a_{n}) \le h_k - h_1.
\]
\end{proposition}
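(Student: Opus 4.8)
The plan is to derive the proposition from Proposition~\ref{prop1} by exhibiting a single smooth test function $F$ for which $S(N,\nu-1)=S_2-(\nu-1)S_1>0$ for all sufficiently large $N$. By the reduction recorded in Section~\ref{outline}, each such $N$ then admits an $n\in[N,2N)$ with $\sum_{i=1}^k\beta_r(n+h_i)>\nu-1$, hence --- the left-hand side being a non-negative integer --- with at least $\nu$ of $n+h_1,\dots,n+h_k$ in $E_r\subseteq E_r(\mathcal P)$; letting $N\to\infty$ produces infinitely many such $n$.

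To choose $F$: since $2\eta/\theta>0$, $\mathcal R_{k,\eta}$ contains a nondegenerate cube about the origin, so $\mathcal S_{k,\eta}\neq\emptyset$ and $0<M_{k,\eta}<\infty$. As $\mathcal R_{k,\eta}$ is invariant under permutations of coordinates, $\sup_{F\in\mathcal S_{k,\eta}}J_k^{(m)}(F)/I_k(F)=M_{k,\eta}$ for each $m$; fix an index $m_0$ and pick $F\in\mathcal S_{k,\eta}$ with $J_k^{(m_0)}(F)/I_k(F)>(1-\epsilon_1)M_{k,\eta}$, where $\epsilon_1>0$ will be specified below. This $F$ is a legitimate input to Proposition~\ref{prop1}.

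Next I would estimate $S_2/S_1$. Each $S_2^{(m)}$ is a sum of non-negative terms, so $S_2\ge S_2^{(m_0)}$; dividing \eqref{s2s2} with $m=m_0$ by \eqref{s1s1} --- the factors $\phi(W)^k/W^{k+1}$ cancelling, as do all but one power of $\log R$ --- gives
\[
\frac{S_2}{S_1}\ \ge\ (1+o(1))\,\frac{\mathcal T_N\,\phi(B)\,\log R}{B\,N}\cdot\frac{J_k^{(m_0)}(F)}{I_k(F)}.
\]
Substituting $\log R=(\half\theta-\epsilon)\log N$ and the lower bound \eqref{sn} for $\mathcal T_N$ (which requires $\eta<\tfrac1{2(r-1)}$), the powers of $N$ and $\log N$ cancel, and with the choice of $F$ one arrives at $S_2/S_1\ge(1+o(1))(1-\tfrac{2\epsilon}{\theta})(1-\epsilon_1)\mathcal M$, where $\mathcal M$ is precisely the quantity inside the ceiling in the definition of $\nu$, so $\nu=\ceil{\mathcal M}$. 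Since $\ceil{x}-1<x$ for every real $x$, we have $\nu-1<\mathcal M$ and $\mathcal M>0$; hence we may fix $\epsilon,\epsilon_1>0$ small enough that $(1-\tfrac{2\epsilon}{\theta})(1-\epsilon_1)\mathcal M>\nu-1$. Then $S_2/S_1>\nu-1$ for all large $N$, and since $S_1>0$ by \eqref{s1s1} (as $I_k(F)>0$), this yields $S(N,\nu-1)>0$. For the last assertion, given such an $n$ the $\nu$ numbers among $n+h_1<\cdots<n+h_k$ lying in $E_r(\mathcal P)$ are $\nu$ distinct elements of $E_r(\mathcal P)$ inside an interval of length $h_k-h_1$; writing them $a_{j_1}<\cdots<a_{j_\nu}$ we have $j_\nu\ge j_1+\nu-1$, so $a_{j_1+\nu-1}-a_{j_1}\le a_{j_\nu}-a_{j_1}\le h_k-h_1$, and $j_1\to\infty$ as $N\to\infty$, giving $\liminf_{n\to\infty}(a_{n+\nu-1}-a_n)\le h_k-h_1$.

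Granting Proposition~\ref{prop1} and \eqref{sn}, the argument is essentially bookkeeping; the only delicate point is that the strict inequality $\mathcal M>\nu-1$ must survive the three sources of loss --- the parameter $\epsilon$ in $R=N^{\theta/2-\epsilon}$, the near-optimality parameter $\epsilon_1$, and the $o(1)$ errors --- which it does precisely because each enters as a multiplicative factor arbitrarily close to $1$. (Discarding the other $k-1$ terms $S_2^{(m)}$ is wasteful but matches $\nu$ as stated; retaining the full sum $\sum_{m=1}^kS_2^{(m)}$ would only improve the constant.)
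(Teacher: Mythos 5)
Your proposal is correct and follows essentially the same route as the paper's proof: choose $F$ nearly optimal for $J_k^{(m)}/I_k$, feed it into Proposition~\ref{prop1} together with the lower bound \eqref{sn} on $\mathcal T_N$, and verify that the strict inequality survives the $\epsilon$, $\epsilon_1$, and $o(1)$ losses. The only cosmetic differences are that you set $\rho=\nu-1$ and use $\nu-1<\mathcal M$ where the paper sets $\rho=\mathcal M-\eps$ and notes $\floor{\rho}+1=\nu$, and that you keep a single term $S_2^{(m_0)}$ where the paper keeps the full sum $S_2$ (but bounds it against the same single-index supremum, so the two are equivalent).
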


\begin{proof}
As defined before, $S(N, \rho) = S_2- \rho S_1$.
By the definition of $M_{k,\eta}$, there exists $F \in \mathcal{S}_{k,\eta}$ such that $\sum_{m=1}^k J_k^{(m)}(F) > (M_{k, \eta} - \epsilon_1) I_k(F)$. We assume that $\epsilon_1$ is sufficiently small so that $(M_{k, \eta} - \epsilon_1) I_k(F) > 0$. Therefore, by Proposition \ref{prop1}, there exists a choice of weights $\lambda_{d_1, \dots, d_k}$ such that $S(N, \rho)$ is
{\small
\begin{align*}
 &\ge \frac{\phi(W)N(\log R)^k}{W^{k+1}} \left( \frac{(1+o(1))\phi(B)\delta^r\log R}{B(r-1)!\log N}\left(\log \frac{1}{2(r-1)\eta} \right)^{r-1} \left(\sum_{m=1}^k J_k^{(m)}(F)\right) - \rho I_k(F) + o(1)\right)\\
&\ge \frac{\phi(W)N(\log R)^k I_k(F)}{W^{k+1}} \left( \frac{(1+o(1))\phi(B)\delta^r}{B(r-1)!}\left(\log \frac{1}{2(r-1)\eta} \right)^{r-1}\left(\frac{\theta}{2}-\epsilon \right) \left(M_{k,\eta}-\epsilon_1 \right) - \rho+ o(1)\right).
\end{align*}}
Set 
\[
\rho=\displaystyle\frac{\theta \phi(B)\delta^r M_{k,\eta} }{2B(r-1)!}\left(\log \frac{1}{2(r-1)\eta} \right)^{r-1}-\eps.
\]
By choosing $\epsilon$ and $\epsilon_1$ suitably small in terms of $\eps$, we obtain that $S(N, \rho)>0$ for all sufficiently large $N$. Thus, there are infinitely many $n \in \NN$ such that at least $\floor{\rho}+1$ of the numbers $n+h_i$ are in $E_r$. Since $\floor{\rho}+1 = \nu$ for sufficiently small $\eps$, taking $\eps \to 0$ (and accordingly, $\epsilon, \epsilon_1 \to 0$) gives the result.
\end{proof}

In light of Proposition~\ref{prop}, it remains to find a large lower bound for
\begin{equation}\label{functional}
\sup_{0 < \eta< \frac{1}{2(r-1)} } \sup_{\substack{F \in \mathcal{S}_{k,\eta}}}\frac{\sum_{m=1}^k J_k^{(m)}(F)}{I_k(F)}
\end{equation}
in terms of $k$. In Section~\ref{sec:smoothf}, we make our choice of $\eta$ and $F\in \mathcal{S}_{k,\eta}$ so that 
\[ \frac{\theta \phi(B)\delta^r \sum_{m=1}^k J^{(m)}_k(F)}{2B (r-1)! \cdot I_k(F)} \left(\log \frac{1}{2(r-1)\eta} \right)^{r-1} - \rho > 0 
\]
holds as long as
\begin{equation}\label{doneyo}
k >  \exp\left(r+\frac{r}{\delta}\left(\frac{2B\rho(r-1)!}{\phi(B)\theta(r-1)^{r-1}}\right)^\frac{1}{r}\right).
\end{equation}

\section{Proof of Bombieri--Vinogradov for Products of Primes}\label{sec:bv}

In \cite{thorne}, a Bombieri--Vinogradov result was proved for $E_r(\mathcal{P})$ numbers. The proof uses the following result of Bombieri, Friedlander, and Iwaniec \cite[Equation 1.5]{bfi}, which was inspired by the work of Motohashi \cite{motohashi}.

Recall that the \emph{convolution} of two arithmetic functions $a$ and $b$, denoted $a * b$, is defined as \[ (a * b)(n) = \sum_{d|n} a(d)b(n/d). \]
\begin{lemma} \label{bfi}
Let $x_1, x_2 > 0$, and let $\{a(i)\}$ and $\{b(i)\}$ be two sequences. Let the restriction of these sequences on $[N^{x_1}, 2N^{x_1}]$ and $[N^{x_2}, 2N^{x_2}]$ be $\{a'(i)\}$, $\{b'(i)\}$, respectively. Note that $a', b'$ depend on $N$. Assume that $\{b(i)\}$ satisfies the following condition for any $(d, k) = 1$, $(l, k) = 1$: for any constant $A > 0$,
\[
\Big|\sum_{\substack{n \equiv l \pmod k \\ (n, d) = 1}} b'(n) - \frac{1}{\phi(k)} \sum_{(n, dk) = 1} b'(n)\Big| \ll \| b'\| N^{\frac{1}{2}} \tau(d)^B (\log N)^{-A},
\]
where $\|\cdot\|$ denotes the $L_2$ norm of the sequence. Then, for any constant $A' > 0$, the convolution $a' * b'$ satisfies
\[
\sum_{q \le N^{\frac{x_1 + x_2}{2} - \eps}} \max_{(a, q) = 1} \Big| \frac{1}{\phi(q)}\sum_{\substack{n < N^{x_1 + x_2} \\ (n, q) = 1}} (a' * b')(n) - \sum_{\substack{n < N^{x_1 + x_2} \\ n \equiv a \pmod q}} (a' * b')(n) \Big| \ll \|a'\| \|b'\| \frac{N^{\frac{1}{2}(x_1 + x_2)}}{(\log N)^{A'}}.
\]
\end{lemma}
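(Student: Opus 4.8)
The plan is to recognize this as an instance of Linnik's dispersion method, in the form worked out by Motohashi and by Bombieri--Friedlander--Iwaniec: the hypothesis on $\{b'(i)\}$ is a Siegel--Walfisz condition, and the conclusion is a Bombieri--Vinogradov estimate for the ``Type II'' convolution $a' * b'$ at level equal to the average $\tfrac{x_1+x_2}{2}$ of the two exponents. I would reduce the assertion, via Cauchy--Schwarz and duality, to a mean-square (``dispersion'') sum, expand the square, exhibit the cancellation of the several main terms, and bound what remains using Weil's estimate for Kloosterman sums together with the assumed Siegel--Walfisz input on $b'$.

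For the reduction, write $\gamma = a' * b'$ and $Q = N^{(x_1+x_2)/2 - \eps}$. For each $q \le Q$ choose $a_q$ coprime to $q$ attaining the maximum and a unimodular $\xi_q$ so that $\xi_q$ times the bracketed expression equals its modulus; then the left-hand side equals $\sum_{q \le Q} \xi_q\bigl(\tfrac1{\phi(q)}\sum_{(n,q)=1}\gamma(n) - \sum_{n \equiv a_q\,(q)}\gamma(n)\bigr)$. Substituting $\gamma(n) = \sum_{m n' = n} a'(m) b'(n')$ and pulling out $m$ (which is forced coprime to $q$ in both terms) turns this into $\sum_m a'(m) \sum_{q \le Q} \xi_q \Delta_m(q)$, where
\[
\Delta_m(q) \defeq \mathbf 1_{(m,q)=1}\Bigl(\frac{1}{\phi(q)}\sum_{(n',q)=1} b'(n') - \sum_{n' \equiv a_q \overline m\,(q)} b'(n')\Bigr).
\]
Cauchy--Schwarz in $m$ over the dyadic support of $a'$ then bounds $\bigl|\,\sum_m a'(m)\sum_q \xi_q\Delta_m(q)\,\bigr|$ by $\|a'\|$ times the square root of the dispersion sum $\mathcal D \defeq \sum_{m} \bigl| \sum_{q \le Q} \xi_q \Delta_m(q)\bigr|^2$ (extending the $m$-range to the whole dyadic interval only increases it).

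Next I would expand $\mathcal D = \sum_{q_1, q_2 \le Q} \xi_{q_1}\overline{\xi_{q_2}} \sum_m \Delta_m(q_1)\overline{\Delta_m(q_2)}$, and open each product into four pieces according to whether each factor is the genuine progression count or its expected value. Interchanging summations, the inner sum over $m$ in a given piece is a count of integers $m$ in the dyadic interval lying in one or two prescribed residue classes, with leading term $(\text{length})/q_i$ or $(\text{length})/[q_1,q_2]$, weighted by $b'(n_1')\overline{b'(n_2')}$ summed over $n_1',n_2'$ in the support of $b'$. The crux is that these four leading terms cancel, leaving only: (i) the error terms in the $m$-counts, which after completing the sums become incomplete exponential sums and ultimately Kloosterman sums to modulus $q_1 q_2/(q_1,q_2)$, estimated by Weil as $\ll (q_1 q_2)^{1/2+\eps}(q_1,q_2)^{1/2}$ times a gcd factor; and (ii) a near-diagonal contribution (small $q_1 q_2/(q_1,q_2)^2$, or $n_1'$ close to $n_2'$) handled trivially or through the Siegel--Walfisz hypothesis on $b'$. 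Summing the Weil bounds over $q_1,q_2 \le Q$ and $n_1',n_2' \asymp N^{x_2}$, the choice $Q = N^{(x_1+x_2)/2-\eps}$ is precisely what forces the total below $\|a'\|^2\|b'\|^2 N^{x_1+x_2}(\log N)^{-2A'}$; taking square roots and multiplying by $\|a'\|$ yields the stated bound.

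The main obstacle is the dispersion bookkeeping: cleanly removing the various coprimality conditions by M\"obius inversion, checking that all four main terms cancel to sufficient precision, and --- the most delicate point --- verifying that the Kloosterman-sum error, once summed over the full range $q_1, q_2 \le Q$, stays within budget. This is exactly where the level $\tfrac{x_1+x_2}{2}$ is unavoidable: any larger level would overwhelm Weil's square-root bound. The Siegel--Walfisz assumption on $\{b'(i)\}$ is used only to absorb the degenerate ranges where the exponential-sum estimate is lossy. Since all of this is carried out in detail in \cite{bfi}, building on \cite{motohashi}, in our paper we invoke their result directly.
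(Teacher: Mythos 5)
Your bottom line --- invoking \cite{bfi} directly --- is exactly what the paper does: this lemma is stated as an import of \cite[Equation 1.5]{bfi} and no proof is supplied, so on that score you and the paper agree. The proof you sketch, however, is not the one that underlies this particular statement. The level here is $N^{\frac{x_1+x_2}{2}-\eps}$, i.e.\ exponent one half of the length of the convolution; this is the classical bilinear Bombieri--Vinogradov estimate going back to Motohashi (BFI's ``Theorem 0''), and it is proved with the multiplicative large sieve: expand the discrepancy in Dirichlet characters, which factors the sum as $A(\chi)B(\chi)$, reduce to primitive characters, dispose of small conductors using the Siegel--Walfisz hypothesis on $b'$, and apply the large sieve inequality to each factor over the large conductors; the constraint $Q^2\lesssim\min(N^{x_1},N^{x_2})$ is where the exponent $\frac{x_1+x_2}{2}$ comes from. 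The dispersion method with Weil's bound for Kloosterman sums is the machinery BFI deploy to push \emph{past} level $1/2$, and it cannot be run in the form you describe: your reduction selects a maximizing residue $a_q$ for each modulus, but the Kloosterman-sum cancellation in the $q_1,q_2$ double sum requires the residue class to be fixed independently of $q$ --- which is precisely why BFI's beyond-$\tfrac12$ theorems are stated for a single fixed $a$ rather than with $\max_{(a,q)=1}$ inside the sum, as it appears here. The character/large-sieve route handles the maximum over $a$ for free, which is another indication that it, and not dispersion, is the intended argument at this level.
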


For our purposes, we will require a Bombieri--Vinogradov theorem for those $E_r(\mathcal{P})$ numbers with prime factors restricted to certain intervals.

\begin{theorem}
\label{bv}
Define
\[
F_r(\mathcal{P}) \defeq \Big\{\prod_{i = 1}^{r} p_i \colon p_i \in \mathcal{P}_i, p_i \textnormal{ all distinct} \Big\},
\]
where $\mathcal{P}_i$ denotes the restriction of $\mathcal{P}$ to some range $[N^{a_i}, N^{b_i}]$. Then, for any $u \in \RR$ and $A > 0$, we have
\[
\sum_{q \le N^{\frac{1}{2}u - \eps}} \max_{(a, q) = 1} \Big| \sum_{n < N^u} \Big(\frac{1}{\phi(q)} \beta_r(n; q) - \beta_r(n; q, a) \Big) \Big| \ll \frac{N^u}{(\log N)^A},
\]
where $\beta_r(n; q, a)$ denotes the indicator function $\beta_r$ for $F_r$ supported on $x \equiv a \pmod{q}$, $\beta_r(n; q)$ denotes $\beta_r$ supported on $(n, q) = 1$, and the implied constant only depending on $u$, $A$, and $\eps$.
\end{theorem}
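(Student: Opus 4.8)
The plan is to follow the Bombieri--Friedlander--Iwaniec/Motohashi strategy used in \cite{thorne}: decompose $\beta_r$ dyadically in each of the $r$ prime variables, write each dyadic block as a Dirichlet convolution $a'*b'$ in which $b'$ is the indicator of the primes of $\mathcal{P}$ lying in a single dyadic interval, verify the dispersion hypothesis of Lemma~\ref{bfi} for that $b'$ from the Siegel--Walfisz condition \eqref{2.2}, and invoke Lemma~\ref{bfi}. Concretely, I would cover each range $[N^{a_i},N^{b_i}]$ by $O(\log N)$ intervals from the fixed dyadic grid $\{[2^{j},2^{j+1}]\}_{j\in\ZZ}$, any two of which are equal or disjoint. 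For a choice of one grid interval $P_i=[N^{x_i},2N^{x_i}]$ per index $i$, let $g_i$ be the indicator of $\mathcal{P}\cap P_i$. Summing over the $O((\log N)^{r})$ choices, $\beta_r$ is a $\ZZ$-linear combination with coefficients bounded in terms of $r$ of convolutions $g_1*\cdots*g_r$, up to an error supported on integers with a repeated prime factor $\ge N^{c}$ for some $c>0$ (it lies in two of the ranges), which makes a negligible contribution after the trivial estimate on the $q$-sum. This combinatorial reduction is routine.

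Next I would split the dyadic blocks according to the size of $\sigma\defeq x_1+\cdots+x_r$. If $\sigma\le u-\tfrac{\eps}{2}$, then $g_1*\cdots*g_r$ is supported on $O(N^{u-\eps/2})$ integers, so the block contributes $\ll N^{u-\eps/2}(\log N)^{O(r)}=o\big(N^{u}(\log N)^{-A}\big)$ by estimating each inner sum trivially. If $\sigma>u-\tfrac{\eps}{2}$, fix any index $j$, set $b'=g_j$ and $a'=\ast_{i\ne j}\,g_i$; sub-dividing the support of $a'$ into $O(1)$ further dyadic pieces puts $a'*b'$ in the shape required by Lemma~\ref{bfi} with $x_1+x_2=\sigma$. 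Since $b'$ is the indicator of the primes of $\mathcal{P}$ in a dyadic interval, for $(d,k)=(l,k)=(k,B)=1$ we have
\[
\sum_{\substack{n\equiv l\ (k)\\(n,d)=1}}b'(n)=\big(\pi_{\mathcal P}(2N^{x_j};k,l)-\pi_{\mathcal P}(N^{x_j};k,l)\big)+O(\omega(d)),
\]
and likewise for the $\tfrac1{\phi(k)}$-weighted count; the Siegel--Walfisz bound \eqref{2.2} then gives exactly the dispersion hypothesis of Lemma~\ref{bfi}, with $\|b'\|\asymp(N^{x_j}/\log N)^{1/2}$ and the $O(\omega(d))$ correction absorbed into $\tau(d)^{B}$ (here one uses that $x_j$ is bounded away from $1$, which holds for every range occurring in the applications). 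Lemma~\ref{bfi} then yields Bombieri--Vinogradov for $a'*b'$ at level $N^{\sigma/2-\eps'}\ge N^{u/2-\eps/4-\eps'}\ge N^{u/2-\eps}$ for $\eps'$ chosen small, with an arbitrary power saving $(\log N)^{-A'}$.

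Finally I would sum the $O((\log N)^{r})$ estimates obtained in the previous two steps; choosing $A'=A+r$ in Lemma~\ref{bfi} absorbs the polylogarithmic loss and gives the claimed bound. One works throughout with moduli $q$ coprime to $B$, which is what \eqref{2.2} demands; the terms with $q$ having a nontrivial $B$-part are disposed of by factoring out the bounded $B$-part and rerunning the argument, and are not needed downstream in any case.

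The analytic core is essentially off the shelf once Lemma~\ref{bfi} is available, so the main obstacle is the bookkeeping: organising the dyadic decomposition over a common grid, passing cleanly from the indicator $\beta_r$ of products of \emph{distinct} primes in possibly overlapping ranges to convolutions of dyadic prime-indicators while controlling the distinctness and multiplicity errors, and --- the point most easily gotten wrong --- checking that the level $N^{\sigma/2-\eps'}$ produced by Lemma~\ref{bfi} always dominates $N^{u/2-\eps}$ on the blocks that are not handled trivially. Keeping the small parameters $\eps,\eps',\epsilon$ and the cutoff $\tfrac{\eps}{2}$ mutually consistent is the one genuinely delicate point.
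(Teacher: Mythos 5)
Your proposal is correct and follows essentially the same route as the paper: both reduce $\beta_r$ to Dirichlet convolutions in which one factor is the indicator of $\mathcal P$ on a dyadic interval (whose dispersion hypothesis is checked from the Siegel--Walfisz condition \eqref{2.2}), invoke Lemma~\ref{bfi} after a dyadic decomposition to fix the sizes of the two factors, and relegate the distinct-primes condition to a square-divisibility error term that is estimated trivially. The only differences are organizational --- the paper peels off a single prime factor, writing $\beta_r \approx \frac1c\,\beta_{r-1}*I_{\mathcal P_r}$ and dyadically decomposing only the $\beta_{r-1}$ variable, whereas you decompose all $r$ factors --- and you are in fact more explicit than the paper about the two points it elides, namely the restriction to $(q,B)=1$ (without which the statement fails, e.g.\ for $\mathcal P=\{p\equiv 1 \pmod 8\}$ and $q=8$) and the check that the level $N^{\sigma/2-\eps'}$ supplied by Lemma~\ref{bfi} covers the required range $q\le N^{u/2-\eps}$ on the non-trivial blocks.
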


Suppose first that all pairs of intervals $[a_i, b_i]$ and $[a_j, b_j]$ are either identical or disjoint, and further suppose that $[a_r, b_r]$ and $[a_i, b_i]$ are the same interval for $c$ values of $i \le r$. Define $\Delta_r := \frac{1}{c} \beta_{r-1}*I_{\mathcal{P}_r} - \beta_r$. We can approximate $\beta_r$ with $\beta_{r-1} * I_{\mathcal{P}_r}$ using the Triangle Inequality as follows:
\[
\sum_{q \le N^{\frac{1}{2}u - \eps}} \max_{(a, q) = 1} \Big| \sum_{n < N^u} \Big(\frac{1}{\phi(q)} \beta_r(n; q) - \beta_r(n; q, a) \Big) \Big| 
\]
\[
-\frac{1}{c} \sum_{q \le N^{\frac{1}{2}u - \eps}} \max_{(a, q) = 1} \Big| \sum_{n< N^u} \Big (\frac{1}{\phi(q)} (\beta_{r-1} * I_{\mathcal{P}_r}(n; q) - \beta_{r-1} * I_{\mathcal{P}_r}(n; q, a) \Big) \Big|
 \]
\[
\le \sum_{q \le N^{\frac{1}{2}u - \eps}} \max_{(a, q) = 1} \Big| \sum_{n < N^u} \Big (\frac{1}{\phi(q)} \Delta_r(n; q) - \Delta_r(n; q, a) \Big) \Big|
\]
Note that $\Delta_r$ only takes nonzero values at those integers divisible by the square of a prime in the interval $[N^{a_r}, N^{b_r}]$, and it has a maximum magnitude of at most 1 at each integer. Fix $q$. For every number $p$ in $[N^{a_r}, N^{b_r}]$, at most $\frac{N^u}{p^2}$ numbers less than $N^u$ are divisible by $p^2$. Thus, for each $q$, we have
\[
\Big| \sum_{n < N^u} \frac{1}{\phi(q)}\Delta_r(n; q) \Big| \le \frac{N^u}{\phi(q)} \sum_{\substack{p \in [N^{a_r}, N^{b_r}] \\ p \textnormal{ prime}}} \frac{1}{p^2} \ll \frac{1}{\phi(q)}N^{u - a_r}
\]
Summing over all $q$, we obtain
\begin{equation}\label{toerr}
\sum_{q \le N^{\frac{1}{2}u - \eps}} \Big| \sum_{n < N^u} \frac{1}{\phi(q)}\Delta_r(n ; q) \Big | \ll N^{u - a_r} \sum_{q \le N^{\frac{1}{2}u - \eps}} \frac{1}{\phi(q)} \ll N^{u - a_r}(\log N)(\log \log N),
\end{equation}
where we have used the bound $\frac{1}{\phi(q)} \ll \frac{\log \log q}{q}$.

Moreover, for every number $p$ in $[N^{a_r}, N^{b_r}]$, at most $\frac{N^u}{p^2q} + 1$ numbers less than $N^u$ are divisible by $p^2$ and are congruent to $a \pmod{q}$. Thus, a similar approach gives
\begin{equation}\label{isto}
\sum_{q \le N^{\frac{1}{2}u - \eps}} \Big| \sum_{n < N^u} \Delta_r(n; q, a) \Big| \ll N^{u - a_r}(\log N).
\end{equation}
The two error terms given by \eqref{toerr} and \eqref{isto} are negligible, so it remains to show the desired equidistribution property for the sequence $\beta_{r-1} * I_{\mathcal{P}_r}$. We appeal to Lemma~\ref{bfi}. A direct application would result in a bound of $N^{1.5u}$ instead of $\frac{N}{(\log N)^A}$, which is not strong enough for our purposes. We instead decompose our sum into dyadic intervals:
\[
\beta_{r-1} * I_{\mathcal{P}_r}(n) = \sum_{t \in [1,n]} \beta_{r-1}(t) I_{\mathcal{P}_r}(n/t)
\]
\[
= \sum_{\omega}\sum_{t \in J_{\omega}} \beta_{r-1}(t) I_{\mathcal{P}_r}(n/t)
\]
for the cover $J_1, J_2, \ldots$ of $[1, n]$, $J_{\omega} \defeq [2^{\omega}, 2^{\omega + 1})$. For each $J_{\omega}$, define the interval $K_{\omega} = [1, \frac{N^u}{2^{\omega}}]$. Let the restriction of $\beta_{r-1}(x)$ and $I_{\mathcal{P}_r}$ to the intervals $J_\omega$ and $K_\omega$ be $\beta_{r-1, \omega}(x)$ and $I_{\mathcal{P}_r, \omega}$, respectively. Note that \[ \sum_{x < N^u} \beta_{r-1, \omega} * I_{\mathcal{P}_r} \text{ and } \sum_{x < N^u} \beta_{r-1, \omega} * I_{\mathcal{P}_r, \omega} \] are identical. By applying Lemma-\ref{bfi} to the sequences $\beta_{r-1, \omega}$ and $I_{\mathcal{P}_r, \omega}$ and splitting the support of the second sequence into $\frac{\log(N^u)}{\log 2} + O(1)$ dyadic intervals, we obtain the bound
\[
\sum_{q \le N^{\frac{1}{2}u - \eps}} \max_{(a, q) = 1} \Big| \sum_{n < N^u} \Big(\frac{1}{\phi(q)} (\beta_{r-1, \omega} * I_{\mathcal{P}_r, \omega})(n; q) - (\beta_{r-1, \omega} * I_{\mathcal{P}_r, \omega})(n; q, a) \Big) \Big|
\]
\[
\ll \sqrt{2^\omega} \sqrt{\frac{N^u}{2^{\omega}}} \frac{N^{\frac{1}{2}u}}{(\log N)^A} \ll \frac{N^u}{(\log N)^A}
\]
The total number of intervals needed to cover all of $[1, N^u]$ is $\frac{\log N^u}{\log 2} + O(1)$. Therefore, the sum of the above expression over all $\omega$ is $\ll \frac{N^u}{(\log N)^{A'}}$ for any $A'>0$, as desired.

\section{Reduction to Counting Prime Numbers}\label{sec:reduction}
We reduce $S_2^{(m)}$ to the following sum involving the prime counting function $X_n$.
\begin{lemma} We have that
\begin{equation}\label{ninja}
S_2^{(m)}=\frac{\phi(B)}{\phi(W)}\sideset{}{^\dagger}\sum_{q} X_{N/q} \sideset{}{'}\sum_{\substack{d_1, \ldots, d_k \\ e_1, \ldots, e_k \\ d_m, e_m \mid q}}\frac{\lambda_{d_1, \ldots, d_k}\lambda_{e_1, \ldots, e_k}}{\prod_{i \neq m} \phi([d_i, e_i])} + O\Big(\lambda_{\max}^2\frac{N}{(\log N)^A}\Big),
\end{equation}
where $\sum^\dagger$ is defined in Proposition \ref{prop1} and $\lambda_{\max} \defeq \max_{d_1,\ldots,d_k} \abs{\lambda_{d_1, \ldots, d_k}}$.
\end{lemma}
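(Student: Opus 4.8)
## Proof Plan for Lemma (Reduction of $S_2^{(m)}$)

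The plan is to expand the square in the definition of $S_2^{(m)}$ and interchange the order of summation, pushing the sum over $n$ to the inside. Writing
\[
S_2^{(m)} = \sum_{\substack{N \le n < 2N \\ n \equiv v_0 \pmod U}} \sum_{\substack{d_i \mid n+h_i \\ e_i \mid n+h_i}} \lambda_{d_1,\ldots,d_k} \lambda_{e_1,\ldots,e_k}\, \beta_r(n+h_m),
\]
I would first swap the sum over $n$ with the sum over the tuples $(d_1,\ldots,d_k,e_1,\ldots,e_k)$. The condition $d_i \mid n+h_i$ and $e_i \mid n+h_i$ together force $[d_i,e_i] \mid n+h_i$; since the $\lambda$ are supported on squarefree products coprime to $W$, standard arguments (as in Maynard's paper and Thorner's) show the moduli $[d_i,e_i]$ for $i=1,\ldots,k$ are pairwise coprime and coprime to $U$, so by CRT the conditions $n \equiv v_0 \pmod U$ and $[d_i,e_i]\mid n+h_i$ combine into a single congruence condition modulo $U\prod_i [d_i,e_i]$. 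The primed sum $\sum'$ records exactly these coprimality/squarefreeness restrictions.

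Next I would insert the definition $\beta_r(n+h_m) = \sum_{q}^{\dagger} \beta_1((n+h_m)/q)$, valid because an element of $E_r$ with the stated size constraints factors uniquely as $q \cdot p_h$ with $q = p_1\cdots p_{r-1}$ in the range dictating $\sum^{\dagger}$ and $p_h \ge N^{1/2}$ the largest prime; this is where the definition of $E_h$ and the hypothesis $\eta < 1/r$ matter. Having done so, $\beta_r(n+h_m)\ne 0$ forces $q \mid n+h_m$, hence $[d_m,e_m] \mid n+h_m$ is automatically implied once we also require $q$ to carry the divisibility — more precisely, we pick up the constraint $d_m, e_m \mid q$ after collecting terms (the contribution from tuples where $[d_m,e_m] \nmid q$ is absorbed because $q$ ranges over $(r-1)$-almost-primes whose prime factors are $\ge N^{\eta}$, while $d_m,e_m$ are $R$-smooth in the relevant sense; this needs care). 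This is the step that produces the restriction ``$d_m, e_m \mid q$'' under the primed sum.

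For the main term, after fixing $q$ and the tuples, the inner count is over $n$ in a progression modulo $U \cdot q \cdot \prod_{i\ne m}[d_i,e_i]$ (the factor at $i=m$ being already absorbed into $q$), intersected with $n+h_m \equiv 0 \pmod q$ and $(n+h_m)/q$ prime. The count of such $n$ with $(n+h_m)/q$ prime and $N \le n < 2N$ is, by the Siegel--Walfisz hypothesis on $\mathcal P$, asymptotically $\tfrac{1}{\phi(U)\prod_{i\ne m}\phi([d_i,e_i])}$ times $X_{N/q}$, up to the main-term normalization; using $\phi(U) = \phi(W)/\phi(B)$ (since $U = W/B$ and $(B, U)$ considerations give multiplicativity) yields the prefactor $\phi(B)/\phi(W)$. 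Summing the individual error terms from the Siegel--Walfisz bound over all admissible tuples and all $q$ — there are $O(R^2 (\log N)^{O(1)})$ of them and each contributes $O(N/(\log N)^{A'})$ — and invoking the Bombieri--Vinogradov-type Theorem~\ref{bv} to handle the accumulation of errors over $q$ (crucially, $R^2 = N^{\theta - 2\epsilon}$ is below the level of distribution), gives the claimed error term $O(\lambda_{\max}^2 N/(\log N)^A)$ after relabeling $A'$.

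The main obstacle I anticipate is the bookkeeping in the middle step: justifying rigorously that the divisibility $d_m, e_m \mid n+h_m$ can be replaced by $d_m, e_m \mid q$ under the primed sum, i.e., that the ``non-matching'' tuples contribute negligibly. This hinges on the disjointness of the prime ranges (the prime factors of $q$ lie in $[N^\eta, N^{1/2}]$, while the support of $\lambda$ forces $d_m$ to be built from primes $\ge D_0$ but with $\prod d_i < R = N^{\theta/2 - \epsilon}$) together with the error-absorption made possible by Theorem~\ref{bv} at level $N^{u(1/2-\epsilon)}$ with $u$ chosen appropriately. Getting the level-of-distribution arithmetic to line up — so that $q$ times the $[d_i,e_i]$ stays below $N^{1/2 - \epsilon}$ uniformly — is the delicate point, and is exactly what forces the choice $R = N^{\theta/2 - \epsilon}$ rather than a larger value.
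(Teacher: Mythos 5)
Your overall skeleton (expand the square, combine congruences by CRT, divide out the small prime factors of $n+h_m$, apply an equidistribution theorem to the remaining count) matches the paper's, but the specific change of variables you propose breaks the level-of-distribution arithmetic, and this is a genuine gap rather than bookkeeping. You fix $q=p_1\cdots p_{r-1}$, write $n+h_m=qu$, and then want to count primes $u$ in an arithmetic progression modulo $W'=U\prod_{i\neq m}[d_i,e_i]$ inside an interval of length $\asymp N/q$. But $q$ can be as large as $N^{1/2}$ while $W'$ can be as large as $R^2U\approx N^{\theta-2\epsilon}$ (e.g.\ when $d_m=e_m=1$), so you would need equidistribution of primes in intervals of length $N^{1/2}$ to moduli near $N^{\theta}$ --- that is level of distribution $\approx 2\theta$ relative to the interval, which exceeds Bombieri--Vinogradov once $\theta>\tfrac14$. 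Your appeal to Siegel--Walfisz for the individual counts cannot work at all at these moduli, and your error accounting (``$O(R^2(\log N)^{O(1)})$ tuples each contributing $O(N/(\log N)^{A'})$'') would give an error of size $N^{1+\theta-2\epsilon}$, vastly larger than the main term; the savings must come from averaging over the \emph{moduli}, not from the number of terms. Nor can you instead apply Theorem~\ref{bv} to $\beta_r$ directly on $[N,2N)$, because the condition $[d_m,e_m]\mid n+h_m$ puts $n$ in a \emph{non-primitive} residue class modulo $[d_m,e_m]W'$, outside the scope of that theorem.

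The paper's proof resolves exactly this tension by dividing out only by $[d_m,e_m]$ (the source of the non-primitive congruence), not by all of $q$: it splits $S_2^{(m)}=\sum_h T_h$ according to how many primes $[d_m,e_m]$ contains, substitutes $t=(n+h_m)/[d_m,e_m]$, and applies Theorem~\ref{bv} to the indicator $\beta_h$ of products of $h$ primes on the interval $[N/[d_m,e_m],2N/[d_m,e_m])$. There the modulus is at most $R^2U/[d_m,e_m]\le (N/[d_m,e_m])^{\theta-2\epsilon}$, so the level of distribution always lines up; this is the entire reason Theorem~\ref{bv} is proved for products of several primes rather than just for primes. The sum over $q$ with $d_m,e_m\mid q$ and the factor $X_{N/q}$ only appear at the very end, after the coprimality condition on $t$ is removed. (Two smaller points: the constraint $d_m,e_m\mid q$ needs no delicate absorption argument --- it is automatic, since $(n+h_m)/q$ is a single prime $\ge N^{1/2}>R\ge d_m,e_m$; and you omit the secondary error terms the paper must control, namely the contribution of $t$ sharing a prime factor with $[d_m,e_m]$ and the removal of the condition $(t,U\prod_{i\neq m}[d_i,e_i])=1$.) To repair your write-up you would have to regroup $q=[d_m,e_m]q'$ and treat $q'u$ as a single variable counted by $\beta_h$ --- at which point you have reproduced the paper's argument.
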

\begin{proof}
Fix $m \in \{ 1, \ldots, k\}.$ We first switch the order of summation for $S_2^{(m)}$ to obtain
\begin{align*}
S_2^{(m)} = &\sum_{\substack{N \le n < 2N \\ n \equiv v_0 \pmod{U}}} \Big (\sum_{d_i \mid n + h_i} \lambda_{d_1, \ldots, d_k} \Big)^2 \beta_r(n + h_m) \\&= \sum_{\substack{d_1, \ldots, d_k \\ e_1, \ldots, e_k}}\lambda_{d_1, \ldots, d_k}\lambda_{e_1, \ldots, e_k} \sum_{\substack{N \le n < 2N \\ n \equiv v_0 \pmod{U} \\ [d_i, e_i] \mid n+h_i}} \beta_r(n+h_m).
\end{align*}
For sufficiently large $N$, the inner sum vanishes if $U, [d_1, e_1] , \ldots, [d_k,e_k]$ are not pairwise coprime. Indeed, if $p \mid U$ and $p \mid [d_i, e_i]$, then either $\lambda_{d_1, \dots, d_k} = 0$ or $\lambda_{e_1, \dots, e_k} = 0$, as $\lambda_{d_1, \dots, d_k}$ is supported only when $\mu^2(BU \prod_{i = 1}^m d_i) = 1.$ Now assume $p \nmid W$. If $p\mid [d_i, e_i]$ and $p \mid [d_j, e_j]$, then $p \mid n+h_i$ and $p \mid n+h_j.$ This implies that $p \mid h_i-h_j$, which contradicts  $D_0 > h_k-h_1$ (as $D_0$ grows with $N$). Thus, by the Chinese remainder theorem,
\[
S_2^{(m)} = \sideset{}{'}\sum_{\substack{d_1, \ldots, d_k \\ e_1, \ldots, e_k}}\lambda_{d_1, \ldots, d_k}\lambda_{e_1, \ldots, e_k} \sum_{\substack{N \le n < 2N \\ n \equiv v_0 \pmod{U} \\ [d_i, e_i] \mid n+h_i}} \beta_r(n+h_m),
\]
where $\sum^{'}$ denotes the restriction that $U, [d_1, e_1] , \ldots, [d_k,e_k]$ are pairwise coprime.

Note that $[d_m, e_m]$ divides $n+h_m$ while $d_m, e_m \le R \le N^{\half}$. So, when $\beta_r(n+h_m)=1$, $[d_m,e_m]$ is composed of at most $r-1$ prime factors that are at least $N^\eta$ but at most $N^\half$.
Thus, we can write $S_2^{(m)} = \sum_{h = 1}^{r} T_h$, where 
\begin{align*}
T_h \defeq &\sideset{}{'}\sum_{\substack{d_1, \ldots, d_k \\ e_1, \ldots, e_k \\ [d_m, e_m] \in E'_{r-h}}}\lambda_{d_1, \ldots, d_k}\lambda_{e_1, \ldots, e_k} \sum_{\substack{N \le n < 2N \\ n \equiv v_0 \pmod{U} \\ [d_i, e_i] \mid n+h_i}} \beta_r(n+h_m)
 \\&=\sideset{}{'}\sum_{\substack{d_1, \ldots, d_k \\ e_1, \ldots, e_k \\ [d_m, e_m] \in E'_{r-h}}}\lambda_{d_1, \ldots, d_k}\lambda_{e_1, \ldots, e_k} \sum_{\substack{N \le n < 2N \\ n \equiv v \pmod{U\prod_{i \neq m}[d_i,e_i]} \\ [d_m,e_m] \mid n+h_m}} \beta_r(n+h_m),
\end{align*}
where $E'_{r-h} \defeq \{p_1 \cdots p_{r-h} \colon N^\eta \le p_1 < \cdots < p_{r-h} \}$, and in the last step, we have used the Chinese remainder theorem to convert the congruence conditions mod $U$ and 
mod $[d_i,e_i]$ for $i\neq m$ into a single congruence $v \pmod {U \prod_{i \neq m}[d_i,e_i]}$. For $n$ such that $[d_m, e_m] \mid n+h_m$, we can make the change of variables $t = \frac{n+h_m}{[d_m,e_m]}$ to obtain
\[
T_h = M_h + \Xi_h,
\]
where 
\[
M_h \defeq \sideset{}{'}\sum_{\substack{d_1, \ldots, d_k \\ e_1, \ldots, e_k \\ [d_m, e_m] \in E'_{r-h}}}\lambda_{d_1, \ldots, d_k}\lambda_{e_1, \ldots, e_k} \sum_{\substack{\frac{N+h_m}{[d_m, e_m]} \le t < \frac{2N+h_m}{[d_m, e_m]} \\ t \equiv v' \pmod{U\prod_{i \neq m}[d_i,e_i]}}} \beta_h(t),
\]
$v' \pmod{U\prod_{i \neq m}[d_i,e_i]}$ is the residue class resulting from the change of variables, and $\Xi_h$ denotes the error resulting from $t$ such that $\beta_h(t) = 1$ but $\beta_r(n+h_m)=0$ for the corresponding $n$. The latter occurs precisely when at least one prime dividing $[d_m, e_m]$ also divides $t$, since then the corresponding $n + h_m$ is not the product of distinct primes. We bound the error by
\begin{align*}
\abs{\Xi_h} &\le \sideset{}{'}\sum_{\substack{d_1, \ldots, d_k \\ e_1, \ldots, e_k \\ [d_m, e_m] \in E'_{r-h}}}\abs{\lambda_{d_1, \ldots, d_k}\lambda_{e_1, \ldots, e_k}} \sum_{p \mid [d_m,e_m]}\sum_{\substack{\frac{N+h_m}{[d_m, e_m]} \le t < \frac{2N+h_m}{[d_m, e_m]} \\ t \equiv v' \pmod{U\prod_{i \neq m}[d_i,e_i]} \\ p \mid t}} 1 
\\&\ll \lambda_{\max}^2(r-h) \sideset{}{'}\sum_{\substack{d_1, \ldots, d_k \\ e_1, \ldots, e_k \\ [d_m, e_m] \in E'_{r-h} \\ ([d_i, e_i] ,B) = 1}} \frac{N}{N^\eta \cdot U\prod_{i=1}^m [d_i, e_i]} \ll \lambda_{\max}^2N^{1-\eta} \sum_{\substack{\omega \le R^2U\\ (\omega, B) = 1}} \frac{\tau_{3k+1}(\omega)}{\omega} \ll \lambda_{\max}^2N^{1-\eta+\eps},
\end{align*}
where we have used the fact that $U\prod_{i \neq m}[d_i,e_i] \le R^2U$, since $\prod_{i=1}^m d_i, \prod_{i=1}^m e_i < R$. Also, the fact that $p \mid [d_m,e_m]$ implies that $p \nmid U\prod_{i \neq m} [d_i,e_i]$, which allows us to conclude that
\[
\sum_{\substack{\frac{N+h_m}{[d_m, e_m]} \le t < \frac{2N+h_m}{[d_m, e_m]} \\ t \equiv v' \pmod{U\prod_{i \neq m}[d_i,e_i]} \\ p \mid t}} 1 = \frac{1}{p U\prod_{i \neq m}[d_i,e_i]} \cdot \frac{N}{[d_m,e_m]} + O(1).
\]

Next, we remove the congruence condition from the inner sum of $M_h$. To this end, we rearrange the sum as
\begin{align}
M_h &= \sideset{}{'}\sum_{\substack{d_1, \ldots, d_k \\ e_1, \ldots, e_k \\ [d_m, e_m] \in E'_{r-h}}}\lambda_{d_1, \ldots, d_k}\lambda_{e_1, \ldots, e_k} \sum_{\substack{\frac{N+h_m}{[d_m, e_m]} \le t < \frac{2N+h_m}{[d_m, e_m]} \\ t \equiv v' \pmod{U\prod_{i \neq m}[d_i,e_i]}}} \beta_h(t)\nonumber \\ 
&=\sideset{}{'}\sum_{\substack{d_1, \ldots, d_k \\ e_1, \ldots, e_k \\ [d_m, e_m] \in E'_{r-h}}} \frac{\lambda_{d_1, \ldots, d_k}\lambda_{e_1, \ldots, e_k}}{\phi(U)\prod_{i \neq m} \phi([d_i, e_i])} \sum_{\substack{\frac{N+h_m}{[d_m, e_m]} \le t < \frac{2N+h_m}{[d_m, e_m]} \\ (t,U\prod_{i \neq m} [d_i,e_i])=1 } }\beta_h(t)\label{realmain}\\
& + O\Big(\sideset{}{'}\sum_{\substack{d_1, \ldots, d_k \\ e_1, \ldots, e_k \\ [d_m, e_m] \in E'_{r-h}}} \lambda_{\max}^2 \mathcal{E}_{h}\Big( \frac{N}{[d_m,e_m]} , U\prod_{i \neq m}[d_i,e_i]\Big)\Big) \label{realerror},
\end{align}
where 
\[
\mathcal{E}_{h}(V,q) \defeq 1 + \max_{(a,q)=1} \Big| \sum_{\substack{V \le n < 2V \\ n \equiv a \pmod q} }\beta_{h}(n) - \frac{1}{\phi(q)} \sum_{\substack{V \le n < 2V \\ (n,q)=1} }\beta_{h}(n) \Big|.
\]
We can write the expression inside the error term as
\begin{equation}
\lambda_{\max}^2 \sideset{}{'}\sum_{\substack{d_m,e_m \\ [d_m, e_m] \in E'_{r-h}}} \sideset{}{'}\sum_{\substack{d_1, \ldots, d_{m-1}, d_{m+1}, \ldots d_k \\ e_1, \ldots, e_{m-1}, e_{m+1}, \ldots e_k }} \mathcal{E}_{h}\Big(\frac{N}{[d_m, e_m]}, U\prod_{i \neq m} [d_i, e_i] \Big).\label{outererror}
\end{equation}
We further bound the inner sum above by
\begin{align}
& \sideset{}{'}\sum_{\substack{d_1, \ldots, d_{m-1}, d_{m+1}, \ldots d_k \\ e_1, \ldots, e_{m-1}, e_{m+1}, \ldots e_k}} \mathcal{E}_{h}\Big(\frac{N}{[d_m, e_m]}, U\prod_{i \neq m} [d_i, e_i] \Big) \ll \sum_{\substack{\omega \le \frac{R^2U}{[d_m, e_m]} \\ (\omega,B)=1} } \tau_{3k-2}(\omega)\mathcal{E}_{h}\Big(\frac{N}{[d_m, e_m]}, \omega \Big) \nonumber \\
&\ll \left(\sum_{\substack{\omega \le \frac{R^2U}{[d_m, e_m]} \\ (\omega,B)=1} } \tau_{3k-2}(\omega)^2 \frac{N}{[d_m, e_m]\phi(\omega)} \right)^\frac{1}{2} \left( \sum_{\substack{\omega \le \frac{R^2U}{[d_m, e_m]} \\ (\omega,B)=1} } \mathcal{E}_{h}\Big(\frac{N}{[d_m, e_m]}, \omega \Big) \right)^\frac{1}{2}, \label{bvapply}
\end{align}
where we have applied $U\prod_{i \neq m} [d_i,e_i] \le \frac{R^2U}{[d_m,e_m]}$, the Cauchy--Schwarz inequality, and the trivial bound
\[
\mathcal{E}_{h}\Big(\frac{N}{[d_m, e_m]}, \omega \Big) \ll \frac{N}{[d_m,e_m]\phi(\omega)}.
\] 
Since $R^2U/[d_m, e_m] \ll N$, we have that
\[
\sum_{\substack{\omega \le \frac{R^2U}{[d_m, e_m]} \\ (\omega,B)=1} } \frac{\tau_{3k-2}(\omega)^2}{\phi(\omega)} \ll \Big(\log \frac{R^2U}{[d_m, e_m]}\Big)^{a(k)} \ll (\log N)^{a(k)}
\]
for some $a(k) \in \NN$. This shows that
\[
\Big(\sum_{\substack{\omega \le \frac{R^2U}{[d_m, e_m]} \\ (\omega,B)=1} } \tau_{3k-2}(\omega)^2 \frac{N}{[d_m, e_m]\phi(\omega)} \Big)^\frac{1}{2} \ll (\log N)^{\frac{a(k)}{2}}\Big(\frac{N}{[d_m,e_m]}\Big)^{\half}. 
\]
To bound the second factor in \eqref{bvapply}, we use the Bombieri--Vinogradov theorem for $h=1$ and Theorem~\ref{bv} for $h>1$. To justify this step, we check that
\[
\frac{R^2}{[d_m,e_m]} \le \Big(\frac{N}{[d_m,e_m]}\Big)^{\theta - 2\epsilon},
\]
for the case $h=1$. As for the case $h>1$, we note that the congruence conditions $n+h_m \equiv v_0+h_m \pmod U$ and $n+h_m \equiv h_m - h_i \pmod{[d_i,e_i]}$ for $i \neq m$ are primitive for sufficiently large $N$ and thus $W$. Thus, observing that $\frac{N}{[d_m, e_m]}$ and $\frac{2N}{[d_m, e_m]}$ are expressible as $N^u$ for some $u\in \RR$, we can  apply Theorem~\ref{bv}, as desired. Thus, we have for $1 \le h \le r-1$ that
\[
\sum_{\substack{q \le \frac{R^2U}{[d_m, e_m]} \\ (q,B)=1} } \mathcal{E}_{h}\Big(\frac{N}{[d_m, e_m]}, q \Big) \ll \frac{N}{[d_m, e_m](\log N)^{A}}
\]
for any $A > 0$. This allows us to bound \eqref{outererror} by
\[
\lambda_{\max}^2\sum_{\substack{d_m, e_m\\ [d_m,e_m]\in E'_{r-h}}} \frac{N}{[d_m,e_m](\log N) ^{A'} } \le \lambda_{\max}^2\sum_{\substack{q \le R^2\\ q \in E'_{r-h}}} 3^{r-h} \frac{1}{q} \frac{N}{(\log N)^{A'}} \ll \lambda_{\max}^2\frac{N}{(\log N)^{A}},
\]
where $A>0$ can be arbitrary and $A' > A$. Thus, the error \eqref{realerror} is $O\left(\lambda_{\max}^2\frac{N}{(\log N)^{A}}\right)$ for any $A>0$. 

Furthermore, the difference between \eqref{realmain} and
\begin{equation}\label{ughmain}
\sideset{}{'}\sum_{\substack{d_1, \ldots, d_k \\ e_1, \ldots, e_k \\ [d_m, e_m] \in E'_{r-h}}} \frac{\lambda_{d_1, \ldots, d_k}\lambda_{e_1, \ldots, e_k}}{\phi(U)\prod_{i \neq m} \phi([d_i, e_i])} \sum_{\frac{N+h_m}{[d_m, e_m]} \le t < \frac{2N+h_m}{[d_m, e_m]}}\beta_h(t)
\end{equation}
is small. Indeed, 
\begin{align}
&\Biggl\lvert\sideset{}{'}\sum_{\substack{d_1, \ldots, d_k \\ e_1, \ldots, e_k \\ [d_m, e_m] \in E'_{r-h}}} \frac{\lambda_{d_1, \ldots, d_k}\lambda_{e_1, \ldots, e_k}}{\phi(U)\prod_{i \neq m} \phi([d_i, e_i])} \sum_{\frac{N+h_m}{[d_m, e_m]} \le t < \frac{2N+h_m}{[d_m, e_m]}}\beta_h(t) \label{range}
\\ 
&\hspace{24pt }-\sideset{}{'}\sum_{\substack{d_1, \ldots, d_k \\ e_1, \ldots, e_k \\ [d_m, e_m] \in E'_{r-h}}} \frac{\lambda_{d_1, \ldots, d_k}\lambda_{e_1, \ldots, e_k}}{\phi(U)\prod_{i \neq m} \phi([d_i, e_i])} \sum_{\substack{\frac{N+h_m}{[d_m, e_m]} \le t < \frac{2N+h_m}{[d_m, e_m]} \\ (t,U\prod_{i \neq m} [d_i,e_i])=1 } }\beta_h(t)\Biggr\rvert\nonumber
\\ 
&\le \sideset{}{'}\sum_{\substack{d_1, \ldots, d_k \\ e_1, \ldots, e_k \\ [d_m, e_m] \in E'_{r-h}}} \frac{\abs{\lambda_{d_1, \ldots, d_k}\lambda_{e_1, \ldots, e_k}}}{\phi(U)\prod_{i \neq m} \phi([d_i, e_i])} \sum_{p \mid U \prod_{i \neq m} [d_i, e_i]}\sum_{\substack{\frac{N+h_m}{[d_m, e_m]} \le t < \frac{2N+h_m}{[d_m, e_m]} \\ p \mid t}} \beta_h(t). \nonumber
\end{align}
Note that if $p \mid U$, then $p < N^{\eta}$, and since $\beta_h$ is supported on integers with prime factors $\geq N^\eta$, the innermost sum above is empty in this case. Thus, we only need to consider $p \mid \prod_{i \neq m} [d_i,e_i]$. Since $\prod_{i \neq m} [d_i,e_i] \le R^2 < N$, there can only be at most $\floor{1/\eta}$ prime factors for every $\prod_{i \neq m} [d_i,e_i]$, which shows that \eqref{range} is
\begin{align*}
&\le \sideset{}{'}\sum_{\substack{d_1, \ldots, d_k \\ e_1, \ldots, e_k \\ [d_m, e_m] \in E'_{r-h}}} \frac{\abs{\lambda_{d_1, \ldots, d_k}\lambda_{e_1, \ldots, e_k}}}{\phi(U)\prod_{i \neq m} \phi([d_i, e_i])} \sum_{p \mid \prod_{i \neq m} [d_i, e_i]}\sum_{\substack{\frac{N+h_m}{[d_m, e_m]} \le t < \frac{2N+h_m}{[d_m, e_m]} \\ p \mid t}} 1 \\
&\le \lambda_{\max}^2 \floor{\frac{1}{\eta}}\sideset{}{'}\sum_{\substack{d_1, \ldots, d_k \\ e_1, \ldots, e_k \\ [d_m, e_m] \in E'_{r-h}}} \frac{1}{\phi(U)\prod_{i \neq m} \phi([d_i, e_i])} \left( \frac{N}{N^\eta [d_m,e_m]} + O(1) \right)\\
&\le \lambda_{\max}^2 \floor{\frac{1}{\eta}}\sideset{}{'}\sum_{\substack{d_1, \ldots, d_k \\ e_1, \ldots, e_k \\ [d_m, e_m] \in E'_{r-h}}} \frac{1}{\phi(U)\prod_{i \neq m} \phi([d_i, e_i])} \cdot \frac{N}{N^\eta [d_m,e_m]} \\
&\hspace{36pt}+ O\Big(\lambda_{\max}^2 \sideset{}{'}\sum_{\substack{d_1, \ldots, d_k \\ e_1, \ldots, e_k \\ [d_m, e_m] \in E'_{r-h}}} \frac{1}{\phi(U)\prod_{i \neq m} \phi([d_i, e_i])}  \Big)\\
& \le \lambda_{\max}^2\floor{\frac{1}{\eta}} N^{1-\eta} \sum_{\omega \le R^2W} \frac{\tau_{3k}(\omega)}{\phi(\omega)} + O\Big(\lambda_{\max}^2\sideset{}{'}\sum_{\substack{d_1, \ldots, d_k \\ e_1, \ldots, e_k \\ [d_m, e_m] \in E'_{r-h}}} \frac{R^2}{\phi(U)\prod_{i =1}^m \phi([d_i, e_i])}\Big)\\
&\ll \lambda_{\max}^2N^{1-\eta+\eps} + \lambda_{\max}^2N^{\theta-2\epsilon+\eps},
\end{align*}
which is negligible.

Now that we have removed the coprimality condition from the inner sum of \eqref{realmain}, we can rearrange and express our new sum \eqref{ughmain} in terms of $\beta_1$. For a fixed choice of $d_1, \ldots, d_k, e_1, \ldots, e_k$, consider $t$ in the range $\frac{N+h_m}{[d_m,e_m]} \le t < \frac{2N+h_m}{[d_m,e_m]}$ such that $\beta_r(t) = 1$. If the prime factors of $t$ that are less than or equal to $N^{\half}$ are all distinct from the prime factors of $[d_m,e_m]$, we can make a change of variables that divides $t$ by these prime factors to obtain its largest prime factor $u$, obtaining the sum
\begin{equation}
\sideset{}{'}\sum_{\substack{d_1, \ldots, d_k \\ e_1, \ldots, e_k}} \frac{\lambda_{d_1, \ldots, d_k}\lambda_{e_1, \ldots, e_k}}{\phi(U)\prod_{i \neq m} \phi([d_i, e_i])} \sum_{\substack{q = p_1 \cdots p_{r-1} < N^{\half} \\ N^\eta \le p_1 < \cdots < p_{r-1} \\ d_m, e_m \mid q\\ [d_m, e_m] \in E'_{r-h}} } \sum_{\frac{N+h_m}{q} \le u < \frac{2N+h_m}{q}} \beta_1(u). \label{reduced}
\end{equation}
We can bound the error resulting from repeated prime factors dividing both $[d_m,e_m]$ and $t$ as follows:
\begin{align*}
\Biggl\lvert &\sideset{}{'}\sum_{\substack{d_1, \ldots, d_k \\ e_1, \ldots, e_k \\ [d_m, e_m] \in E'_{r-h}}} \frac{\lambda_{d_1, \ldots, d_k}\lambda_{e_1, \ldots, e_k}}{\phi(U)\prod_{i \neq m} \phi([d_i, e_i])} \sum_{\frac{N+h_m}{[d_m, e_m]} \le t < \frac{2N+h_m}{[d_m, e_m]}} \beta_r(t)\\
&- \sideset{}{'}\sum_{\substack{d_1, \ldots, d_k \\ e_1, \ldots, e_k}} \frac{\lambda_{d_1, \ldots, d_k}\lambda_{e_1, \ldots, e_k}}{\phi(U)\prod_{i \neq m} \phi([d_i, e_i])} \sum_{\substack{q = p_1 \cdots p_{r-1} < N^{\half} \\ N^\eta \le p_1 < \cdots < p_{r-1} \\ d_m, e_m \mid q\\ [d_m, e_m] \in E'_{r-h}} } \sum_{\frac{N+h_m}{q} \le u < \frac{2N+h_m}{q}} \beta_1(u) \Biggr\rvert \\
&\le \sideset{}{'}\sum_{\substack{d_1, \ldots, d_k \\ e_1, \ldots, e_k \\ [d_m, e_m] \in E'_{r-h}}} \frac{\abs{\lambda_{d_1, \ldots, d_k}\lambda_{e_1, \ldots, e_k}}}{\phi(U)\prod_{i \neq m} \phi([d_i, e_i])} \sum_{p \mid [d_m,e_m]} \sum_{\frac{N+h_m}{p[d_m, e_m]} \le t < \frac{2N+h_m}{p[d_m, e_m]}}1 \\
&\le \sideset{}{'}\sum_{\substack{d_1, \ldots, d_k \\ e_1, \ldots, e_k \\ [d_m, e_m] \in E'_{r-h}}} \frac{\abs{\lambda_{d_1, \ldots, d_k}\lambda_{e_1, \ldots, e_k}}}{\phi(U)\prod_{i \neq m} \phi([d_i, e_i])}(r-h) \Big( \frac{N}{N^\eta[d_m,e_m]}+O(1)\Big)
\end{align*}
Just as before, the error term resulting from $\frac{N}{N^\eta[d_m,e_m]}$ is $\ll \lambda_{\max}^2N^{1-\eta+\eps}$, and the error term resulting from the $O(1)$ is $\ll \lambda_{\max}^2 R^2N^{\eps}$.

Summing \eqref{reduced} and the dominating error term of $O(\lambda_{\max}^2 \frac{N}{(\log N)^A})$ across all $h$, we obtain
{\small \begin{equation}
S_2^{(m)} = \sideset{}{'}\sum_{\substack{d_1, \ldots, d_k \\ e_1, \ldots, e_k}} \frac{\lambda_{d_1, \ldots, d_k}\lambda_{e_1, \ldots, e_k}}{\phi(U)\prod_{i \neq m} \phi([d_i, e_i])} \sum_{\substack{q = p_1 \cdots p_{r-1} < N^{\half} \\ N^\eta \le p_1 < \cdots < p_{r-1} \\ d_m, e_m \mid q} } \sum_{\frac{N+h_m}{q} \le v < \frac{2N+h_m}{q}} \beta_1(v)  + O\left(\frac{N\lambda_{\max}^2}{(\log N)^A}\right). \label{finale}
\end{equation}}
Since 
\[\sum_{\frac{N+h_m}{q} \le v < \frac{2N+h_m}{q}} \beta_1(v) = \sum_{\frac{N}{q} \le v < \frac{2N}{q}} \beta_1(v) + O\left(\frac{1}{q}\right)
\]
and
\begin{align*}
\sideset{}{'}\sum_{\substack{d_1, \ldots, d_k \\ e_1, \ldots, e_k}}&\frac{\lambda_{d_1, \ldots, d_k}\lambda_{e_1, \ldots, e_k}}{\phi(U)\prod_{i \neq m} \phi([d_i, e_i])} \sum_{\substack{q = p_1 \cdots p_{r-1} < N^{\half} \\ N^\eta \le p_1 < \cdots < p_{r-1} \\ d_m, e_m \mid q} } O\left(\frac{1}{q}\right) \\
&\le \sideset{}{'}\sum_{\substack{d_1, \ldots, d_k \\ e_1, \ldots, e_k}}\frac{\lambda_{d_1, \ldots, d_k}\lambda_{e_1, \ldots, e_k}}{\phi(U)\prod_{i \neq m} \phi([d_i, e_i])} O(1) \ll \lambda_{\max}^2 R^2 N^{\eps} \ll \lambda_{\max}^2\frac{N}{(\log N)^A},
\end{align*}
we can replace the sum in \eqref{finale} with
\[
\frac{1}{\phi(U)}\sideset{}{^\dagger}\sum_{q} X_{N/q} \sideset{}{'}\sum_{\substack{d_1, \ldots, d_k \\ e_1, \ldots, e_k \\ d_m, e_m \mid q}}\frac{\lambda_{d_1, \ldots, d_k}\lambda_{e_1, \ldots, e_k}}{\prod_{i \neq m} \phi([d_i, e_i])} = \frac{\phi(B)}{\phi(W)}\sideset{}{^\dagger}\sum_{q} X_{N/q} \sideset{}{'}\sum_{\substack{d_1, \ldots, d_k \\ e_1, \ldots, e_k \\ d_m, e_m \mid q}}\frac{\lambda_{d_1, \ldots, d_k}\lambda_{e_1, \ldots, e_k}}{\prod_{i \neq m} \phi([d_i, e_i])}.
\] 
\end{proof}

\section{Combinatorial Sieve Manipulations}\label{sec:combinatorial}

In this section, we will only consider the innermost sum of \eqref{ninja}, i.e. \[ \sideset{}{'}\sum_{\substack{d_1, \ldots, d_k \\ e_1, \ldots, e_k \\ d_m, e_m \mid q}}\frac{\lambda_{d_1, \ldots, d_k}\lambda_{e_1, \ldots, e_k}}{\prod_{i \neq m} \phi([d_i, e_i])}. \] The manipulation done here will be similar to that of \cite[Lemmas 5.2, 5.3]{maynard}.

\begin{lemma}
Define 
\begin{equation} \label{yeezy} y_{r_1, \ldots, r_m}^{(m, q)} \defeq \prod_{i \neq m} \mu(r_i)g(r_i) \sum_{\substack{d_1, \ldots, d_k \\ r_i \mid d_i \\ d_m \mid q}} \frac{\lambda_{d_1, \ldots, d_k}}{\prod_{i \neq m} \phi(d_i)}, \end{equation}
where $g$ denotes the totally multiplicative function defined on the primes by $g(\ell) = \ell-2$. Then, we have
\[ \sideset{}{'}\sum_{\substack{d_1, \ldots, d_k \\ e_1, \ldots, e_k \\ d_m, e_m \mid q}}\frac{\lambda_{d_1, \ldots, d_k}\lambda_{e_1, \ldots, e_k}}{\prod_{i \neq m} \phi([d_i, e_i])} = \sum_{\substack{r_1, \ldots, r_k \\ r_m = 1}} \frac{(y_{r_1, \ldots, r_k}^{(m, q)})^2}{\prod_{i \neq m}g(r_i)} + O\left(\frac{(y^{(m, q)}_{\max})^2\phi(W)^{k-1} (\log R)^{k-1}}{W^{k-1}D_0} \right), \] where \[ y^{(m, q)}_{\max} = \max_{r_1, \ldots, r_k} |y_{r_1, \ldots, r_k}^{(m, q)}|. \]
\end{lemma}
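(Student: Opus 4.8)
The plan is to follow the standard Maynard--Tao change-of-variables, adapting it to the present situation where the $m$-th coordinate is constrained by the divisibility $d_m \mid q$ rather than appearing in a $\phi$-denominator. First I would invert the relation \eqref{yeezy} via M\"obius inversion in the variables $r_i$ for $i \neq m$. Since $g$ is totally multiplicative with $g(\ell) = \ell - 2$ and $\phi(\ell) = \ell - 1$, one has the identity $\frac{1}{\phi(d)} = \sum_{e \mid d} \frac{\mu^2(e) g(e)^{-1}}{?}$-type relation; more precisely, writing $\frac{\mu(r_i) g(r_i)}{\phi(r_i)} \cdot (\text{correction})$, one recovers
\[
\frac{\lambda_{d_1,\ldots,d_k}}{\prod_{i \neq m}\phi(d_i)} = \sum_{\substack{r_1,\ldots,r_k \\ d_i \mid r_i \\ r_m = 1}} \frac{\mu\big(\prod_{i\neq m} r_i/d_i\big)}{\prod_{i\neq m}\phi(r_i/d_i)} \cdot \frac{y^{(m,q)}_{r_1,\ldots,r_k}}{\prod_{i\neq m} g(r_i)} \cdot (\text{something like } \phi(d_i)/\ldots),
\]
so that $\lambda_{d_1,\ldots,d_k}$ is expressed in terms of the $y$'s. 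I would be careful that the support conditions on $\lambda$ (squarefree product, coprime to $W$, product $< R$) transfer to analogous conditions on the $y$'s.

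Next I would substitute this expression for each of $\lambda_{d_1,\ldots,d_k}$ and $\lambda_{e_1,\ldots,e_k}$ into the innermost sum $\sum' \frac{\lambda_{d}\lambda_{e}}{\prod_{i\neq m}\phi([d_i,e_i])}$ and interchange the order of summation, summing over $d_i, e_i$ (with $d_m, e_m \mid q$) for fixed $r_i, s_i$. The key arithmetic fact is that for each $i \neq m$ the local sum
\[
\sum_{\substack{d_i, e_i \\ r_i \mid d_i,\ s_i \mid e_i}} \frac{(\text{M\"obius factors})}{\phi([d_i,e_i])\phi(d_i/r_i)\phi(e_i/s_i)}
\]
vanishes unless $r_i = s_i$, in which case it evaluates (up to the coprimality restriction $'$ which forces $(r_i, r_j) = 1$) to $\frac{1}{g(r_i)}$ plus a controlled error; this is exactly the computation in \cite[Lemma 5.2]{maynard}, and the $d_m, e_m \mid q$ constraint does not interfere because those indices were already excluded from the product. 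This yields the main term $\sum_{r_m = 1} (y^{(m,q)}_{r})^2 / \prod_{i\neq m} g(r_i)$.

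The remaining work is the error analysis. The off-diagonal contributions (where the coprimality restriction $'$ fails, i.e. some prime $\ell \mid (r_i, r_j)$) must be bounded; following Maynard, one uses that such $\ell \geq D_0$, extracts a factor $1/D_0$, and bounds the rest by $(y^{(m,q)}_{\max})^2$ times a sum of $1/g$ over the remaining $k-2$ free variables, each contributing $\ll \phi(W)\log R / W$, giving the stated $O\big((y^{(m,q)}_{\max})^2 \phi(W)^{k-1}(\log R)^{k-1} / (W^{k-1} D_0)\big)$. I would also need to absorb the secondary error terms from the local evaluations (the "$+O(\cdot)$" when $r_i = s_i$) into this same bound, which works because each such term again saves a factor of $D_0$ or more. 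The main obstacle, and the step demanding the most care, is tracking the exact shape of the inversion formula and verifying that the $d_m, e_m \mid q$ restriction genuinely decouples from the M\"obius computation on the other coordinates — once that is in hand, the rest is a routine adaptation of \cite[Lemmas 5.2, 5.3]{maynard}.
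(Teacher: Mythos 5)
Your overall strategy---invert the transform, substitute, and diagonalize---is not the route the paper takes, and as written it has a genuine gap at its very first step. The transform \eqref{yeezy} is not invertible: the $m$-th coordinate of $\lambda_{d_1,\ldots,d_k}$ is summed out over all $d_m$ with $r_m \mid d_m \mid q$, so the map $\lambda \mapsto y^{(m,q)}$ loses the dependence on $d_m$, and there is no formula expressing the individual $\lambda_{d_1,\ldots,d_k}$ (which you need on the left-hand side, separately for each $d_m \mid q$) in terms of the $y^{(m,q)}$ alone; your displayed inversion, which sums only over tuples with $r_m=1$, cannot recover this information. The paper avoids inversion entirely: it expands $1/\phi([d_i,e_i])=\sum_{u_i\mid (d_i,e_i)} g(u_i)/(\phi(d_i)\phi(e_i))$ for $i\neq m$, removes the cross conditions $(d_i,e_j)=1$ with auxiliary M\"obius variables $s_{i,j}$, and then recognizes the resulting decoupled inner $d$- and $e$-sums as literally the definition \eqref{yeezy} evaluated at $a_j=u_j\prod_{i\neq j}s_{j,i}$ and $b_j=u_j\prod_{i\neq j}s_{i,j}$. (This forward substitution is also what Maynard's Lemma 5.2 actually does; the inversion formula enters only in the next lemma, which relates $y^{(m,q)}$ to $y$.)

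A second, related gap: your claim that the $d_m,e_m\mid q$ constraint ``does not interfere because those indices were already excluded from the product'' overlooks the primed restriction on the sum, which demands that $[d_m,e_m]$ be coprime to every $[d_j,e_j]$ with $j\neq m$. The conditions $(d_m,e_j)=1$ and $(d_j,e_m)=1$ genuinely couple the $m$-th coordinates to the others and are not implied by the support of $\lambda$. They must be removed with the variables $s_{m,j}$ and $s_{i,m}$, whose nontrivial contribution the paper bounds separately using the fact that any such common divisor divides $q$ and is therefore at least $N^{\eta}$, yielding an error $\ll (y^{(m,q)}_{\max})^2\phi(W)^{k-1}(\log R)^{k-1}/(W^{k-1}N^{\eta})$. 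Your off-diagonal error analysis (common factors exceeding $D_0$) is correct in spirit for the coordinates $i,j\neq m$, but it does not cover this case, so even after repairing the first step the error estimate would be incomplete.
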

Note that $y_{r_1, \ldots, r_k}^{(m, q)}$ is supported on $(r_1, \ldots, r_k)$ such that $r_m \mid q.$ 

\begin{proof}
For squarefree $d_i$ and $e_i$ we have the identity \[ \frac{1}{\phi([d_i, e_i])} = \frac{1}{\phi(d_i)\phi(e_i)}\sum_{u_i \mid d_i, e_i} g(u_i). \] Therefore, \[ \sideset{}{'}\sum_{\substack{d_1, \ldots, d_k \\ e_1, \ldots, e_k \\ d_m, e_m \mid q}}\frac{\lambda_{d_1, \ldots, d_k}\lambda_{e_1, \ldots, e_k}}{\prod_{i \neq m} \phi([d_i, e_i])} = \sideset{}{'}\sum_{\substack{d_1, \ldots, d_k \\ e_1, \ldots, e_k \\ d_m, e_m \mid q}}\frac{\lambda_{d_1, \ldots, d_k}\lambda_{e_1, \ldots, e_k}}{\prod_{i \neq m} \phi(d_i)\phi(e_i)} \sum_{\substack{u_1, \ldots, u_k \\ u_i \mid d_i, e_i \\ u_m = 1}} \prod_{i \neq m} g(u_i). \] Swapping the order of summation between the $u$ and $d$ variables gives that the previous quantity is equal to \[ \sum_{\substack{u_1, \ldots, u_k \\ u_m = 1}} \prod_{i \neq m} g(u_i) \sideset{}{'}\sum_{\substack{d_1, \ldots, d_k \\ e_1, \ldots, e_k \\ u_i \mid d_i, e_i \\ d_m, e_m \mid q}}\frac{\lambda_{d_1, \ldots, d_k}\lambda_{e_1, \ldots, e_k}}{\prod_{i \neq m} \phi(d_i)\phi(e_i)}. \]

Let us now consider the conditions that we have imposed from the $'$ in the second summation. We required that $(W, d_i) = 1, (W, e_i) = 1$ and $(d_i, e_j) = 1$ for $i \neq j$. Note that the first two conditions can be removed due to the restriction of the support of $\lambda_{d_1,\ldots,d_k}$. We remove the last condition $(d_i, e_j) = 1$ by introducing variables $s_{i, j}$ for $i \neq j$. Since \[ \sum_{x \mid d_i, e_j} \mu(x) = 1 \] if $(d_i, e_j) = 1$ and the quantity is $0$ otherwise, the previous sum can be rewritten as \[ \sum_{\substack{u_1, \ldots, u_k \\ u_m = 1}} \prod_{i \neq m} g(u_i) \sum_{\substack{d_1, \ldots, d_k \\ e_1, \ldots, e_k \\ u_i \mid d_i, e_i \\ d_m, e_m \mid q}}\frac{\lambda_{d_1, \ldots, d_k}\lambda_{e_1, \ldots, e_k}}{\prod_{i \neq m} \phi(d_i)\phi(e_i)} \sum_{\substack{s_{1, 2}, \ldots, s_{k, k-1} \\ s_{i, j} \mid d_i, e_j}} \prod_{\substack{1 \le i, j \le k \\ i \neq j}} \mu(s_{i, j}). \] Interchanging the order of summation between the $s$ and $d$ variables gives us \[ \sum_{\substack{u_1, \ldots, u_k \\ u_m = 1}} \prod_{i \neq m} g(u_i) \sum_{\substack{s_{1, 2}, \ldots, s_{k, k-1}}} \prod_{\substack{1 \le i, j \le k \\ i \neq j}} \mu(s_{i, j}) \sum_{\substack{d_1, \ldots, d_k \\ e_1, \ldots, e_k \\ u_i \mid d_i, e_i \\ s_{i, j} \mid d_i, e_j \\ d_m, e_m \mid q}}\frac{\lambda_{d_1, \ldots, d_k}\lambda_{e_1, \ldots, e_k}}{\prod_{i \neq m} \phi(d_i)\phi(e_i)}. \] Note that in the previous sum, we can restrict to the terms where $(s_{i, a}, s_{i, j}) = 1$ for any $a \neq j$. This is because $(s_{i, a}, s_{i, j}) \mid (e_a, e_j)$, and $\lambda_{e_1, \ldots, e_k}$ is supported on $(e_1,\ldots,e_k)$ such that the $e_i$ are pairwise relatively prime. Similarly, we can assume that $(s_{a, j}, s_{i, j}) = 1, (s_{i, j}, u_i) = 1$, and $(s_{i, j}, u_j) = 1.$ We will denote these restrictions by $\sum^{\star}$. As we have separated the $d$ and $e$ variables completely, we can substitute in the $y^{(m, q)}$ variables to rewrite the previous expression as \[ \sum_{\substack{u_1, \ldots, u_k \\ u_m = 1}} \prod_{i \neq m} g(u_i) \sideset{}{^\star}\sum_{\substack{s_{1, 2}, \ldots, s_{k, k-1}}} \prod_{\substack{1 \le i, j \le k \\ i \neq j}} \mu(s_{i, j}) \left(\prod_{i = 1}^k \frac{\mu(a_i)\mu(b_i)}{g(a_i)g(b_i)} \right) y_{a_1, \ldots, a_k}^{(m, q)}y_{b_1, \ldots, b_k}^{(m, q)}, \] where 
\[
a_j = u_j \prod_{i\neq j}s_{j, i}\textnormal{ and } b_j = u_j \prod_{i\neq j} s_{i, j}.
\]
Now, we use the fact that $s_{i, j}$ is relatively prime with $s_{i, a}, s_{b, j}, u_i,$ and $u_j$ in order to write $\mu(a_j) = \mu(u_j)\prod_{i \neq j}\mu(s_{j, i})$, and similar expressions for $\mu(b_j), g(a_j),$ and $g(b_j).$ Substituting these into the previous expression, we have \[ \sum_{\substack{u_1, \ldots, u_k \\ u_m = 1}} \Big(\prod_{i \neq m} \frac{\mu(u_i)^2}{g(u_i)} \Big) \sideset{}{^\star}\sum_{\substack{s_{1, 2}, \ldots, s_{k, k-1}}} \Big(\prod_{\substack{1 \le i, j \le k \\ i \neq j \\ i, j \neq m}} \frac{\mu(s_{i, j})}{g(s_{i, j})^2}\Big) \Big(\prod_{\substack{i = m \text{ or } \\ j = m}} \frac{1}{g(s_{i, j})}\Big) y_{a_1, \ldots, a_k}^{(m, q)}y_{b_1, \ldots, b_k}^{(m, q)}. \] 

We now bound the contribution from the terms with $s_{i, j} > 1$ in the previous expression. Since $s_{i, j} \mid a_i$, we have $y^{(m, q)}_{a_1, \ldots, a_k} = 0$ for $1 < s_{i, j} \le D_0.$ Therefore, we may assume that $s_{i, j} = 1$ or $s_{i, j} > D_0.$ If $i = m$ or $j = m$, then $s_{i, j} = 1$ or $s_{i, j} \ge N^\eta$, as $s_{m, j}$ divides $d_m$, which divides $q$. Thus, $s_{m, j}$ would only have factors of size at least $N^\eta.$ First, let us consider the case where $s_{i, j} > D_0$, and $i \neq m, j \neq m.$ The contribution from these terms is bounded by \[ (y_{\max}^{(m, q)})^2 \Big(\sum_{\substack{(u, W) = 1 \\ u < R}} \frac{1}{g(u)}\Big)^{k-1}\Big(\sum_{s \ge 1} \frac{1}{g(s)^2} \Big)^{k^2-3k+1} \Big(\sum_{s_{i, j} > D_0} \frac{1}{g(s_{i, j})^2} \Big)\Big(\sum_{t \mid q} \frac{1}{g(t)}\Big)^{2k-2} \] \[ \ll \frac{(y_{\max}^{(m, q)})^2 \phi(W)^{k-1}(\log R)^{k-1}}{W^{k-1}D_0}. \] Here we have used the fact that $q$ has $2^{r-1}$ factors, and thus \[ \sum_{t \mid q} \frac{1}{g(t)} \le 2^{r-1}. \] The contribution from the terms with $s_{i, j} > 1$, where $i = m$ or $j = m$, is bounded by \[ (y_{\max}^{(m, q)})^2 \Big(\sum_{\substack{(u, W) = 1 \\ u < R}} \frac{1}{g(u)}\Big)^{k-1}\Big(\sum_{s \ge 1} \frac{1}{g(s)^2} \Big)^{k^2-3k+2} \Big(\sum_{t \mid q} \frac{1}{g(t)}\Big)^{2k-3} \Big(\sum_{\substack{s_{i, j} \mid q \\ s_{i, j} > 1}} \frac{1}{g(s_{i, j})} \Big) \] \[ \ll \frac{(y_{\max}^{(m, q)})^2 \phi(W)^{k-1}(\log R)^{k-1}}{W^{k-1}N^\eta}. \] We have additionally used the fact that \[ \Big(\sum_{\substack{s_{i, j} \mid q \\ s_{i, j} > 1}} \frac{1}{g(s_{i, j})} \Big) \le \frac{2^{r-1}}{N^\eta - 2}, \] as all prime factors of $q$ are at least $N^\eta.$ The first error term dominates. By simplifying the main term, where $s_{i, j} = 1$ for all $i, j$, we see that \[ \sideset{}{'}\sum_{\substack{d_1, \ldots, d_k \\ e_1, \ldots, e_k \\ d_m, e_m \mid q}}\frac{\lambda_{d_1, \ldots, d_k}\lambda_{e_1, \ldots, e_k}}{\prod_{i \neq m} \phi([d_i, e_i])} = \sum_{\substack{u_1, \ldots, u_k \\ u_m = 1}} \frac{(y_{u_1, \ldots, u_k}^{(m, q)})^2}{\prod_{i \neq m}g(r_i)} + O\Big(\frac{(y^{(m, q)}_{\max})^2\phi(W)^{k-1} (\log R)^{k-1}}{W^{k-1}D_0} \Big). \]
\end{proof}

In order to relate $S_2^{(m)}$ to $S_1$, we now express the $y^{(m, q)}$ variables in terms of the $y$ variables.

\begin{lemma}
\label{ymq}
If $r_m \mid q$, then \[ y_{r_1, \ldots, r_k}^{(m, q)} = \sum_{r_m \mid a_m} \frac{y_{r_1, \ldots, r_{m-1}, a_m, r_{m+1}, \ldots, r_k}}{\phi(a_m)}\mu((a_m, q))\phi((a_m, q)) + O\Big(\frac{y_{\max}\phi(W)\log R}{WD_0} \Big). \] In particular, for $r_m = 1$, the previous expression can be written as \[ y_{r_1, \ldots, r_k}^{(m, q)} = \sum_{d \mid q} \mu(d)d \sum_{d \mid a_m} \frac{y_{r_1, r_{m-1}, a_m, r_{m+1}, \ldots, r_k}}{\phi(a_m)} + O\Big(\frac{y_{\max}\phi(W)\log R}{WD_0} \Big).\]
\end{lemma}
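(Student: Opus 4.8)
The plan is to follow the template of \cite[Lemma 5.3]{maynard}, inserting the divisibility constraint $d_m\mid q$ at the $m$-th coordinate and reading off the weight it produces. First I would unfold the definition \eqref{yeezy} of $y^{(m,q)}_{r_1,\ldots,r_k}$, where the sum runs over $d_1,\ldots,d_k$ with $r_i\mid d_i$ for every $i$ (so in particular $r_m\mid d_m$, which is why $y^{(m,q)}$ is supported on $r_m\mid q$) and with $d_m\mid q$, and substitute the change of variables from \cite{thorner} that expresses each $\lambda_{d_1,\ldots,d_k}$ as a finite linear combination of the $y$-variables, of the shape $\lambda_{d_1,\ldots,d_k}=\big(\prod_i\mu(d_i)\phi(d_i)\big)\sum_{d_i\mid a_i}y_{a_1,\ldots,a_k}/\prod_i g(a_i)$. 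The point of this substitution is that it completely separates the $d$-variables, so after interchanging the order of summation to bring the sum over $d_1,\ldots,d_k$ innermost, that sum factors into a product of one one-dimensional sum per coordinate.

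Next I would evaluate those coordinate sums. For each $i\neq m$ the factor is $\sum_{r_i\mid d_i\mid a_i}\mu(d_i)$, which equals $\mu(r_i)$ when $a_i=r_i$ and vanishes otherwise; this forces $a_i=r_i$ for all $i\neq m$, collapses the outer sum over the $y$-variables to a sum over $a_m$ alone, and cancels the prefactor $\prod_{i\neq m}\mu(r_i)g(r_i)$ against the surviving $\prod_{i\neq m}\mu(r_i)$ and the $\prod_{i\neq m}g(r_i)$ in the denominator. At the coordinate $m$ the factor is $\sum_{r_m\mid d_m\mid(q,a_m)}\mu(d_m)\phi(d_m)$: it is empty unless $r_m\mid a_m$ (using $r_m\mid q$), and otherwise, since $q$ — hence $(q,a_m)$ — is squarefree (every prime factor of $q$ being one of the $p_j\ge N^\eta$), it evaluates in closed form. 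Collecting these gives, with no error yet (the $\lambda\leftrightarrow y$ recovery being exact),
\[
y^{(m,q)}_{r_1,\ldots,r_k}=\sum_{r_m\mid a_m}\frac{y_{r_1,\ldots,r_{m-1},a_m,r_{m+1},\ldots,r_k}}{g(a_m)}\;\mu(r_m)\phi(r_m)\,\mu\!\big((q,a_m)/r_m\big)\,g\!\big((q,a_m)/r_m\big).
\]

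Finally I would convert every $g$ into the corresponding $\phi$. The quantity $(q,a_m)/r_m$ has only prime factors $\ge N^\eta$, so replacing $g\big((q,a_m)/r_m\big)$ by $\phi\big((q,a_m)/r_m\big)$ costs only $O(N^{-\eta})$, after which $\mu(r_m)\phi(r_m)\,\mu\big((q,a_m)/r_m\big)\phi\big((q,a_m)/r_m\big)=\mu((a_m,q))\phi((a_m,q))$ because $r_m\mid(a_m,q)$. For the $g(a_m)$ in the denominator, the support condition $(a_m,W)=1$ forces every prime factor of $a_m$ to exceed $D_0$, so $1/g(a_m)-1/\phi(a_m)=\phi(a_m)^{-1}\big(\prod_{p\mid a_m}(1+(p-2)^{-1})-1\big)$; expanding the product, factoring out a squarefree $c\mid a_m$ with $c>1$, and summing $y_{\max}/\phi(a_m)$ over $a_m<R$ coprime to $W$ bounds the resulting correction by $O\big(y_{\max}\,\tfrac{\phi(W)}{W}\log R\sum_{p>D_0}p^{-2}\big)=O\big(\tfrac{y_{\max}\phi(W)\log R}{WD_0}\big)$, exactly the claimed error. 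The second displayed identity then follows from the first by setting $r_m=1$ and rewriting $\mu((a_m,q))\phi((a_m,q))=\sum_{d\mid(a_m,q)}\mu(d)d=\sum_{d\mid q}\mu(d)d\,[\,d\mid a_m\,]$ (valid since $q$ is squarefree) and swapping the $d$- and $a_m$-sums. The main obstacle is precisely this last conversion: checking that the cumulative $g\to\phi$ correction on $a_m$ is genuinely $O\big(\tfrac{y_{\max}\phi(W)\log R}{WD_0}\big)$ rather than, say, larger by a factor of $\log\log R$, which relies on every prime factor of $a_m$ exceeding $D_0$ and on correctly carrying the constraint $r_m\mid a_m$ and the support of the $y$-variables through the collapse of the coordinates $i\neq m$.
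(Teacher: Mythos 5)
Your overall architecture (substitute the $\lambda\leftrightarrow y$ inversion into \eqref{yeezy}, interchange sums, evaluate one coordinate sum per index, then clean up) is the same as the paper's, but the inversion formula you start from is not the one this paper uses, and the discrepancy is not cosmetic. The paper's $y$-variables are tied to Maynard's Equation 5.8, i.e.
\[
\lambda_{d_1,\ldots,d_k}=\Big(\prod_{i=1}^k\mu(d_i)d_i\Big)\sum_{\substack{a_1,\ldots,a_k\\ d_i\mid a_i}}\frac{y_{a_1,\ldots,a_k}}{\prod_{i=1}^k\phi(a_i)},
\]
whereas you substitute $\lambda_{d}=\big(\prod_i\mu(d_i)\phi(d_i)\big)\sum_{d_i\mid a_i}y_a/\prod_i g(a_i)$. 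That formula is the inversion of a \emph{different} set of variables (a ``$g$-weighted'' $\tilde y$, which agrees with $y$ only up to $O(y_{\max}/D_0)$ errors), so what your computation actually produces is the lemma with $\tilde y$ in place of $y$; to conclude you would still need to prove $\tilde y_a=y_a+O(y_{\max}/D_0)$, which is itself a Maynard-Lemma-5.2-type computation that you never carry out.

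More importantly, the wrong inversion makes the heart of the proof disappear. With your formula the coordinate sums for $i\neq m$ are pure M\"obius sums $\sum_{r_i\mid d_i\mid a_i}\mu(d_i)$, which collapse exactly onto $a_i=r_i$, and you are left believing the only error source is the $g\to\phi$ conversion at the $m$-th coordinate. With the correct inversion the coordinate sums are $\sum_{r_i\mid d_i\mid a_i}\mu(d_i)d_i/\phi(d_i)=\mu(a_i)r_i/\phi(a_i)$, which do \emph{not} vanish for $a_i\neq r_i$; the stated error term $O\big(y_{\max}\phi(W)\log R/(WD_0)\big)$ arises precisely from bounding these off-diagonal terms (which satisfy $a_j>D_0r_j$ by the support condition on $y$), together with the normalizations $r_m/\phi(r_m)=1+O(N^{-\eta})$ and $\prod_{i\neq m}g(r_i)r_i/\phi(r_i)^2=1+O(D_0^{-1})$. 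Your proposal never confronts this off-diagonal contribution, so the step ``this forces $a_i=r_i$ for all $i\neq m$, with no error yet'' is where the argument breaks. The $m$-th coordinate evaluation, the identity $\sum_{d\mid(a_m,q)}\mu(d)d=\mu((a_m,q))\phi((a_m,q))$, and your $g(a_m)\to\phi(a_m)$ error estimate are all fine, but they are not the missing piece.
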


\begin{proof}
As stated in \cite[Equation 5.8]{maynard}, we have the inversion formula $\lambda_{d_1,\ldots,d_k}$ in terms of the variables $y_{r_1,\ldots,r_k}$: \[ \lambda_{d_1, \ldots, d_k} = \prod_{i = 1}^m \mu(d_i)d_i \sum_{\substack{r_1, \ldots, r_k \\ d_i \mid r_i}} \frac{y_{r_1, \ldots, r_k}}{\prod_{i = 1}^m \phi(r_i)}. \] Substituting this expression into \eqref{yeezy} gives us \[ y_{r_1, \ldots, r_k}^{(m, q)} = \prod_{i \neq m} \mu(r_i)g(r_i) \sum_{\substack{r_i \mid d_i \\ d_m \mid q}} \frac{\prod_{i = 1}^m\mu(d_i)d_i}{\prod_{i \neq m} \phi(d_i)} \sum_{d_i \mid a_i} \frac{y_{a_1, \ldots, a_k}}{\prod_{i = 1}^m \phi(a_i)}. \] We now swap the order of summation between the $d$ and $a$ variables to give \[ y_{r_1, \ldots, r_k}^{(m, q)} = \prod_{i \neq m} \mu(r_i)g(r_i) \sum_{r_i \mid a_i}\frac{y_{a_1, \ldots, a_k}}{\prod_{i = 1}^m \phi(a_i)} \Big(\prod_{i \neq m} \sum_{\substack{d_i \mid a_i \\ r_i \mid d_i}} \frac{\mu(d_i)d_i}{\phi(d_i)}\Big)\Big(\sum_{\substack{d_m \mid (a_m, q) \\ r_m \mid d_m}} \mu(d_i)d_i\Big).\] We can explicitly compute the innermost sum: for $i \neq m$, we have \[ \sum_{\substack{d_i \mid a_i \\ r_i \mid d_i}} \frac{\mu(d_i)d_i}{\phi(d_i)} = \frac{\mu(a_i)r_i}{\phi(a_i)}, \] and for $i = m$, we have \[ \sum_{\substack{d_m \mid (a_m, q) \\ r_m \mid d_m}} \mu(d_m)d_m = \mu((a_m, q))\phi((a_m, q))\frac{r_m}{\phi(r_m)}. \]
Substituting these into the previous expression gives us \begin{align*} 
y_{r_1, \ldots, r_k}^{(m, q)} &= \frac{r_m}{\phi(r_m)}\prod_{i \neq m}\mu(r_i)g(r_i)\sum_{r_i \mid a_i} \frac{y_{a_1, \ldots, a_k}}{\prod_{i=1}^m \phi(a_i)} \Big(\prod_{i \neq m} \frac{\mu(a_i)r_i}{\phi(a_i)}\Big) \mu((a_m, q))\phi((a_m, q)) \\ &= \frac{r_m}{\phi(r_m)}\prod_{i \neq m} \mu(r_i)g(r_i)r_i \sum_{r_i \mid a_i} \frac{y_{a_1, \ldots, a_k}}{\phi(a_m)}\mu((a_m, q))\phi((a_m, q)) \prod_{i \neq m} \frac{\mu(a_i)}{\phi(a_i)^2}
\end{align*} We now claim that for $i \neq m$, the contribution from the terms with $a_i \neq r_i$ is small. Note that we must have $a_i > D_0r_i$, as $y_{a_1, \ldots, a_k}$ is only supported when $(a_i, W) > 1.$ Suppose that $a_j > D_0r_j.$ The contribution from these terms is \begin{align*} &\ll \frac{y_{\max}r_m}{\phi(r_m)} \prod_{i \neq m} g(r_i)r_i \Big(\sum_{\substack{r_j \mid a_j \\ a_j > D_0r_j}} \frac{1}{\phi(a_i)^2} \Big) \prod_{\substack{i \neq m, j}} \Big(\sum_{r_i \mid a_i} \frac{1}{\phi(a_i)^2} \Big) \sum_{\substack{r_m \mid a_m \\ (a_m, W) = 1 \\ a_m < R}} \frac{\phi((a_m, q))}{\phi(a_m)} \\ &\ll \frac{y_{\max}r_m}{D_0\phi(r_m)} \prod_{i \neq m} \Big(\frac{g(r_i)r_i}{\phi(r_i)^2}\Big) \Big(\sum_{d \mid q}\sum_{\substack{d \mid a_m \\ (a_m, W) = 1 \\ a_m < R}} \frac{\phi(d)}{\phi(a_m)}\Big) \\ &\ll \frac{y_{\max}r_m}{D_0\phi(r_m)} \sum_{d \mid q} \frac{\phi(W) \log R}{W} \ll \frac{y_{\max} \phi(W) \log R}{WD_0}. \end{align*}
The last inequality holds because $q$ has at most $2^r$ factors and $r_m \mid q$, and thus \[ 1 \le \frac{r_m}{\phi(r_m)} \le \Big(\frac{N^\eta}{N^\eta-1}\Big)^r = 1 + O(N^{-\eta}), \]
Therefore, \[ y_{r_1, \ldots, r_k}^{(m, q)} = \frac{r_m}{\phi(r_m)}\prod_{i \neq m} \frac{g(r_i)r_i}{\phi(r_i)^2} \sum_{r_m \mid a_m} \frac{y_{r_1, \ldots, r_{m-1}, a_m, r_{m+1}, \ldots, r_k}}{\phi(a_m)}\mu((a_m, q))\phi((a_m, q)) + O\Big(\frac{y_{\max}\phi(W) \log R}{WD_0} \Big). \]
We conclude that \[ \frac{r_m}{\phi(r_m)}\prod_{i \neq m} \frac{g(r_i)r_i}{\phi(r_i)^2} \sum_{r_m \mid a_m} \frac{y_{r_1, \ldots, r_{m-1}, a_m, r_{m+1}, \ldots, r_k}}{\phi(a_m)}\mu((a_m, q))\phi((a_m, q)) \] \[ = \prod_{i \neq m} \frac{g(r_i)r_i}{\phi(r_i)^2} \sum_{r_m \mid a_m} \frac{y_{r_1, \ldots, r_{m-1}, a_m, r_{m+1}, \ldots, r_k}}{\phi(a_m)}\mu((a_m, q))\phi((a_m, q)) + O\Big(\frac{y_{\max}\phi(W) \log R}{WN^\eta} \Big), \] where the bounding is done analogously as above. To finish the proof, note that $g(\ell)\ell/\phi(\ell)^2 = 1 + O(\ell^{-2}).$ This gives that \[\prod_{i \neq m} \frac{g(r_i)r_i}{\phi(r_i)^2} = 1 + O(D_0^{-1}). \] Thus, our final expression is \[ y_{r_1, \ldots, r_k}^{(m, q)} = \sum_{r_m \mid a_m} \frac{y_{r_1, \ldots, r_{m-1}, a_m, r_{m+1}, \ldots, r_k}}{\phi(a_m)} \mu((a_m, q))\phi((a_m, q)) + O\Big(\frac{y_{\max} \phi(W)\log R}{WD_0}\Big), \] which completes the proof.
\end{proof}

\section{Smooth Choice of $y$}\label{sec:smoothy}
When $\mu^2(W\prod_{i = 1}^k r_i) = 0$, we have $y_{r_1, \ldots, r_k} = 0$. Otherwise, we will set \[ y_{r_1, \ldots, r_k} = F\Big(\frac{\log r_1}{\log R}, \ldots, \frac{\log r_k}{\log R}\Big) \] for some smooth function $F$ to be chosen in Section~\ref{sec:smoothf}. Note that this implies that $y_{\max} \le F_{\max}$. Additionally, by Lemma \ref{ymq}, we can show that \begin{align} y_{r_1, \ldots, r_k}^{(m, q)} &\ll y_{\max} \sum_{r_m \mid a_m} \frac{\phi((a_m, q))}{\phi(a_m)} \nonumber \\ &\ll y_{\max} \sum_{d \mid q} \sum_{\substack{d \mid a_m \\ (a_m, W) = 1 \\ a_m < R}} \frac{\phi(d)}{\phi(a_m)} \nonumber \\ &\ll \frac{y_{\max}\phi(W)\log R}{W} \ll \frac{F_{\max}\phi(W)\log R}{W}, \end{align} where the bounding is done as in Lemma \ref{ymq}.

We will use the following lemma to estimate the sum $S_2^{(m)}$ by an integral.
\begin{lemma}
\label{gpylemma}
Let $A_1, A_2, L > 0$, and let $\gamma$ be a multiplicative function satisfying \[ 0 \le \frac{\gamma(\ell)}{\ell} \le 1-A_1, \] \[ -L \le \sum_{w \le \ell \le z} \frac{\gamma(\ell)\log \ell}{\ell} - \log \frac{z}{w} \le A_2 \] for all $2 \le w \le z.$ Let $h$ be the totally multiplicative function defined on the primes by $h(\ell) = \gamma(\ell)/(\ell-\gamma(\ell)).$ Let $G$ be a smooth function from $[0, 1] \rightarrow \mathbb{R}$, and let $G_{\max} = \max_{t \in [0, 1]} |G(t)|+|G'(t)|.$ Then for any positive integer $n$, \[ \sum_{j < R/n} \mu^2(j)h(j)G\Big(\frac{\log jn}{\log R} \Big) = \mathfrak{S} \log \frac{R}{n} \int_0^1 G\Big(\frac{\log(R/n)}{\log R} \Big(\frac{\log n}{\log (R/n)} + x\Big)\Big) dx + O(\mathfrak{S}LG_{\max}), \] where the implied constant does not depend on $G$ or $L$, and \[ \mathfrak{S} \defeq \prod_\ell \Big(1 - \frac{\gamma(\ell)}{\ell} \Big)^{-1} \Big(1 - \frac{1}{\ell} \Big). \]
\end{lemma}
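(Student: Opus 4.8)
\emph{Strategy.} Lemma~\ref{gpylemma} is a smoothly weighted version of the Mertens-type estimate
\[
M(x)\defeq\sum_{j\le x}\mu^2(j)h(j)=\mathfrak{S}\log x+O(\mathfrak{S}L),
\]
with implied constant depending only on $A_1,A_2$. The plan is to first establish this unweighted estimate, then deduce the lemma from it by Abel summation against the smooth weight --- this is where the hypothesis that $G$ is smooth, hence that $|G|,|G'|\le G_{\max}$, enters --- and finally apply the changes of variable $t=R^u/n$ and $u=\frac{\log n}{\log R}+\frac{\log(R/n)}{\log R}x$ to put the resulting integral into the stated form.

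\emph{The unweighted estimate.} The hypotheses on $\gamma$ pass to $h$: since $0\le\gamma(\ell)/\ell\le 1-A_1$ and (taking $w=z=\ell$ in the second hypothesis) $\gamma(\ell)/\ell\le A_2/\log\ell$, one has $h(\ell)=\gamma(\ell)/\ell+O\bigl((\gamma(\ell)/\ell)^2\bigr)$, and partial summation from $\sum_{w\le\ell\le z}\gamma(\ell)\ell^{-1}\log\ell=\log(z/w)+O(L)$ yields both $\sum_{\ell\le z}h(\ell)\log\ell=\log z+O(1+L)$ and the convergence of $\sum_\ell(\gamma(\ell)-1)/\ell$, the latter ensuring that $\mathfrak{S}$ converges. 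Comparing the Euler product of $M$ with Mertens' product gives the one-sided bound $M(x)\le\prod_{\ell\le x}(1+h(\ell))=\prod_{\ell\le x}(1-\gamma(\ell)/\ell)^{-1}\ll\mathfrak{S}\log x$. For the exact constant I would use the convolution identity
\[
\sum_{j\le x}\mu^2(j)h(j)\log j=\sum_{\ell\le x}h(\ell)\log\ell\sum_{\substack{m\le x/\ell\\(m,\ell)=1}}\mu^2(m)h(m)
\]
(coming from $\log j=\sum_{\ell\mid j}\log\ell$ for squarefree $j$), together with the elementary inequality $0\le\sum_{j\le x}\mu^2(j)h(j)\log(x/j)=M(x)\log x-\sum_{j\le x}\mu^2(j)h(j)\log j$; feeding the upper bound for $M$ back into the identity and iterating is the standard linear-sieve bootstrap (a quantitative form of Wirsing's theorem) and pins down $M(x)=\mathfrak{S}\log x+O(\mathfrak{S}L)$. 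An alternative is to feed the Dirichlet series $\sum_j\mu^2(j)h(j)j^{-s}=\zeta(1+s)H(s)$, where $H(s)=\prod_\ell(1+h(\ell)\ell^{-s})(1-\ell^{-1-s})$ and $H(0)=\mathfrak{S}$, into a truncated Perron formula; here one must estimate on the line $\Re s=1/\log x$ rather than shift past $s=0$, since the averaged nature of the hypotheses does not guarantee that $H$ continues past $\Re s=0$.

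\emph{Passing to the weighted sum.} Put $y=R/n$ and $\Phi(t)=G\!\left(\tfrac{\log(tn)}{\log R}\right)$, so that $\Phi'(t)=\tfrac1{t\log R}G'\!\left(\tfrac{\log(tn)}{\log R}\right)$ and hence $\int_1^y|\Phi'(t)|\,dt\le\frac{G_{\max}}{\log R}\log y\le G_{\max}$. Abel summation gives
\[
\sum_{j<y}\mu^2(j)h(j)\Phi(j)=M(y^-)\Phi(y^-)-\int_1^y M(t)\Phi'(t)\,dt .
\]
Inserting $M(t)=\mathfrak{S}\log t+O(\mathfrak{S}L)$, the $O(\mathfrak{S}L)$ contributions are $O(\mathfrak{S}L\,G_{\max})$ (using $|\Phi|\le G_{\max}$ and the bound on $\int|\Phi'|$), while the $\mathfrak{S}\log t$ contributions, after integrating $\log t\cdot\Phi'(t)$ by parts, make the boundary terms cancel and leave exactly $\mathfrak{S}\int_1^y\Phi(t)\frac{dt}{t}$. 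The two substitutions above then convert $\mathfrak{S}\int_1^{R/n}G\!\left(\tfrac{\log(tn)}{\log R}\right)\frac{dt}{t}$ into $\mathfrak{S}\log\tfrac{R}{n}\int_0^1 G\!\left(\tfrac{\log(R/n)}{\log R}\bigl(\tfrac{\log n}{\log(R/n)}+x\bigr)\right)dx$, which is the claimed main term, and the lemma follows.

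\emph{Main obstacle.} The only substantive step is the unweighted estimate $M(x)=\mathfrak{S}\log x+O(\mathfrak{S}L)$ with error uniform in $L$: because the hypotheses control $\gamma$ only in the averaged sense $\sum_{w\le\ell\le z}\gamma(\ell)\ell^{-1}\log\ell=\log(z/w)+O(L)$, one cannot argue prime by prime and must instead propagate the $O(L)$ discrepancy through the Euler-product comparison and the convolution/bootstrap argument (or through the Perron estimate) while keeping the implied constant dependent only on $A_1,A_2$. The Abel summation and the two changes of variable in the last step are entirely routine.
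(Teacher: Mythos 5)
The paper does not actually prove this lemma: its entire ``proof'' is the citation ``This is stated in \cite[Lemma 3.5]{sono} as a direct consequence of \cite[Lemma 4]{gpylemma}.'' Your proposal instead reconstructs the standard proof of that cited result, and the route you take --- first the unweighted mean-value estimate $\sum_{j\le x}\mu^2(j)h(j)=\mathfrak{S}\log x+O(\mathfrak{S}L)$, then Abel summation against the smooth weight, then the two changes of variable --- is exactly how GGPY's Lemma 4 is proved in the literature (it is a quantitative Wirsing/Halberstam--Richert mean-value theorem followed by partial summation). Your second and third steps are carried out correctly and completely: the boundary terms do cancel after integrating $\log t\cdot\Phi'(t)$ by parts, the bound $\int_1^{R/n}|\Phi'|\le G_{\max}$ is right, and the substitutions produce the stated main term. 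The one place where all the substance lives is the unweighted estimate with error $O(\mathfrak{S}L)$ and implied constant depending only on $A_1,A_2$; you correctly identify the convolution identity and the bootstrap as the mechanism but do not execute it, which is acceptable here only because that estimate \emph{is} the content of the result the paper cites --- if you were writing a self-contained proof you would need to carry the $O(L)$ discrepancy through the iteration explicitly (and also note the minor point that near $t=1$ one has $M(t)=1$ while $\mathfrak{S}\log t=0$, so the claimed uniformity of the $O(\mathfrak{S}L)$ error implicitly uses that in any nontrivial application $\mathfrak{S}L\gg1$, as it is in this paper where $\mathfrak{S}\asymp\phi(W)/W$ and $L\ll\log\log N$). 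In short: the paper defers to the literature, you rederive the literature's proof; your outline is faithful to it, with the key analytic input named but not proved.
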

\begin{proof}
This is stated in \cite[Lemma 3.5]{sono} as a direct consequence of \cite[Lemma 4]{gpylemma}.
\end{proof}
Our main claim of this section is expressed in the following lemma.

\begin{lemma} 
\label{s2mint}
We have \begin{align} S_2^{(m)} &= \frac{\phi(B)\phi(W)^k (\log R)^{k-1}}{W^{k+1}} \nonumber\\ \label{needonce} &\times \int_0^1 \ldots \int_0^1 \Big[\sideset{}{^{\dagger}}\sum_q X_{N/q} \Big( \sum_{d \mid q} \mu(d) \log \frac{R}{d} \int_0^1 G^{(m)}_d(\vec{t_i}) dt_m \Big)^2 \Big] dt_1\ldots dt_{m-1}dt_{m+1} \ldots dt_k \\ &+ O\Big(\frac{F_{\max}^2 \mathcal{T}_N\phi(B)\phi(W)^k (\log R)^{k+1}}{W^{k+1}D_0} \Big),\nonumber \end{align}
where \[ G^{(m)}_d(\vec{t_i}) := F\Big(t_1, \ldots, t_{m-1}, \frac{\log(R/d)}{\log R} \Big(\frac{\log d}{\log R/d}+t_m\Big), t_{m+1}, \ldots, t_k\Big) \]
 and \[ F_{\max} \defeq \sup_{(x_1, \ldots, x_k) \in [0, 1]^k} |F(x_1, \ldots, x_k)| + \sum_{i = 1}^k \Big|\frac{\partial F}{\partial x_i}(x_1, \ldots, x_k) \Big|. \]
\end{lemma}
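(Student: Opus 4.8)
The plan is to substitute the smooth choice $y_{r_1,\ldots,r_k} = F(\log r_1/\log R,\ldots,\log r_k/\log R)$, use Lemma~\ref{ymq} to recast the $m$-th coordinate of $y^{(m,q)}$ as a divisor sum over $d\mid q$, and then evaluate all resulting lattice sums by $k$ successive applications of the GPY-type estimate of Lemma~\ref{gpylemma}. As a first step, combining \eqref{ninja} with the combinatorial identity of the previous section and using $y^{(m,q)}_{\max}\ll F_{\max}\phi(W)\log R/W$ together with $y_{\max}\le F_{\max}$, we obtain
\[
S_2^{(m)} = \frac{\phi(B)}{\phi(W)}\sideset{}{^\dagger}\sum_q X_{N/q}\sum_{\substack{r_1,\ldots,r_k\\ r_m=1}}\frac{(y^{(m,q)}_{r_1,\ldots,r_k})^2}{\prod_{i\neq m}g(r_i)} + O\!\Big(\frac{F_{\max}^2\mathcal{T}_N\phi(B)\phi(W)^k(\log R)^{k+1}}{W^{k+1}D_0}\Big),
\]
where the $O(\lambda_{\max}^2 N/(\log N)^A)$ term of \eqref{ninja} is absorbed for $A$ large (using $\mathcal{T}_N\gg N/\log N$ from \eqref{sn} and the trivial bound on $\lambda_{\max}$ coming from the inversion formula), and the $O((y^{(m,q)}_{\max})^2\phi(W)^{k-1}(\log R)^{k-1}/(W^{k-1}D_0))$ term of the combinatorial lemma, summed against $X_{N/q}$, is exactly the displayed error. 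This last error — which carries two extra powers of $\log R$ precisely because $(y^{(m,q)}_{\max})^2$ has size $(\phi(W)\log R/W)^2$ — will turn out to dominate.

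Next, insert Lemma~\ref{ymq}: for $r_m=1$, $y^{(m,q)}_{r_1,\ldots,r_k} = \sum_{d\mid q}\mu(d)d\sum_{d\mid a_m}\phi(a_m)^{-1}y_{r_1,\ldots,r_{m-1},a_m,r_{m+1},\ldots,r_k} + O(F_{\max}\phi(W)\log R/(WD_0))$. For fixed $d\mid q$ and fixed $(r_i)_{i\neq m}$, write $a_m=dj$ and apply Lemma~\ref{gpylemma} with $n=d$, with $\gamma(\ell)=1$ off the primes dividing $Wd\prod_{i\neq m}r_i$ (so $h(\ell)=1/(\ell-1)$ and $\mathfrak{S}=\frac{\phi(W)}{W}\cdot\frac{\phi(d)}{d}\prod_{i\neq m}\frac{\phi(r_i)}{r_i}(1+o(1))$), and with one-variable test function $t\mapsto F(t_1,\ldots,t_{m-1},t,t_{m+1},\ldots,t_k)$; this converts the inner sum into $\frac{\mathfrak{S}}{\phi(d)}\log\frac{R}{d}\int_0^1 G^{(m)}_d(\vec{t_i})\,dt_m + O(\mathfrak{S}LF_{\max})$, where $L\ll\log R/D_0$ so the GPY error is a relative $O(1/D_0)$. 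Multiplying by $\mu(d)d$, using $d/\phi(d)=1+O(N^{-\eta})$, and summing the at most $2^{r-1}$ divisors $d\mid q$ yields $y^{(m,q)}_{r_1,\ldots,r_k} = \frac{\phi(W)}{W}\prod_{i\neq m}\frac{\phi(r_i)}{r_i}\sum_{d\mid q}\mu(d)\log\frac{R}{d}\int_0^1 G^{(m)}_d\,dt_m + O(F_{\max}\phi(W)\log R/(WD_0))$. Since the main term here has size $\ll F_{\max}\phi(W)\log R/W$ and the error an extra $1/D_0$, squaring produces $(y^{(m,q)}_{r_1,\ldots,r_k})^2$ equal to $(\frac{\phi(W)}{W})^2\prod_{i\neq m}(\frac{\phi(r_i)}{r_i})^2\big(\sum_{d\mid q}\mu(d)\log\frac{R}{d}\int_0^1 G^{(m)}_d\,dt_m\big)^2$ up to an error $O(F_{\max}^2\phi(W)^2(\log R)^2/(W^2D_0))$.

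Finally, feed this into $\sum_{(r_i)_{i\neq m}}$. Since the bracketed square depends on $(r_i)_{i\neq m}$ only through $t_i=\log r_i/\log R$, peel off the remaining $k-1$ directions one at a time by Lemma~\ref{gpylemma}, exactly as in the corresponding computation of Maynard: at the $i$-th step take $n=\prod_{j<i}r_j$ and the weight with $h(\ell)=(\phi(\ell)/\ell)^2/g(\ell)$, for which $\gamma(\ell)=1+O(1/\ell)$ and hence $\mathfrak{S}=\frac{\phi(W)}{W}(1+o(1))$; each step contributes a factor $\frac{\phi(W)}{W}\log R$ and converts $\sum_{r_i}$ into $\int_0^1 dt_i$ with the shrinking range absorbed into the change of variables, while the GPY error again loses only a relative $O(1/D_0)$. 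Collecting the constants — the $\frac{\phi(B)}{\phi(W)}$ of \eqref{ninja}, the $(\frac{\phi(W)}{W})^2$ from the squaring, and the $(\frac{\phi(W)}{W}\log R)^{k-1}$ from the $k-1$ applications — gives precisely the prefactor $\frac{\phi(B)\phi(W)^k(\log R)^{k-1}}{W^{k+1}}$, and leaving $\sideset{}{^\dagger}\sum_q X_{N/q}(\cdots)^2$ unexpanded (the inner square genuinely depends on $q$) produces the integral in \eqref{needonce}. The main obstacle is not any single estimate but the error bookkeeping: one must verify that every accumulated loss — from Lemma~\ref{ymq}, from the combinatorial lemma, from the $d/\phi(d)$ and $\mathfrak{S}$ approximations, from the $k$ GPY error terms, and from the cross terms of the two squarings — is $O(F_{\max}^2\mathcal{T}_N\phi(B)\phi(W)^k(\log R)^{k+1}/(W^{k+1}D_0))$ or smaller, with the combinatorial-lemma error the unique one attaining that size.
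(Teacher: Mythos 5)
Your proposal is correct and follows essentially the same route as the paper: starting from \eqref{ninja} and the combinatorial lemma, you re-derive Lemma~\ref{ymqint} inline (one application of Lemma~\ref{gpylemma} in the $a_m$ variable with $n=d$, summed over $d\mid q$), square, and then peel off the remaining $k-1$ variables with Lemma~\ref{gpylemma}, correctly identifying the combinatorial lemma's $O((y^{(m,q)}_{\max})^2\phi(W)^{k-1}(\log R)^{k-1}/(W^{k-1}D_0))$ term as the dominant error. The only imprecision is the phrase ``at the $i$-th step take $n=\prod_{j<i}r_j$'': taken literally this would put $\log(r_i\prod_{j<i}r_j)/\log R$ into the $i$-th slot of $F$ rather than $\log r_i/\log R$; the pairwise-coprimality and range restrictions are instead handled (as in the paper and in Maynard) by first discarding the terms with $(r_i,r_j)>1$ at a cost of $O(1/D_0)$ and then applying the lemma with the coprimality built into $\gamma$.
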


Before proving Lemma \ref{s2mint}, we first prove the following claim, which expresses the $y^{(m, q)}$ variables in terms of the function $F$.

\begin{lemma}
\label{ymqint}
If $r_m = 1$, and $y_{r_1, \ldots, r_k}^{(m, q)} \neq 0$, then
\[ y_{r_1, \ldots, r_k}^{(m, q)} = \frac{\phi(W)}{W} \prod_{i \neq m} \frac{\phi(r_i)}{r_i}\sum_{d \mid q} \mu(d) \log\frac{R}{d} \int_0^1 H_d(t_m) dt_m + O\Big(\frac{F_{\max}\phi(W)\log R}{WD_0} \Big), \]
where \[ H_d(t_m) = F\Big(\frac{\log r_1}{\log R}, \ldots, \frac{\log r_{m-1}}{\log R}, \frac{\log(R/d)}{\log R} \Big(\frac{\log d}{\log R/d}+t_m\Big), \frac{\log r_{m+1}}{\log R}, \ldots, \frac{\log r_k}{\log R} \Big). \]
\end{lemma}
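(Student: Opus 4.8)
The plan is to substitute the $r_m=1$ case of Lemma~\ref{ymq},
\[
y_{r_1,\ldots,r_k}^{(m,q)} \;=\; \sum_{d\mid q}\mu(d)\,d\!\!\sum_{d\mid a_m}\frac{y_{r_1,\ldots,r_{m-1},a_m,r_{m+1},\ldots,r_k}}{\phi(a_m)} \;+\; O\!\Big(\frac{y_{\max}\phi(W)\log R}{WD_0}\Big),
\]
into the GPY-type estimate of Lemma~\ref{gpylemma}. Since $y_{\max}\le F_{\max}$, the displayed error already has the shape claimed in Lemma~\ref{ymqint}, so it is enough to evaluate, for each fixed $d\mid q$, the inner sum over $a_m$ and then to sum the $2^{r-1}$ contributions.

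Fix $d\mid q$ and write $a_m=dj$. Since $y$ is supported on $k$-tuples with squarefree, pairwise coprime entries that are coprime to $W$ and whose product is below $R$, and since every prime factor of $q$ (hence of $d$) exceeds $D_0$, the surviving terms are exactly those with $\mu^2(j)=1$, $(j,\,Wd\prod_{i\neq m}r_i)=1$, and $j<R/(d\prod_{i\neq m}r_i)$; for these $\phi(a_m)=\phi(d)\phi(j)$. Because $F$ vanishes off $\mathcal R_{k,\eta}$, and in particular once the sum of its arguments reaches $1$, the range may be enlarged to $j<R/d$ at the cost of at most one boundary term, of size $O(F_{\max})$. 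Introducing the totally multiplicative $h$ with $h(\ell)=\tfrac1{\ell-1}$ for $\ell\nmid Wd\prod_{i\neq m}r_i$ and $h(\ell)=0$ otherwise (so that $1/\phi(j)=h(j)$ throughout the sum) and setting
\[
G(x)\;\defeq\; F\!\Big(\tfrac{\log r_1}{\log R},\ldots,\tfrac{\log r_{m-1}}{\log R},\,x,\,\tfrac{\log r_{m+1}}{\log R},\ldots,\tfrac{\log r_k}{\log R}\Big),
\]
the inner sum becomes $\frac1{\phi(d)}\sum_{j<R/d}\mu^2(j)\,h(j)\,G\!\big(\tfrac{\log jd}{\log R}\big)$, which is precisely the object estimated by Lemma~\ref{gpylemma} with $n=d$.

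To apply that lemma I check its hypotheses for the associated $\gamma(\ell)=\ell h(\ell)/(1+h(\ell))=\mathbf 1[\ell\nmid Wd\prod_{i\neq m}r_i]$: clearly $\gamma(\ell)/\ell\le\half$, and by Mertens' theorem
\[
\sum_{w\le\ell\le z}\frac{\gamma(\ell)\log\ell}{\ell}-\log\frac zw \;=\; O(1)-\!\!\sum_{\substack{w\le\ell\le z\\ \ell\mid Wd\prod_{i\neq m}r_i}}\!\!\frac{\log\ell}{\ell},
\]
whose negative part is $\ll 1+\sum_{\ell<D_0}\tfrac{\log\ell}{\ell}+\sum_{\ell\mid d\prod_{i\neq m}r_i}\tfrac{\log\ell}{\ell}\ll\log D_0+\log\log N\ll\log\log N$, since $d\prod_{i\neq m}r_i<N$; thus we may take $A_1=\half$, some $A_2=O(1)$, and $L\ll\log\log N$. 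The singular series evaluates, using $(d,W)=(d,\prod_{i\neq m}r_i)=1$ and the pairwise coprimality of the $r_i$, to
\[
\mathfrak S_d=\prod_{\ell\mid Wd\prod_{i\neq m}r_i}\!\!\Big(1-\tfrac1\ell\Big)=\frac{\phi(W)}{W}\cdot\frac{\phi(d)}{d}\prod_{i\neq m}\frac{\phi(r_i)}{r_i}.
\]
Inserting this into Lemma~\ref{gpylemma}, multiplying by $\mu(d)\,d$, and noting that the prefactor $\tfrac{\mu(d)d}{\phi(d)}\,\mathfrak S_d$ collapses to $\mu(d)\tfrac{\phi(W)}{W}\prod_{i\neq m}\tfrac{\phi(r_i)}{r_i}$, the sum over the $2^{r-1}$ divisors $d\mid q$ yields the stated main term after identifying the integrand with $H_d(t_m)$. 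The total error from Lemma~\ref{gpylemma} is $O\big(2^{r-1}\,\tfrac{d}{\phi(d)}\,\mathfrak S_d\,L\,G_{\max}\big)=O\big(\tfrac{\phi(W)}{W}F_{\max}\log\log N\big)$ (using $G_{\max}\le F_{\max}$), and this, together with the boundary terms and the carried-over error of Lemma~\ref{ymq}, is absorbed into $O\big(\tfrac{F_{\max}\phi(W)\log R}{WD_0}\big)$ because $\log\log N=o(\log R/D_0)$.

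The only genuinely delicate step is the bookkeeping of the second paragraph: one must confirm that the support constraints on $y$ translate exactly into the forbidden primes of $\gamma$, so that $1/\phi(j)$ really coincides with $h(j)$ on the full range of summation, and then verify the hypotheses of Lemma~\ref{gpylemma} — above all the bound $L\ll\log\log N$, which is what makes that lemma's error term negligible against the target $O(\phi(W)\log R/(WD_0))$. Everything else is a routine substitution and simplification.
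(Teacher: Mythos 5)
Your proposal is correct and follows essentially the same route as the paper: substitute the $r_m=1$ case of Lemma~\ref{ymq}, write $a_m=dj$, and apply Lemma~\ref{gpylemma} with $\gamma(\ell)=\mathbf{1}[\ell\nmid dW\prod_{i\neq m}r_i]$, $L\ll\log\log N$, and $\mathfrak S=\phi(dW\prod_{i\neq m}r_i)/(dW\prod_{i\neq m}r_i)$, then sum over the $O(2^{r-1})$ divisors $d\mid q$. Your verification of the hypotheses of Lemma~\ref{gpylemma} and the handling of the summation range for $j$ are, if anything, slightly more explicit than the paper's.
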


\begin{proof}
We apply Lemma \ref{gpylemma} to the expression for $y_{r_1, \ldots, r_k}^{(m, q)}$ described at the end of Lemma \ref{ymq}. By Lemma \ref{ymq}, for $r_m = 1$, we can write
{\footnotesize
\begin{align*} 
y_{r_1, \ldots, r_k}^{(m, q)} &= \sum_{d \mid q} \mu(d)d \sum_{d \mid a_m} \frac{y_{r_1, \ldots, r_{m-1}, a_m, r_{m+1}, \ldots, r_k}}{\phi(a_m)} + O\left(\frac{y_{\max}\phi(W)\log R}{WD_0} \right) \\ &= \sum_{d \mid q} \frac{\mu(d)d}{\phi(d)} \sum_{\substack{a_m' < R/d \\ (a_m', dW\prod_{i\neq m} r_i) = 1}} F\left(\frac{\log r_1}{\log R}, \ldots, \frac{\log r_{m-1}}{\log R}, \frac{\log (a_m' \cdot d)}{\log R}, \frac{\log r_{m+1}}{\log R}, \ldots, \frac{\log a_k}{\log R} \right) \cdot \frac{\mu(a_m')^2}{\phi(a_m')} \\ &\hspace{24pt}+ O\left(\frac{y_{\max}\phi(W)\log R}{WD_0}\right). 
\end{align*}} We choose $\gamma$ on the primes in the following way: \[ \gamma(\ell) = \begin{cases} 0 \textnormal{ if } \ell \mid dW\prod_{i \neq m}r_i \\ 1 \textnormal{ otherwise} \\ \end{cases}. \] Using the notation in Lemma \ref{gpylemma}, we can use the same argument as \cite{sono} and choose \begin{align*} L &\ll 1+ \sum_{\ell \mid Wd\prod_{i \neq m} r_i} \frac{\log \ell}{\ell} + \left(\log{z/w}-\sum_{w \le \ell \le z} \frac{\log \ell}{\ell} \right) \\ &\ll 1 + \sum_{\ell < \log R} \frac{\log \ell}{\ell} + \sum_{\substack{\ell > \log R \\ \ell \mid Wd\prod_{i\neq m} r_i}} \frac{\log \log R}{\log R} \ll \log \log N \end{align*} where $A_1$ and $A_2$ are fixed constants.

Note that because $y_{r_1, \ldots, r_k}^{(m, q)}$ is supported, $(r_i, r_j) = 1$ and $(r_i, W) = 1.$ If $(d, W) > 1$ or $(d, r_i) > 1$ for some $i \neq m$, then clearly $y_{r_1, r_{m-1}, a_m, r_{m+1}, \ldots, r_k} = 0$, so we can ignore these terms. Therefore, \[ \mathfrak{S} = \frac{\phi(dW\prod_{i\neq m} r_i)}{dW \prod_{i\neq m} r_i} = \frac{\phi(d)\phi(W)\prod_{i\neq m} \phi(r_i)}{dW\prod_{i\neq m} r_i}. \] Directly substituting this into Lemma \ref{gpylemma} yields \begin{align*} y_{r_1, \ldots, r_k}^{(m, q)} &= \frac{\phi(W)}{W} \prod_{i \neq m} \frac{\phi(r_i)}{r_i}\sum_{d \mid q} \mu(d) \log\frac{R}{d} \int_0^1 H_d(t_m) dt_m \\ &+ \sum_{d \mid q} \frac{d}{\phi(d)} O\left(\frac{F_{\max}\log \log N\phi(d)\phi(W) \prod_{i\neq m} \phi(r_i)}{dW\prod_{i\neq m} r_i} \right) + O\left(\frac{F_{\max}\phi(W)\log R}{WD_0} \right). \end{align*} The second error term dominates, which completes the proof.
\end{proof}

We are now ready to prove Lemma \ref{s2mint}.

\begin{proof}
For simplicity, define \[ \mathcal{G}^{(q)}_{r_1, \ldots, r_{m-1}, r_{m+1}, \ldots, r_k} = \sum_{d \mid q} \mu(d) \log\frac{R}{d} \int_0^1 H_d(t_m) dt_m, \] where $H_d$ is defined as in Lemma \ref{ymqint}. Note that \[ \sup_{(r_1, \ldots, r_{m-1}, r_{m+1}, \ldots, r_k) \in \{1, 2, \ldots, R\}^{k-1}} \mathcal{G}^{(q)}_{r_1, \ldots, r_{m-1}, r_{m+1}, \ldots, r_k} \ll F_{\max} \log R. \] Now, note that by Lemma \ref{ymqint}, for $r_m = 1$ and $y_{r_1, \ldots, r_k}^{(m, q)} \neq 0$, we have \[ (y_{r_1, \ldots, r_k}^{(m, q)})^2 = \frac{\phi(W)^2 \prod_{i \neq m} \phi(r_i)^2}{W^2 \prod_{i \neq m} r_i^2} (\mathcal{G}^{(q)}_{r_1, \ldots, r_{m-1}, r_{m+1}, \dots, r_k})^2 + O\left(\frac{F_{\max}^2\phi(W)^2(\log R)^2}{W^2D_0} \right). \] Therefore, we can write \begin{equation} \label{kanye} S_2^{(m)} = \frac{\phi(B)\phi(W)}{W^2} \sum_{\substack{r_i, i \neq m \\ (r_i, W) = 1 \\ (r_i, r_j) = 1}} \prod_{i \neq m} \left(\frac{\phi(r_i)^2\mu(r_i)^2}{g(r_i)r_i^2} \right) \sideset{}{^\dagger} \sum_q X_{N/q} (\mathcal{G}^{(q)}_{r_1, \ldots, r_{m-1}, r_{m+1}, \dots, r_k})^2, \end{equation} plus an error term. This error is \[ \ll \frac{F_{\max}^2\mathcal{T}_N\phi(B)\phi(W)(\log R)^2}{W^2D_0} \left(\sum_{\substack{r < R \\ (r, W) = 1}} \frac{1}{g(r)} \right)^{k-1} \ll \frac{F_{\max}^2\mathcal{T}_N\phi(B)\phi(W)^k (\log R)^{k+1}}{W^{k+1}D_0}, \] which is negligible. Looking at the main term of $S_2^{(m)}$ in \eqref{kanye}, we will now eliminate the condition that $(r_i, r_j) = 1$ from the sum, at the cost of a small error. Note that if $(r_i, r_j) > 1$, then $(r_i, r_j) > D_0.$ Therefore, by summing over all primes $p$ greater than $D_0$, we observe that the contribution from terms with $(r_i, r_j) > 1$ is 
\begin{align*}
\ll \frac{F_{\max}^2\mathcal{T}_N\phi(B)\phi(W)(\log R)^2}{W^2}& \left(\sum_{p > D_0} \frac{\phi(p)^4}{g(p)^2p^4} \right) \left(\sum_{\substack{r < R \\ (r, W) = 1}} \frac{\phi(r)^2}{g(r)r^2} \right)^{k-1} \\&\ll \frac{F_{\max}^2\mathcal{T}_N\phi(B)\phi(W)^k (\log R)^{k+1}}{W^{k+1}D_0}, 
\end{align*}
which is also negligible. Therefore, \begin{align} S_2^{(m)} &= \frac{\phi(B)\phi(W)}{W^2} \sum_{\substack{r_i, i \neq m \\ (r_i, W) = 1}} \prod_{i \neq m} \left(\frac{\phi(r_i)^2\mu(r_i)^2}{g(r_i)r_i^2} \right) \sum_{q} (\mathcal{G}^{(q)}_{r_1, \ldots, r_{m-1}, r_{m+1}, \dots, r_k})^2 \\ &+ O\left(\frac{F_{\max}^2\phi(B)\phi(W)^k (\log R)^{k+1}}{W^{k+1}D_0}\right). \nonumber \end{align}

Now, consider $\gamma$ defined on the primes in the following way: \[ \gamma(\ell) = \begin{cases} 0 \textnormal{ if }\ell \mid W \\ 1 - \frac{\ell^2-3\ell+1}{\ell^3-\ell^2-2\ell+1} \textnormal{ otherwise.} \\ \end{cases} \] This was chosen so that $h(\ell) = 0$ if $\ell \mid W$, and $h(\ell) = \frac{\phi(\ell)^2}{g(\ell)\ell^2}$ otherwise. As we want to apply Lemma \ref{gpylemma}, we can choose \[ L \ll 1 + \prod_{\ell \mid W} \frac{\log \ell}{\ell} \ll \log D_0. \] Note that \begin{align*} \mathfrak{S} &= \prod_\ell \left(1 - \frac{\gamma(\ell)}{\ell} \right)^{-1} \left(1 - \frac{1}{\ell} \right) \\ &= \frac{\phi(W)}{W} \prod_{\ell \nmid W} \left(1 - \frac{\gamma(\ell)}{\ell} \right)^{-1} \left(1 - \frac{1}{\ell} \right) = \frac{\phi(W)}{W} \prod_{\ell \nmid W} \left(1 - O(\ell^{-2}) \right) = \frac{\phi(W)}{W}(1 - O(D_0^{-1})). \end{align*} Applying Lemma \ref{gpylemma} $k-1$ times and substituting in the definition of $\mathcal{G}^{(q)}_{r_1, \ldots, r_{m-1}, r_{m+1}, \ldots, r_k}$ and $H_d(t_m)$, we have that \begin{align*} S_2^{(m)} &= \frac{\phi(B)\phi(W)^k (\log R)^{k-1}}{W^{k+1}} \\ &\times \int_0^1 \ldots \int_0^1 \left[\sideset{}{^{\dagger}}\sum_q X_{N/q} \left( \sum_{d \mid q} \mu(d) \log \frac{R}{d} \int_0^1 G^{(m)}_d(\vec{t_i}) dt_m \right)^2 \right] dt_1\ldots dt_{m-1}dt_{m+1} \ldots dt_k \\ &+ O\left(\frac{F_{\max}^2 \mathcal{T}_N \phi(B)\phi(W)^k (\log R)^k \log D_0}{W^{k+1}D_0} \right) + O\left(\frac{F_{\max}^2 \mathcal{T}_N\phi(B)\phi(W)^k (\log R)^{k+1}}{W^{k+1}D_0} \right), \end{align*} where the first error term comes from Lemma \ref{gpylemma} and the second comes from earlier. The first error term is negligible, which completes the proof.
\end{proof}

As each $q$ in $\sum^{\dagger}$ only has prime factors that are at least $N^\eta$, we know that every divisor $d > 1$ of $q$ is at least $N^\eta.$ Therefore, if $F$ is chosen to be supported on $x_i \le \frac{\log N^{\eta}}{\log R}$, then all terms with $d > 1$ in (\ref{needonce}) vanish. Since $\frac{\log N^{\eta}}{\log R} \ge \frac{2\eta}{\theta}$, we observe that imposing $F$ to be supported on
\begin{equation}\label{setsupport}
\left\{(x_1,\ldots, x_k) \in [0,1]^k \colon x_i \le \frac{2\eta}{\theta}\right\}
\end{equation}
accomplishes this goal. Given this discussion, we have the following corollary.
\begin{corollary}\label{coral}
If $F$ is supported on \eqref{setsupport}, then \[ S_2^{(m)} = (1 + o(1))\frac{\mathcal{T}_N\phi(B)\phi(W)^k(\log R)^{k + 1}}{W^{k + 1}}J_k^{(m)}(F). \]
\end{corollary}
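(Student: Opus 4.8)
The plan is to feed the support hypothesis on $F$ into the identity for $S_2^{(m)}$ established in Lemma~\ref{s2mint} and watch the sum over divisors collapse. Recall that every $q$ occurring in $\sideset{}{^{\dagger}}\sum_q$ is a product of primes, each at least $N^{\eta}$, so any divisor $d>1$ of $q$ satisfies $\log d \ge \eta\log N$ and hence
\[
\frac{\log d}{\log R} \;\ge\; \frac{\eta\log N}{(\tfrac{\theta}{2}-\epsilon)\log N} \;=\; \frac{2\eta}{\theta-2\epsilon} \;>\; \frac{2\eta}{\theta}.
\]
In the function $G^{(m)}_d(\vec{t_i})$ the $m$-th argument of $F$ equals $\frac{\log d + t_m\log(R/d)}{\log R}$, which for $t_m\ge 0$ is at least $\frac{\log d}{\log R}$ and therefore strictly exceeds $\frac{2\eta}{\theta}$; since $F$ vanishes outside the set \eqref{setsupport}, we get $G^{(m)}_d\equiv 0$ for every $d>1$.

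Thus only the term $d=1$ survives in the inner sum of \eqref{needonce}. For $d=1$ one has $\mu(1)=1$, $\log(R/1)=\log R$, and directly from the definition $G^{(m)}_1(\vec{t_i}) = F(t_1,\dots,t_k)$, so the bracketed quantity in \eqref{needonce} becomes
\[
(\log R)^{2}\,\sideset{}{^{\dagger}}\sum_q X_{N/q}\left(\int_0^1 F(t_1,\dots,t_k)\,dt_m\right)^{2}.
\]
Here the summand no longer depends on $q$, so we may pull $\sideset{}{^{\dagger}}\sum_q X_{N/q} = \mathcal{T}_N$ out of the sum, and the remaining $(k-1)$-fold integral is precisely $J_k^{(m)}(F)$. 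This turns the main term of Lemma~\ref{s2mint} into $\frac{\mathcal{T}_N\phi(B)\phi(W)^k(\log R)^{k+1}}{W^{k+1}}J_k^{(m)}(F)$.

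It remains to absorb the error term, which by Lemma~\ref{s2mint} is $O\!\left(\frac{F_{\max}^2\mathcal{T}_N\phi(B)\phi(W)^k(\log R)^{k+1}}{W^{k+1}D_0}\right)$ — exactly a factor $D_0^{-1}$ smaller than the main term. Since $D_0 = \log\log\log N\to\infty$ while $F$ is fixed (so $F_{\max}$ is bounded, and $J_k^{(m)}(F)\ne 0$ for $F\in\mathcal{S}_{k,\eta}$), this error is $o(1)$ times the main term, which gives the asserted asymptotic. There is no genuine obstacle here: the only point requiring care is the strict inequality $\frac{\log N^{\eta}}{\log R}>\frac{2\eta}{\theta}$, which is what forces the $d>1$ terms to vanish identically rather than merely to be small, and which in turn uses that $R=N^{\theta/2-\epsilon}$ with $\epsilon>0$; everything else is bookkeeping already carried out in Lemma~\ref{s2mint}.
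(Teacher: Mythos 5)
Your argument is correct and is essentially the paper's own: both deduce from Lemma~\ref{s2mint} that every divisor $d>1$ of a $q$ in $\sideset{}{^{\dagger}}\sum_q$ is at least $N^{\eta}$, so that $\tfrac{\log d}{\log R}>\tfrac{2\eta}{\theta}$ forces $G^{(m)}_d\equiv 0$ on the support \eqref{setsupport}, leaving only the $d=1$ term, which factors as $(\log R)^2\mathcal{T}_N J_k^{(m)}(F)$ against an error smaller by $D_0^{-1}$. Your added remark that the strictness comes from $R=N^{\theta/2-\epsilon}$ with $\epsilon>0$ is a fair (and slightly more careful) restatement of the paper's inequality $\tfrac{\log N^{\eta}}{\log R}\ge\tfrac{2\eta}{\theta}$.
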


\section{Smooth Choice of $F$}\label{sec:smoothf}
We now choose a constant $\eta$ and a function $F$ to give a large lower bound for \eqref{functional}.
\begin{lemma}
\label{calcvar}
Define \[ I_k(F) = \int_0^1 \int_0^1 \ldots \int_0^1 F(t_1, \ldots, t_k)^2 dt_1 \ldots dt_k \] and \[ J_k^{(m)}(F) = \int_0^1 \int_0^1 \ldots \int_0^1 \left(\int_0^1 F(t_1, \ldots, t_k) dt_m \right)^2 dt_1 \ldots dt_{m-1} dt_{m+1} \ldots dt_k. \] Let $A >0$,
\begin{equation}\label{notationof}
T = \frac{e^A-1}{A}, \hspace{18pt}\text{ and }\hspace{18pt}\sigma = \frac{1}{A^2}(A-1+e^{-A}). 
\end{equation} If $1 - \frac{T}{k} - \sigma > 0$, then there exists a smooth symmetric function $F: [0,1]^k \to \RR$ supported on $\{(x_1, \ldots, x_k) \colon x_i \le T/k \}$ such that \[ \frac{\sum_{m=1}^k J_k^{(m)}(F)}{I_k(F)} \ge A\left(1 - \frac{Ae^A}{k(1 - A/(e^A-1) - e^A/k)^2} \right). \]
\end{lemma}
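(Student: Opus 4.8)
The plan is to produce an explicit near-optimal trial function of product type and reduce the computation of $I_k$ and the $J_k^{(m)}$ to one-dimensional integrals. Put $T=\frac{e^A-1}{A}$ and let $g\colon[0,T]\to\RR$ be $g(u)=\frac{1}{1+Au}$, extended by $0$ outside $[0,T]$; the value of $T$ is chosen exactly so that $\int_0^T g(u)\,du=1$, and a direct calculation gives the moments $\int_0^T g(u)^2\,du=\frac{1-e^{-A}}{A}$, $\int_0^T u\,g(u)^2\,du=\sigma$, and $\int_0^T u^2 g(u)^2\,du=\frac{e^A-2A-e^{-A}}{A^3}$. Replacing $g$ on a short interval near its right endpoint by a smooth taper to $0$ perturbs each of these by an amount that tends to $0$ with the taper width, so we may take $g$ to be genuinely smooth with (essentially) these moments. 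Then I would set
\[
F(t_1,\dots,t_k)=\Big(\prod_{i=1}^k g(kt_i)\Big)\,\chi\!\Big(\sum_{i=1}^k t_i\Big),
\]
where $\chi$ is a fixed smooth cutoff equal to $1$ on $(-\infty,1-\delta]$ and to $0$ on $[1,\infty)$. Since $g(kt_i)=0$ for $t_i>T/k$, the function $F$ is smooth, symmetric, supported on $\{x_i\le T/k\}$, and — having been cut down to $\sum x_i\le 1$ — lies in $\mathcal S_{k,\eta}$ once $\eta$ is chosen with $\frac{2\eta}{\theta}=\frac Tk$. The hypothesis $1-\frac Tk-\sigma>0$ forces $T/k<1$, which is what makes the integration regions used below nonempty (hence $I_k(F),J_k^{(m)}(F)\ne 0$) and keeps the deviation in the truncation estimate on the correct side.

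Both functionals are then evaluated through the substitution $u_i=kt_i$. For the denominator, dropping the cutoff only enlarges it, so $I_k(F)\le\big(\tfrac1k\int_0^T g^2\big)^k=\big(\tfrac{1-e^{-A}}{Ak}\big)^k$. For the numerator, fix $m$ and restrict the outer $(k-1)$-fold integral defining $J_k^{(m)}(F)$ to $\{\sum_{i\ne m}t_i\le 1-T/k\}$; on that region $\chi$ does not truncate the inner $t_m$-integral, which therefore equals $\frac1k\int_0^T g=\frac1k$ times $\prod_{i\ne m}g(kt_i)$. This gives
\[
J_k^{(m)}(F)\ \ge\ \frac1{k^2}\Big[\Big(\tfrac{1-e^{-A}}{Ak}\Big)^{k-1}-E\Big],\qquad E:=k^{-(k-1)}\!\!\int_{\substack{[0,T]^{k-1}\\ \sum_{i\ne m}u_i>\,k-T}}\prod_{i\ne m}g(u_i)^2\,du .
\]
Combining these with the symmetry relation $\sum_{m=1}^k J_k^{(m)}(F)=k\,J_k^{(m)}(F)$ and with $1\big/\!\int_0^T g^2=\tfrac{A}{1-e^{-A}}\ge A$ yields
\[
\frac{\sum_{m=1}^k J_k^{(m)}(F)}{I_k(F)}\ \ge\ A\Big(1-\frac{E}{\big((1-e^{-A})/(Ak)\big)^{k-1}}\Big),
\]
so it remains only to bound the ratio in parentheses by $\frac{Ae^A}{k\,(1-A/(e^A-1)-e^A/k)^2}$.

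That ratio is the probability, under the product measure on $[0,T]^{k-1}$ with density proportional to $\prod_{i\ne m}g(u_i)^2$, that $\sum_{i\ne m}u_i>k-T$; under this measure the coordinates are i.i.d.\ with mean $\mu=\sigma\big/\!\int_0^T g^2$. From the explicit moments one obtains the clean identity $1-\mu=\frac1A\big(1-\frac1T\big)$, so the event $\{\sum u_i>k-T\}$ lies a distance $(k-T)-(k-1)\mu=\frac kA\big(1-\frac1T-\frac{e^A}{k}\big)$ (plus lower-order terms, using $AT=e^A-1$) above the mean, a positive quantity by hypothesis. Applying Chebyshev's inequality with the second moment $\int_0^T u^2 g^2\big/\!\int_0^T g^2\ll e^A/A^2$ then bounds this probability by a constant multiple of $\frac{e^A A^2}{k}\cdot\frac1{(1-1/T-e^A/k)^2}$, and a careful accounting of the constants produces exactly the stated bound; substituting it back finishes the proof. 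The step I expect to cost the most effort is precisely this last one: centering the deviation at the mean is essential — Markov's inequality alone leaves a contribution that does not tend to $0$ with $k$ — and one must track the $k$-dependence of the deviation accurately enough that its square reproduces the factor $(1-A/(e^A-1)-e^A/k)^2$ in the denominator.
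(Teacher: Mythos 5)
Your proposal is correct and follows essentially the same route as the paper, which simply cites Maynard's Section 7 for exactly this trial function $F(t)=\prod_i g(kt_i)$ with $g(u)=1/(1+Au)$ truncated at $T$ and restricted to $\sum t_i\le 1$; your moment computations, the identity $1-\mu=\frac1A(1-\frac1T)$, and the centered second-moment (Chebyshev) bound on the tail event $\sum_{i\ne m}u_i>k-T$ are precisely the ingredients of that argument. The only point handled more carefully than in the source is the smoothing of $g$ and of the cutoff at $\sum t_i=1$, which you correctly dispose of by a vanishing perturbation.
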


\begin{proof}
This is proved in \cite[Section 7]{maynard} for 
\[
F(x_1,\ldots,x_k)\defeq \begin{cases} \prod_{i=1}^k g(kx_i) &\mbox{if } \sum_{i=1}^k x_i \le 1, \\ 
0 & \mbox{otherwise}, \end{cases}
\]
where 
\[
g(t)\defeq \begin{cases} \frac{1}{1+At} &\mbox{if } 0 \le t \le T, \\ 
0 & \mbox{otherwise}. \end{cases} 
\]
\end{proof}
The function $F$ defined above is supported on $\{(x_1, \ldots, x_k) \colon x_i \le T/k \}$. Since we need $F$ to be supported on \eqref{setsupport} in order to apply Corollary~\ref{coral}, we let
\[
\eta = \frac{T\theta}{2k},
\]
Setting $y_{r_1,\ldots,r_k}$ equal to $F(\frac{\log r_1}{\log R},\ldots,\frac{\log r_k}{\log R})$ for the function $F$ given in Lemma \ref{s2mint}, we have \[ S_2^{(m)} = (1+o(1))\frac{\mathcal{T}_N\phi(B)\phi(W)^k (\log R)^{k+1}J_k^{(m)}(F)}{W^{k+1}}, \] 
Also, recall from \eqref{s1s1} that 
\[
S_1 = (1+o(1)) \frac{NB \phi(W)^k (\log R)^k I_k(F)}{W^{k+1}}. \] 
We now prove the following lower bound on $\mathcal{T}_N$ stated in Proposition~\ref{prop1}.
\begin{lemma}
For $0 < \eta < \frac{1}{2(r-1)}$, we have
\[
\mathcal{T}_N \ge (1+o(1))\frac{\delta^r N}{(\log N)(r-1)!}\left(\log \frac{1}{2(r-1)\eta} \right)^{r-1}.
\]
\end{lemma}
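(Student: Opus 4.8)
The plan is to unfold the definition of $\mathcal{T}_N$ and reduce it to a sum over primes that we can estimate via the prime number theorem for $\mathcal{P}$, exploiting the density relation \eqref{2.1}. Recall that
\[
\mathcal{T}_N = \sideset{}{^{\dagger}}\sum_{q} X_{N/q}, \qquad X_n = \sum_{n \le t < 2n} \beta_1(t),
\]
where the dagger restricts $q = p_1 \cdots p_{r-1}$ to those with $N^\eta \le p_1 < \cdots < p_{r-1}$ and $q < N^{1/2}$, and where (by the definition of $E_1$) $\beta_1(t) = 1$ precisely when $t \in \mathcal{P}$ and $t \ge N^{1/2}$. First I would observe that for each admissible $q$ in the dagger sum, $X_{N/q}$ counts primes $p \in \mathcal{P}$ in the dyadic interval $[N/q, 2N/q)$ subject to $p \ge N^{1/2}$; since $q < N^{1/2}$ forces $N/q > N^{1/2}$, the lower constraint $p \ge N^{1/2}$ is automatically satisfied, so $X_{N/q} = \pi_{\mathcal{P}}(2N/q) - \pi_{\mathcal{P}}(N/q)$. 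Applying \eqref{2.1}, this is $(1+o(1)) \frac{\delta N}{q \log(N/q)}$, and since $q < N^{1/2}$ we have $\log(N/q) \le \log N$, giving the clean lower bound $X_{N/q} \ge (1+o(1)) \frac{\delta N}{q \log N}$.

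Next I would substitute this into the dagger sum to get
\[
\mathcal{T}_N \ge (1+o(1)) \frac{\delta N}{\log N} \sideset{}{^{\dagger}}\sum_{q} \frac{1}{q} = (1+o(1))\frac{\delta N}{\log N} \sum_{\substack{N^\eta \le p_1 < \cdots < p_{r-1} \\ p_1 \cdots p_{r-1} < N^{1/2}}} \frac{1}{p_1 \cdots p_{r-1}}.
\]
The remaining task is to lower-bound this sum over $(r-1)$-tuples of primes in $\mathcal{P}$. I would first drop the ordering constraint by dividing by $(r-1)!$ (overcounting), and then enlarge the region slightly: restrict instead to the sub-simplex where each $p_i \in [N^\eta, N^{1/(2(r-1))}]$, which guarantees $p_1 \cdots p_{r-1} < N^{1/2}$ and makes the variables separate. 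This gives
\[
\sum_{\substack{N^\eta \le p_1 < \cdots < p_{r-1} \\ p_1 \cdots p_{r-1} < N^{1/2}}} \frac{1}{p_1 \cdots p_{r-1}} \ge \frac{1}{(r-1)!}\left(\sum_{N^\eta \le p \le N^{1/(2(r-1))}} \frac{1}{p}\right)^{r-1} - (\text{diagonal error}),
\]
where the diagonal error term accounts for tuples with a repeated prime and is $O((\log\log N)^{r-2})$, hence negligible against the main $(\log\log N)^{r-1}$-type growth. For the inner sum over a single prime $p \in \mathcal{P}$, I would invoke Mertens-type estimates for $\mathcal{P}$: from \eqref{2.1} and partial summation, $\sum_{p \le x,\, p \in \mathcal{P}} \frac{1}{p} = \delta \log\log x + C_{\mathcal{P}} + o(1)$, so that
\[
\sum_{N^\eta \le p \le N^{1/(2(r-1))}} \frac{1}{p} = (1+o(1))\,\delta \log\frac{\log(N^{1/(2(r-1))})}{\log(N^\eta)} = (1+o(1))\,\delta \log\frac{1}{2(r-1)\eta}.
\]
Combining the three displays yields $\mathcal{T}_N \ge (1+o(1)) \frac{\delta N}{\log N} \cdot \frac{1}{(r-1)!} \cdot (\delta \log\frac{1}{2(r-1)\eta})^{r-1} = (1+o(1))\frac{\delta^r N}{(\log N)(r-1)!}(\log\frac{1}{2(r-1)\eta})^{r-1}$, which is exactly \eqref{sn}. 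The condition $0 < \eta < \frac{1}{2(r-1)}$ is precisely what makes $\log\frac{1}{2(r-1)\eta}$ positive, so the bound is non-vacuous.

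The main obstacle is making the Mertens-type asymptotic for $\mathcal{P}$ rigorous and uniform enough to feed into the argument — in particular, justifying $\sum_{p \le x,\, p \in \mathcal{P}} \frac{1}{p} = \delta \log\log x + O(1)$ purely from the density hypothesis \eqref{2.1} via Abel summation, and checking that the $o(1)$ errors there propagate correctly through the $(r-1)$-th power and through the replacement of the true simplex $\{p_1 \cdots p_{r-1} < N^{1/2}\}$ by the product region $\{p_i \le N^{1/(2(r-1))}\}$. One should verify that enlarging to the product region only loses a constant factor in the argument of the logarithm (not in the exponent), which is why the bound comes out with the sharp constant $\log\frac{1}{2(r-1)\eta}$ rather than something weaker; the Siegel–Walfisz and level-of-distribution hypotheses are not needed for this particular lemma, only the density \eqref{2.1}.
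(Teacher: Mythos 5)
Your proposal follows essentially the same route as the paper: apply \eqref{2.1} to each $X_{N/q}$, bound $\log(N/q)\le\log N$, shrink the simplex to the product region $p_i\in[N^\eta,N^{1/(2(r-1))}]$, factor out $\frac{1}{(r-1)!}$ times the $(r-1)$-st power of $\sum 1/p$ after discarding repeated-prime tuples, and evaluate the single-prime sum from the density hypothesis (the paper does this last step by splitting into multiplicative intervals of ratio $1+1/\log N$ rather than by partial summation, but that is a cosmetic difference). Two small corrections to your write-up: first, the main term $\bigl(\sum_{N^\eta\le p\le N^{1/(2(r-1))}}1/p\bigr)^{r-1}$ is a \emph{constant} of size $\bigl(\delta\log\frac{1}{2(r-1)\eta}\bigr)^{r-1}$, not of order $(\log\log N)^{r-1}$, so a diagonal error of size $O((\log\log N)^{r-2})$ would \emph{not} be negligible; the correct observation is that any repeated prime forces a factor $\sum_{p\ge N^\eta}p^{-2}\ll N^{-\eta}$, so the diagonal contribution is $\ll N^{-\eta}$ times a bounded quantity, which is what makes it harmless (this matches the paper's bound on $E_{\textup{rep}}$). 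Second, \eqref{2.1} alone does not yield $\sum_{p\le x,\,p\in\mathcal P}1/p=\delta\log\log x+C_{\mathcal P}+o(1)$ with a genuine constant $C_{\mathcal P}$ (that would require a summable error in \eqref{2.1}); what partial summation does give, and all you need, is $\sum_{x^a\le p\le x^b,\,p\in\mathcal P}1/p=(1+o(1))\,\delta\log(b/a)$, since the relative error in $\pi_{\mathcal P}(t)$ is uniformly small for all $t\ge x^a$ and the comparison integral $\int_{x^a}^{x^b}\frac{dt}{t\log t}=\log(b/a)$ is bounded.
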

\begin{proof}
We have \[ \mathcal{T}_N \ge (1+o(1))\sum_{\substack{q = p_1\ldots p_{r-1} \\ N^\eta \le p_i \le N^{\frac{1}{2(r-1)}} \\ p_1 < \ldots < p_{r-1}}} \frac{\delta N}{\log(N/q)} \frac{1}{q}, \] where the $1+o(1)$ factor comes from \ref{2.1}. Simplifying the above expression, we have that $ \mathcal{T}_N$ is at least \begin{align*} & (1+o(1)) \frac{\delta N}{\log N} \sum_{\substack{q = p_1\ldots p_{r-1} \\ N^\eta \le p_i \le N^\frac{1}{2(r-1)} \\ p_1 < \ldots < p_{r-1}}} \frac{1}{q} = (1+o(1))\frac{\delta N}{(\log N) (r-1)!} \left(\sum_{\substack{N^\eta \le p \le N^\frac{1}{2(r-1)} \\ p \in \mathcal{P}}} \frac{1}{p} \right)^{r-1} + E_{\textup{rep}}, \end{align*} where $E_{\textup{rep}}$ denotes the contribution from repeated primes. This contribution is \[  \ll \frac{\delta N}{\log N} \left(\sum_{N^\eta \le p \le N^\frac{1}{2(r-1)}} \frac{1}{p^2} \right) \left( \sum_{N^\eta \le p \le N^\frac{1}{2(r-1)}} \frac{1}{p} \right)^{\min(0,r-3)} \ll \frac{N (\log N)^r}{N^\eta}, \] which is negligible.

Let $\eps = \frac{1}{\log N}$, and split the interval $[N^\eta, N^\frac{1}{2(r-1)}]$ into intervals $[(1+\eps)^jN^\eta, (1+\eps)^{j+1} N^\eta]$ for all \[ 0 \le j \le \left(\log\left(\frac{N^\frac{1}{2(r-1)}}{N^\eta}\right) / \log (1 + \eps)-1\right). \] By \ref{2.1}, we have \begin{align*} \sum_{\substack{(1+\eps)^jN^\eta \le p \le (1+\eps)^{j+1}N^\eta \\ p \in \mathcal{P}}} \frac{1}{p} &\ge (1+o(1)) \frac{\delta((1+\eps)^{j+1} N^\eta - (1+\eps)^j N^\eta)}{(1+\eps)^{j+1} N^\eta\log((1+\eps)^{j+1} N^\eta)} \\ &= (1+o(1))\frac{\delta\eps}{(1+\eps)\log((1+\eps)^{j+1}N^\eta)} = (1+o(1))\frac{\delta\eps}{\log((1+\eps)^{j+1}N^\eta)}. \end{align*}
Finally, a direct computation shows that \[ \sum_{0 \le j \le \log\left(N^\frac{1}{2(r-1)}/N^\eta\right) / \log (1 + \eps) -1} \frac{\delta\eps}{\log((1+\eps)^{j+1}N^\eta)} = (1+o(1))\delta\log\left(\frac{1}{2(r-1)\eta}\right). \]
Overall, we have \[ \mathcal{T}_N \ge (1+o(1))\frac{\delta^r N}{(\log N)(r-1)!}\left(\log \frac{1}{2(r-1)\eta} \right)^{r-1}. \]
\end{proof}

We assume from now on that $k \ge e^r$, and choose $A = \frac{\log k}{r}$. Using the notation of \eqref{notationof}, one can check that $1 - \frac{T}{k} - \sigma > 0$, so it follows  by Lemma \ref{calcvar}  that 
\begin{equation}\label{final1}
\frac{\sum_{m=1}^k J_k^{(m)}(F)}{I_k(F)} \ge A\left(1 - \frac{Ae^A}{k(1 - A/(e^A-1) - e^A/k)^2} \right) \ge \frac{\log k}{r} - 1,
\end{equation}
since  $k \ge e^r$. Also, since $\eta = \frac{T\theta}{2k}$, we have \begin{equation}\label{final2}\log \frac{1}{2(r-1)\eta} \ge \log \frac{\frac{1}{r} (\log k) k^\frac{r-1}{r}}{(r-1) \theta} \ge \log \frac{k^\frac{r-1}{r}}{(r-1)\theta}, 
\end{equation}
where we have again used the fact that $k \ge e^r$. 

We have shown in Proposition~\ref{prop} that in order to complete the proof of Theorem \ref{thm:main}, it suffices to show that for some $k$, we have 
\[ \frac{\theta \phi(B)\delta^r \sum_{m=1}^k J^{(m)}_k(F)}{2B (r-1)! \cdot I_k(F)} \left(\log \frac{1}{2(r-1)\eta} \right)^{r-1} - \rho > 0. \]
By \eqref{final1} and \eqref{final2}, the above inequality follows from 
\begin{equation}\label{final3}  \frac{\log k/e^r}{r}  \left(\log \frac{k^\frac{r-1}{r}}{(r-1)\theta} \right)^{r-1} > \frac{2B\rho(r-1)!}{\phi(B) \delta^r \theta}. \end{equation} 
 Since $e^r \ge \left(\left(r-1\right)\theta\right)^\frac{r}{r-1}$ for $r \ge 2$ and $\theta \le 1$, it is easy to check that \eqref{final3} holds for 
\begin{equation}\label{final4} k >  \exp\left(r+\frac{r}{\delta}\left(\frac{2B\rho(r-1)!}{\phi(B)\theta(r-1)^{r-1}}\right)^\frac{1}{r}\right). \end{equation}
By our discussion in Subsection~\ref{outline}, this is sufficient to prove Theorem~\ref{thm:main}.

\section{An Example}\label{sec:example}

In this section, we compute a bound for $C(m)$ such that there are infinitely many $m$-tuples of $E_2$ numbers $d_1 \le \cdots \le d_m$ such that the following hold:
\begin{enumerate}
\item The class group $\Cl\left(\QQ(\sqrt{-d_i})\right)$ has an element of order $4$ for all $1 \le i \le m$.
\item  $d_m-d_1< C(m)$.
\end{enumerate}

We take $\mathcal{P} = \{p \in \mathbb{P}: p \equiv 1 \pmod 8\}$. Then, the set $\mathcal{P}$ has density $\delta = \frac{1}{4}$, satisfies a Siegel-Walfisz condition with $B = 2$, and has a level of distribution $\theta$ for any $\theta < \frac{1}{2}$. By \eqref{final4}, we have that \eqref{final3} holds for $\rho = m-1$ and $k > \exp(2 + 16\sqrt{2(m-1)})$. Setting $k = \lceil \exp(2 + 16\sqrt{2(m-1)}) \rceil$, we can choose $C(m)$ to be the diameter of any admissible set of size $k$ by Theorem \ref{thm:main}. We choose the admissible set to consist of the elements $n + h_1, n+h_2, \dots, n+h_k$, where $h_i$ is the $i$-th smallest prime greater than $k$. Using the bounds provided in \cite{dusart} as well as $k > \exp(18)$, we have that there are at most $\frac{k}{\log k} + \frac{k}{(\log k)^2}$ primes less than $k$, and that the $\lfloor k + \frac{k}{\l	og k} + \frac{k}{(\log k)^2} \rfloor$-th prime is at most
\[
k\left(1 + \frac{1}{\log k} + \frac{1}{(\log k)^2}\right)(\log k + \log\log k - 0.9061)
\]
Thus, we may choose
\[
C(m) = k\left(1 + \frac{1}{\log k} + \frac{1}{(\log k)^2}\right)(\log k + \log\log k - 0.9061) - k
\]
where $k = \lceil \exp(2 + 16\sqrt{2(m-1)}) \rceil$.

\section*{Acknowledgments}

\noindent This research was supervised by Ken Ono at the Emory University Mathematics REU and was supported by the National Science Foundation (grant number DMS-1557960). We would like to thank Ken Ono and Jesse Thorner for offering their advice and guidance and for providing many helpful discussions and valuable suggestions on the paper. We would also like to thank the anonymous referees for their helpful comments and suggestions.

\bibliographystyle{amsxport}
\bibliography{main.bbl}

\end{document}